\theoremstyle{plain}
\newtheorem{thm}{Theorem}[section]
\newtheorem{prop}[thm]{Proposition}
\newtheorem{lemma}[thm]{Lemma}
\newtheorem{sublemma}[thm]{Sublemma}
\newtheorem{claim}[thm]{Claim}
\newtheorem{cor}[thm]{Corollary}
\newtheorem{conj}[thm]{Conjecture}
\theoremstyle{definition}
\newenvironment{rmk}
{\pushQED{\qed}\renewcommand{\qedsymbol}{$\triangle$}\rmkx}
{\popQED\endrmkx}
 \renewcommand\qedsymbol{$\blacksquare$}
\newcommand{\R}{\mathbb{R}}
\newcommand{\E}{\mathbb{E}}
\newcommand{\M}{\mathcal{M}}
\renewcommand{\H}{\mathbb{H}}
\renewcommand{\P}{\mathbb{P}}
\newcommand{\var}{\operatorname{Var}}
\newcommand{\cov}{\operatorname{Cov}}
\newcommand{\comp}{\operatorname{\# comp}}
\newcommand{\core}{\operatorname{Core}}
\newcommand{\sys}{\operatorname{systole}}
\newcounter{bencomments}
\newcounter{jenyacomments}
\newcounter{Qcomments}
\newcounter{Xcomments}
\newcounter{Ycomments}
\newcounter{Zcomments}
\title{Counting geodesics on expander surfaces}  
\author{Benjamin Dozier \thanks{Department of Mathematics, Cornell University, \href{mailto: benjamin.dozier@cornell.edu}{\nolinkurl{benjamin.dozier@cornell.edu}}} \and  Jenya Sapir \thanks{Department of Mathematics, Binghamton University, \href{mailto:sapir@math.binghamton.edu}{\nolinkurl{sapir@math.binghamton.edu}}.} }
\begin{document}
\maketitle

\begin{abstract}
  We study properties of typical closed geodesics on expander surfaces of high genus, i.e. closed hyperbolic surfaces with a uniform spectral gap of the Laplacian.  Under an additional systole lower bound assumption, we show almost every geodesic of length much greater than $\sqrt{g}\log g$ is non-simple.
  And we prove almost every closed geodesic of length much greater than $g (\log g)^2$ is filling, i.e. each component of the complement of the geodesic is a topological disc.
  Our results apply to Weil-Petersson random surfaces, random covers of a fixed surface, and Brooks-Makover random surfaces, since these models are known to have uniform spectral gap asymptotically almost surely.
  
  Our proof technique involves adapting Margulis' counting strategy to work at low length scales.  
\end{abstract}

\setcounter{tocdepth}{1}  %
\tableofcontents
\setcounter{tocdepth}{3} %

\section{Introduction}
\label{sec:intro}

Let $X$ be a hyperbolic surface (that is connected, closed, and orientable; we will assume these three properties throughout).  It is easy to see that the shortest closed geodesic on $X$ is always \emph{simple}, i.e. does not self-intersect.  The number of simple geodesics less than a given length grows polynomially \cite{rivin2001, mirzakhani2008}, while the total number of closed geodesics grows exponentially (by work of Delsarte, Huber, and Selberg; see \cite{buser2010} for references).  Thus non-simple closed geodesics must eventually become predominant.  At what length scale does the transition occur?

We refer to this as the ``birthday problem'' for geodesics, by analogy with the basic probability question about the number of uniform, independent samples with replacement from a collection of $n$ objects needed before some object is picked multiple times.  The answer will depend on particular geometric features of the surface.  In this paper, we address this question for expander surfaces.  We also study the question of the length scale at which almost all closed geodesics are \emph{filling}, i.e. each component of the complement of the geodesic %
is a topological disc.  

The Laplace operator on the hyperbolic surface $X$ has a discrete spectrum and always has a simple eigenvalue of $0$.  The \emph{spectral gap} is the distance to the next smallest eigenvalue.  For $\delta>0$ we say that $X$ is a $\delta$-\emph{expander surface} if its spectral gap is greater than $\delta$.  This terminology is motivated by an analogous and much studied concept for graphs.  Families of $\delta$-expander surfaces exhibit many interesting properties such as fast mixing of geodesic flow and lower bound on Cheeger constant.  Random constructions typically give expander families. 

We denote by $N(X,L)$, respectively $N_{simp}(X,L)$, the number of oriented closed geodesics, respectively oriented simple closed geodesics, on $X$ of length at most $L$.  The \emph{systole} of a hyperbolic surface is the length of the shortest closed geodesic.

\begin{thm}
  \label{thm:simplicity}
  Let $\delta, s_0, \epsilon>0$.  There exists a constant $c=c(\delta,s_0,\epsilon)$ such that for any $\delta$-expander surface $X$ of genus $g$ with systole at least $s_0$, and any $L> c\sqrt{g} \log g$,
  \begin{align*}
    N_{simp}(X,L) \le  \epsilon \cdot N(X,L). 
  \end{align*}
\end{thm}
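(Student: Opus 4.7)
The plan is to make the ``birthday problem'' heuristic quantitative. A closed geodesic of length $L$ on $X$ (area $\sim g$) sampled by the Margulis measure has on the order of $L^2/g$ self-intersections in expectation, so the proportion of simple geodesics should decay like $\exp(-cL^2/g)$. The threshold $L = c\sqrt{g}\log g$ in the statement is exactly what is needed to make $L^2/g$ dominate $\log g$, which absorbs the polynomial losses coming from any effective counting procedure.

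\smallskip
\noindent\textbf{Step 1 (lower bound on $N(X,L)$).} From the $\delta$-spectral gap I would extract a uniform exponential mixing rate for the geodesic flow on $T^1 X$. Combined with the systole bound $s_0$, which supplies flow boxes of uniform size, this is the input to Margulis' closing-orbit scheme, producing $N(X,L) \ge c_1(\delta, s_0)\, e^L/L$ for $L$ above a genus-independent threshold. The advertised innovation is to run this scheme at length scales far shorter than those where the classical Margulis argument applies; what makes this possible is that both the mixing rate and the flow-box size are controlled uniformly in $g$ under the hypotheses.

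\smallskip
\noindent\textbf{Step 2 and main obstacle.} A naive Mirzakhani-style upper bound $N_{\mathrm{simp}}(X,L) \lesssim L^{6g-6}$ is too weak at $L = \sqrt{g}\log g$ (it only delivers the threshold $g\log g$), so I would instead lower-bound $N_{\mathrm{nonsimp}}$ directly by running a variant of Step 1 that builds in a forced self-intersection. Fix a short geodesic arc $a \subset X$, and for each pair of flow boxes at the two endpoints of $a$ use mixing to count orbits of length $\approx L - \ell(a)$ from one to the other; closing each orbit up through $a$ produces a closed geodesic that crosses itself near $a$. Summing over $a$ and correcting for the overcounting coming from the typical number $\sim L^2/g$ of self-intersections per geodesic should yield $N_{\mathrm{nonsimp}}(X,L) \ge (1-\epsilon)\, N(X,L)$, giving the theorem. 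The hard part will be the overcounting correction: it requires an upper-tail bound on the distribution of self-intersection counts, not merely its expectation, together with an effective mixing estimate at length scales as small as $\sqrt{g}\log g$---well below the usual equidistribution time for the geodesic flow on a genus $g$ surface. The $\delta$-expander hypothesis is what enables mixing to be effective at such short scales, while the systole bound is what controls the straightening of piecewise-geodesic figure-eights to honest closed geodesics.
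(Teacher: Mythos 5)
Your overall heuristic is correct and matches the paper: the expected number of self-intersections scales like $L^2/g$, and the extra $\log g$ is there to absorb the error terms that come from running an effective, discretized argument. Your Step 1 also matches the paper exactly: \Cref{thm:effect-pgt} proves $N(X,L)=(1\pm\epsilon)e^L/L$ for $L>c\log g$ via effective mixing (\Cref{lem:effect-mix}, derived from Ratner's estimates and the spectral gap) and Margulis' closing-orbit scheme, with the systole bound used to guarantee uniformly sized embedded flow boxes.

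The substantive divergence---and the gap---is in Step 2. You propose to \emph{lower bound} $N_{\mathrm{nonsimp}}$ by counting pairs (geodesic, forced self-intersection near a short arc $a$), summing over $a$, and then ``correcting for overcounting.'' That correction is not a cleanup step; it is the entire difficulty. What a first-moment count of such pairs produces is $\sum_\gamma i(\gamma) \sim N(X,L)\cdot L^2/g$, but to conclude that most $\gamma$ satisfy $i(\gamma)\ge 1$ you need an upper tail bound showing the intersection count is not carried by a small family of very non-simple geodesics. Such a tail bound is essentially equivalent to the statement you are trying to prove, and nothing in your sketch supplies it. You flag this yourself as ``the hard part,'' but the proposal does not indicate a mechanism for closing this loop, so as written the argument is circular at the decisive step.

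The paper sidesteps this by working on the other side: it \emph{upper bounds} $N_{\mathrm{simp}}$ directly, via a measure-theoretic birthday argument inside a fixed flow box $B$. It constructs $2g-2$ disjoint pairs of transverse flow boxes $(B_i,\hat B_i)$ (\Cref{prop:flow-boxes}), with the property that any geodesic hitting both members of a pair has a transverse self-intersection---this transversality mechanism, which you do not address explicitly, is essential, since merely returning close to a previous point does not force an intersection. It then samples the flow at $k\approx L/\log g\gtrsim \sqrt g$ well-separated times and splits the ``avoids self-intersection'' set into three pieces: too few distinct boxes hit among the early samples ($R_0$, controlled by a Chebyshev/second-moment bound from multiple mixing, \Cref{lem:R0}), too many collisions among the early hits ($R_1$, controlled by a Markov/first-moment bound, \Cref{lem:R1}), and the main event $Q_k$ where enough distinct $B_\nu$'s are hit but the later samples nevertheless avoid all the transverse partners $\hat B_\nu$, whose measure decays like $(1-\Theta(k/g))^{\alpha k}\approx e^{-ck^2/g}$ (\Cref{lem:Qjstruc}). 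This is the discrete birthday computation of \Cref{prop:prob-problem} transported to the flow via effective multiple mixing (\Cref{thm:effect-mult-mix}), and it gives a measure bound that is then converted to a count of simple closed geodesics via the closing lemma and an averaging over base boxes $B$. If you want to salvage your direct lower-bound route, you would need, at minimum, a genus-uniform variance bound on the self-intersection count of Margulis-random closed geodesics at scale $L=\sqrt g\log g$; that is a new and nontrivial input, not a corollary of effective mixing alone.
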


\begin{conj}
  In the above, one can replace the condition $L > c\sqrt{g} \log g$ by $L>c \sqrt{g}$.  
\end{conj}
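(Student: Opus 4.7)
The conjecture captures the natural ``birthday'' threshold: for a closed geodesic of length $L$ on a surface of area $\asymp g$, the expected number of self-intersections scales like $L^2/g$, so the transition from simple-dominant to non-simple-dominant should happen at $L \asymp \sqrt{g}$. The plan is to strengthen the counting argument behind Theorem~\ref{thm:simplicity} into a genuine Poisson-type limit theorem for self-intersection numbers of typical closed geodesics, which would eliminate the spurious $\log g$ factor.

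The argument would proceed in three stages. First, one writes the geometric self-intersection number $i(\gamma,\gamma)$ of a closed geodesic $\gamma$ of length $L$ as an integral over $[0,L]^2$ counting near-returns of $\gamma$ to itself at transverse angles, after excising the diagonal and a uniform neighborhood of it to rule out trivial self-contact. Second, averaging over the closed geodesics of length in $[L, L+1]$ and combining Margulis-style equidistribution with the uniform exponential mixing of the geodesic flow supplied by the spectral gap, one computes
\begin{align*}
  \E[\, i(\gamma,\gamma)\,] \;\asymp\; \frac{L^2}{\vol(X)} \;\asymp\; \frac{L^2}{g}.
\end{align*}
For $L = c\sqrt{g}$ with $c$ a large constant depending on $\delta, s_0, \epsilon$, this expectation is a constant of size $\asymp c^2$. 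Third, one shows that $i(\gamma,\gamma)$ is asymptotically Poisson-distributed with some parameter $\lambda \asymp c^2$ as $g\to\infty$; then the fraction of closed geodesics of length about $L$ that are simple is approximately $e^{-\lambda}$, which can be driven below $\epsilon$ by choosing $c$ sufficiently large.

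The main obstacle is establishing this Poisson limit. Because the expected number of self-intersections at length $\sqrt{g}$ is only a constant, not growing with $g$, a second-moment or Chebyshev bound is insufficient to rule out most geodesics being simple: one must instead match every factorial moment $\E[\,i(\gamma,\gamma)(i(\gamma,\gamma)-1)\cdots(i(\gamma,\gamma)-k+1)\,]$ against its Poisson value. Each such moment expands into a $2k$-fold integral counting $k$-tuples of simultaneous near-self-intersections of $\gamma$, and controlling these requires effective multiple-mixing estimates for the geodesic flow with error terms depending only on $\delta$ (and, if available, tempered only polynomially in $k$). The systole lower bound $s_0$ should be essential for ruling out degenerate configurations supported in thin parts of the surface, and one must also subtract off contributions where pairs of near-returns reflect passage along a common short closed geodesic rather than genuine transverse self-intersections. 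If carried out, this Poisson limit would replace the discretization-plus-union-bound step that appears responsible for the extra $\log g$ in Theorem~\ref{thm:simplicity}, yielding the sharp threshold $L > c\sqrt{g}$.
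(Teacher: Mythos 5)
The statement you were asked to prove is a \emph{Conjecture} in the paper, not a theorem: the authors explicitly leave it open (noting that the analogous statement has been proved only for Weil--Petersson random surfaces, by Wu--Xue via the Selberg trace formula). So there is no proof in the paper to compare your proposal against; I can only assess the proposal on its own merits.

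Your heuristic is the right one, and indeed it is exactly the "birthday" heuristic the paper itself uses to motivate the $\sqrt{g}$ threshold. You are also right that when $L\asymp\sqrt{g}$ the expected self-intersection count is only $O(1)$, so a Chebyshev/second-moment argument cannot separate simple from non-simple, and a Poisson-type limit via factorial moments is the natural remedy. But you have not proved anything: the entire content of the conjecture is concentrated in exactly the moment estimate you leave as "the main obstacle," and the sketch does not address the reason that step is hard.

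The concrete gap is the same $\log g$ barrier responsible for the spurious factor in \Cref{thm:simplicity}. Effective mixing on a $\delta$-expander (\Cref{lem:effect-mix}) carries an error term $\sim g\cdot t e^{-\kappa t}$ (the factor $g$ coming from $\mu(B)^{-1}$), and hence supplies no information at time separations below a constant times $\log g$. The $k$-th factorial moment of $i(\gamma,\gamma)$ expands into a $2k$-fold time integral over $[0,L]^{2k}$; the piece where all $2k$ times are pairwise $\gtrsim c\log g$ apart can plausibly be handled by effective multiple mixing (\Cref{thm:effect-mult-mix}), giving the Poisson main term $\lambda^k \asymp (L^2/g)^k$. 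But the remaining pieces, where some cluster of the $2k$ times fall within $c\log g$ of one another, are exactly where the mixing estimate gives nothing and where the combinatorics of near-coincident self-intersections become delicate. Already at $k=2$, the configuration $t_1\approx t_2$, $t_1'\approx t_2'$, $t_2-t_1\approx t_2'-t_1'$ (representing essentially a single crossing sampled twice, or two crossings along a short near-loop) has time-volume $\asymp L^2\log g$ and return-probability $\asymp 1/g$, so it contributes $\asymp L^2\log g/g$, which at $L\asymp\sqrt{g}$ \emph{dominates} the desired main term $L^4/g^2\asymp 1$ unless it is carefully identified and removed. You mention "subtracting off contributions" but supply no mechanism, and removing the correct contributions uniformly in $g$, at the critical scale $L\asymp\sqrt{g}$, and for all fixed $k$, is precisely what remains open. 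Without either (a) mixing/correlation control below the $\log g$ scale, or (b) a combinatorial argument that the clustered configurations are always higher-order return events whose total contribution is $o(\lambda^k)$, the method of moments does not close. As written, the proposal restates the conjecture's difficulty rather than resolving it.
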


Wu and Xue have recently proved the analogous conjecture for the specific case of Weil-Petersson random surfaces \cite[Theorem 4, part (2)]{wx2022}.

It is also conceivable that the theorem (and conjecture) hold for \emph{all} surfaces $X$ (with no assumption about spectral gap or systole).  

\paragraph{Asymptotic notation.} In this paper, we use Hardy's notation $A \prec B$ to mean $A=o(B)$ as the independent variable (typically the genus $g$) goes to  $\infty$, i.e. $A/B \to 0$.    

We also use the standard notation $f(x)=O(g(x))$, which means there exists some positive constant $C$, independent of $x$ (but potentially depending on some other parameters, which will be specified), such that $|f(x)| \le C |g(x)|$ for all $x$; the functions are allowed to take negative values.  And we define $f(x)=\Omega(g(x))$ to mean $g(x)=O(f(x))$ \emph{and} $f(x)\ge 0$ for all $x$. %

\begin{rmk}
  \label{rmk:simple-nonopt}
  For the regime $L\prec \sqrt{g}$, %
  whether $N_{simp}(X,L)$ is dominant depends on more aspects of the geometry of the surface, beyond spectral gap and systole lower bound.

  On the one hand, in this regime Weil-Petersson random surfaces will have $N_{simp}(X,L)>(1-\epsilon) N(X,L)$ asymptotically almost surely \cite[Theorem 4 (1)]{wx2022}.

  On the other hand, surfaces obtained by gluing fixed hyperbolic pairs of pants (say with all cuffs of length $2$) according to a random regular graph, with any twists, asymptotically almost surely form an expander family with lower bound on systole.  This follows by combining (i) a comparison of the Cheeger constant for the surface to that of the graph \cite[Section 4.1]{buser1978}, and (ii) the well-known lower bound on Cheeger constant for random regular graphs.
  For these surfaces, we anticipate that  $N_{simp}(X,L)\prec N(X,L)$ whenever $L\to \infty$ with $g$ (so including many cases in which $L\prec \sqrt{g}$), since every time a geodesic enters a pair of pants it has a definite chance of picking up a self-intersection before leaving.  
\end{rmk}

Let $N_{fill}(X,L)$ denote the number of filling closed geodesics on $X$ of length at most $L$.  

\begin{thm}
  \label{thm:filling}
  Let $\delta, s_0, \epsilon>0$.  There exists a constant $c=c(\delta,s_0,\epsilon)$ such that for any $\delta$-expander surface $X$ of genus $g$ with systole at least $s_0$ and any $L>c \cdot g  (\log g)^2$,
  \begin{align*}
    N_{fill}(X,L) \ge (1-\epsilon) N(X,L).  
  \end{align*}
\end{thm}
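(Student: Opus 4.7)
The plan is to reduce ``non-filling'' to a concrete geometric condition and then use a uniform counting estimate to bound how many closed geodesics of length $L$ can exhibit that behavior.

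\textbf{Topological reduction and net argument.} A closed geodesic $\gamma$ is non-filling iff some component of $X\setminus\gamma$ is not a disc, iff there exists an essential simple closed geodesic $\alpha$ with $\gamma\cap\alpha=\emptyset$. The collar lemma and the systole bound $\ell(\alpha)\geq s_0$ give an embedded collar $C_\alpha$ around each such $\alpha$ of uniform width $w_0 = w_0(s_0) > 0$. Fix $r_0 := w_0/4$ and choose a maximal $r_0$-separated net $\{p_1,\ldots,p_N\}\subset X$, so that $N \asymp g$ by area comparison. For any $\alpha$, the net point $p_i$ closest to a selected point of $\alpha$ lies within $r_0$ of $\alpha$, forcing $B(p_i, r_0)\subset C_\alpha$. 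Every non-filling closed geodesic must therefore avoid at least one of the balls $B(p_i, r_0)$, giving
\[
N(X,L) - N_{fill}(X,L) \;\leq\; \sum_{i=1}^N N_i(X, L),
\]
where $N_i(X, L)$ counts closed geodesics of length $\leq L$ disjoint from $B(p_i, r_0)$.

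\textbf{Quantitative avoidance estimate.} The technical heart of the proof is a bound of the form
\[
N_i(X, L) \;\leq\; C(\delta, s_0)\,\exp\!\bigl(-\kappa L/(g \log g)\bigr)\,N(X, L), \qquad \kappa = \kappa(\delta, s_0) > 0.
\]
I would prove this by adapting Margulis' closed-geodesic counting strategy (as advertised in the abstract) to the low-length regime: realize closed orbits of length $\approx L$ via the periodic-orbit measure on the unit tangent bundle $T^1 X$ and compare to the Liouville measure using exponential mixing of the geodesic flow, whose decay rate is controlled by the spectral gap $\delta$; then deduce a large-deviation-type bound on orbits avoiding the lifted set $\pi^{-1}(B(p_i, r_0))\subset T^1 X$. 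Heuristically, this set has relative Liouville measure $\asymp 1/g$, while the effective mixing time needed to resolve it is $\asymp \log g$; a geodesic of length $L$ then has $\asymp L/\log g$ effectively independent chances to enter the ball, each with probability $\asymp 1/g$, giving the stated exponent. Combining with the union bound over the $N \asymp g$ balls,
\[
\frac{N(X,L) - N_{fill}(X, L)}{N(X, L)} \;\leq\; C' g \exp\!\bigl(-\kappa L/(g \log g)\bigr),
\]
which is $<\epsilon$ as soon as $L \geq c(\delta, s_0, \epsilon)\, g (\log g)^2$, matching the theorem.

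\textbf{Main obstacle.} The delicate step is producing the quantitative counting estimate for $N_i(X, L)$ uniformly in $g$, in the avoided ball, and across $\delta$-expander surfaces. Classical Margulis counting gives sharp asymptotics only as $L\to\infty$ for a fixed surface, whereas here the surface varies with $g$ and $L$ is only just above the diameter scale $\log g$. Controlling the approximation of the periodic-orbit measure by the Liouville measure at this marginal length scale — so that the exponential mixing decay of rate depending only on $\delta$ actually translates into a uniform counting bound — is precisely where the paper's extension of the Margulis technique to low length scales is essential. The interplay between the net size $\asymp g$ and the mixing time $\asymp \log g$ is what produces the $(\log g)^2$ factor (rather than a single $\log g$) in the threshold length.
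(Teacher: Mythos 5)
The quantitative skeleton of your argument — probe with $\asymp g$ small sets of measure $\asymp 1/g$, run a large-deviations estimate using effective mixing at time scale $\asymp \log g$, union bound over the $\asymp g$ sets, and conclude for $L \gtrsim g(\log g)^2$ — matches the paper's strategy. But the topological reduction that sets this up has a genuine gap, in two places. First, the collar lemma gives collar half-width $w(\ell)=\operatorname{arcsinh}(1/\sinh(\ell/2))$, which is \emph{decreasing} in $\ell$; the systole bound $\ell(\alpha)\ge s_0$ gives only an \emph{upper} bound $w(\ell)\le w(s_0)$, and the avoided simple closed geodesic $\alpha$ can be arbitrarily long, so its collar can be arbitrarily thin. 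There is no uniform $w_0(s_0)>0$. Second, and more fundamentally, even with a wide collar the implication ``$\gamma\cap\alpha=\emptyset$ forces $\gamma\cap B(p_i,r_0)=\emptyset$'' is false. Your $p_i$ is within distance $r_0$ of $\alpha$, so $B(p_i,r_0)$ actually contains a point of $\alpha$ and straddles it; a geodesic disjoint from $\alpha$ can run arbitrarily close to $\alpha$ on one side and pass through the ball without crossing $\alpha$. So non-filling geodesics need not avoid any of your net balls, and the union bound has nothing to bound.

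The paper sidesteps both problems by working contrapositively with a construction whose bounded geometry is built in. It builds a Delaunay triangulation $\mathcal D$ of $X$ from a maximal $r$-separated net (so all edges have length in $[2r,6r]$, independent of $g$) and places an $\eta$ flow box in $T^1X$ at the midpoint of each of the $\asymp g$ edges, pointing along the edge. Crucially these are flow boxes in the unit tangent bundle, not balls downstairs: the direction constraint means a geodesic hitting the box actually travels along that edge, so a geodesic hitting \emph{all} the boxes traces a subgraph homeomorphic to $\mathcal D$ and is therefore filling (Lemma~\ref{lem:filling-net}). The avoidance estimate is then carried out for flow boxes (Lemmas~\ref{lem:avoid-box}, \ref{lem:meas-avoid-box}, \ref{lem:mix-complement}), which is also where your plan needs an adjustment: the effective (multiple) mixing machinery the paper develops in \Cref{sec:effect-mix} is specifically for flow boxes, not for $\pi^{-1}(B(p_i,r_0))$, so the large-deviations step should be run against flow boxes as targets rather than preimages of metric balls.
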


\begin{rmk}
  It is conceivable that the $L>c \cdot g (\log g)^2$ condition can be weakened to $L>c\cdot g \log g$, though some new methods would be necessary.  Our technique relies on sampling the geodesic at times that are at least $c \log g$ apart, in order to ensure independence.  But we believe one should be able to argue with less independence.    

  We do not anticipate that the bound can be made smaller than $c\cdot g \log g$.  We now sketch a reason for this.  Consider the surfaces glued from fixed size pants described \Cref{rmk:simple-nonopt}.  Any filling closed geodesic must intersect every pair of pants, and for the decomposition into fixed size pants, we anticipate that this event is governed by the classical ``coupon collector problem.''  This is the problem of determining how many independent, uniform draws (with replacement) from a collection of $n$ different objects are needed before it is highly likely that every object has been drawn at least once.  The transition from low to high probability occurs around $c \cdot  n\log n$ draws.  This also matches the solution to the analogous ``cover time'' problem for random regular graphs \cite{bk1989, cf2005}.

  However, we anticipate surfaces sampled from the three random models we discuss below to behave differently.  In particular we do not anticipate that such surfaces have a decomposition into pants of bounded size.  For these models, it is conceivable that the result above might hold for $L>c\cdot g$ (as suggested in \cite[Question p.5]{wx2022} for the Weil-Petersson model).  
\end{rmk}

\subsection{Applications to random surfaces}
\label{sec:rand}

We now give applications of \Cref{thm:simplicity} to several different models of random surfaces.  
 There is also an analogous story for random regular graphs \cite{ds2022}.  

\subsubsection{Weil-Petersson random surfaces}
\label{sec:wp}

Our original inspiration for this project was \cite[Conjecture 2]{lw2021}, which concerns the birthday problem for Weil-Petersson random surfaces.  While we were writing up our results, this conjecture was resolved in a very precise manner in \cite{wx2022}.  Our methods give a very different proof of part of that result.  We require a length lower bound that is larger than the optimal one by a factor of $\log g$.  On the other hand, our techniques allow us to study other random models as well, described below.  

Let $\P^{WP}_g[\cdot]$ denote the probability of some event with respect to surfaces drawn from the Weil-Petersson measure on $\M_g$, the moduli space of genus $g$ hyperbolic surfaces.

\begin{cor}[Weil-Petersson surfaces]
  \label{cor:wp-simplicity}
  Fix $\epsilon>0$, and let $L$ be some function of genus $g$.  
  \begin{enumerate}[(i)]
  \item \label{item:wp-simp} (Weaker version of \cite{wx2022}, Theorem 4) If $L\succ \sqrt g \log g$, then 
  \begin{align*}
    \lim_{g\to\infty} \P^{WP}_g[N_{simp}(X,L) < \epsilon \cdot N(X,L)] = 1. 
  \end{align*}
  
\item \label{item:wp-filling} If $L \succ g\cdot (\log g)^2$, then
  \begin{align*}
    \lim_{g\to\infty} \P^{WP}_g[N_{fill}(X,L) \ge (1-\epsilon) \cdot N(X,L)] = 1.
  \end{align*}
\end{enumerate}
\end{cor}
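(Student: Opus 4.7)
The plan is to reduce this corollary to the deterministic Theorems~\ref{thm:simplicity} and~\ref{thm:filling} by invoking two known ``concentration'' properties of the Weil-Petersson measure: a uniform lower bound on the systole, and a uniform spectral gap, both holding asymptotically almost surely. Concretely, I would first collect constants $\delta_0, s_0 > 0$ such that
\begin{align*}
  \lim_{g \to \infty} \P^{WP}_g\bigl[\text{$X$ is $\delta_0$-expander and } \sys(X) \ge s_0\bigr] \;=\; 1.
\end{align*}
The systole lower bound is provided by Mirzakhani--Petri's result on the Poisson limit of short geodesic lengths (which implies that the probability of systole being less than $s_0$ can be made arbitrarily small by choosing $s_0$ small). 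The spectral gap input comes from the recent resolutions of the spectral gap conjecture for Weil-Petersson random surfaces, due to Wu--Xue, Lipnowski--Wright, and Anantharaman--Monk, any of which supplies a uniform $\delta_0 > 0$ for the a.a.s.\ spectral gap.

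Given $\epsilon > 0$, I would then feed $(\delta_0, s_0, \epsilon)$ into Theorem~\ref{thm:simplicity} to obtain a constant $c = c(\delta_0, s_0, \epsilon)$. The hypothesis $L \succ \sqrt{g} \log g$ means that for all sufficiently large $g$, we have $L > c \sqrt{g} \log g$. On the ``good'' event $G_g$ above, Theorem~\ref{thm:simplicity} then applies pointwise on $X$ to give $N_{simp}(X,L) \le \epsilon \cdot N(X,L)$. Since $\P^{WP}_g[G_g] \to 1$, this proves part (\ref{item:wp-simp}). Part (\ref{item:wp-filling}) is handled identically, using Theorem~\ref{thm:filling} in place of Theorem~\ref{thm:simplicity} and the stronger hypothesis $L \succ g (\log g)^2$ to absorb the corresponding constant.

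There is essentially no serious mathematical obstacle beyond bookkeeping: the main theorems already do the heavy lifting, and what remains is to cite the appropriate probabilistic statements and to verify that the length hypotheses in the corollary dominate the deterministic thresholds once the constants $\delta_0, s_0$ are fixed. The only minor subtlety is that the constant $c$ produced by Theorems~\ref{thm:simplicity} and~\ref{thm:filling} depends on $\delta_0, s_0, \epsilon$, so one must choose those parameters \emph{before} taking $g \to \infty$; this is why the asymptotic notation $L \succ \sqrt{g} \log g$ (rather than a single fixed multiple) is precisely what is needed to swallow the unspecified constant $c$ uniformly in $g$.
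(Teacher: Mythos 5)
Your proposal follows essentially the same route as the paper: apply the two known a.a.s. properties (systole lower bound and uniform spectral gap) of Weil--Petersson random surfaces, then invoke \Cref{thm:simplicity} and \Cref{thm:filling} pointwise on the good event. The deduction is correct in spirit, but there is one small quantifier issue in the way you set things up. You assert the existence of a fixed $s_0>0$ such that $\lim_{g\to\infty}\P^{WP}_g[\sys(X)\ge s_0]=1$. This is false: by Mirzakhani--Petri the number of primitive geodesics of length $< s_0$ converges to a nonzero Poisson random variable, so $\P^{WP}_g[\sys(X)\ge s_0]$ converges to a limit strictly less than $1$ for every fixed $s_0>0$ (though, as you note, that limit tends to $1$ as $s_0\to 0$). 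Consequently your displayed ``good event'' $G_g$ does not have probability tending to $1$, and the conclusion that $\P^{WP}_g[N_{simp}<\epsilon N]\to 1$ does not follow as stated. The fix is exactly what the paper does: fix an auxiliary $\eta>0$ first, choose $s_0=s_0(\eta)$ so that $\liminf_g \P^{WP}_g[\sys(X)>s_0]>1-\eta$, run the argument to get $\liminf_g \P^{WP}_g[N_{simp}<\epsilon N]\ge 1-\eta$, and then send $\eta\to 0$. (The spectral-gap input is fine as you wrote it: there the probability genuinely tends to $1$ for a fixed $\delta_0$.) With this reordering of quantifiers your argument is complete and agrees with the paper's.
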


\begin{proof}
  Fix $\eta>0$.  By  \cite[Theorem 4.2]{mirzakhani2013}, we can find $s_0>0$ such that
  \begin{align}
    \lim_{g\to\infty} \P_g^{WP}[\sys(X)>s_0] >1-\eta. \label{eq:sys}
  \end{align}

  Also by \cite[Theorem 4.8]{mirzakhani2013}, there exists a $\delta>0$ such that
  \begin{align}
    \lim_{g\to\infty} \P_g^{WP}[X \text{ is } \delta\text{-expander}] =1. \label{eq:prob-wp}
  \end{align}

  Now for this $\delta, s_0, \epsilon$, we apply \Cref{thm:simplicity}.  For the constant $c$ from this theorem, we have, by \eqref{eq:sys} and \eqref{eq:prob-wp}, for all $g$ sufficiently large, and any $L'>c \sqrt{g} \log g$:  
  \begin{align*}
    \P^{WP}_g[N_{simp}(X,L') < \epsilon \cdot N(X,L')] >1-\eta.
  \end{align*}
  In particular, for our $L\succ \sqrt g \log g$ we have
  \begin{align*}
    \lim_{g\to\infty} \P^{WP}_g[N_{simp}(X,L) < \epsilon \cdot N(X,L)] >1-\eta.
  \end{align*}
  Since this holds for any $\eta>0$, we get \eqref{item:wp-simp}.

  The proof of \eqref{item:wp-filling} follows the same pattern, using \Cref{thm:filling}.  
\end{proof}

\subsubsection{Random covers}
\label{sec:covers}

Let $Y$ be a fixed closed hyperbolic surface.  The \emph{random cover model} of random hyperbolic surfaces gives a finitely-supported probability measure on $\M_g$ for each $g$ such that the Euler characteristic $2-2g$ is a a multiple of $\chi(Y)$; it is simply counting measure on the set of all genus $g$ Riemannian covers of $Y$.  

Let $\P^Y_g[\cdot]$ denote the probability of some event with respect to surfaces in $\mathcal M_g$ drawn from this random cover measure. 

\begin{cor}[Random covers]
  \label{cor:covers-simplicity}
  Fix $\epsilon>0$, and let $L$ be a function of genus $g$. 
  \begin{enumerate}[(i)]
  \item  If $L\succ \sqrt g \log g$, then 
  \begin{align*}
    \lim_{g\to\infty} \P^Y_g[N_{simp}(X,L) < \epsilon \cdot N(X,L)] = 1.
  \end{align*}

\item  If $L\succ g \cdot (\log g)^2$, then 
  \begin{align*}
    \lim_{g\to\infty} \P^Y_g[N_{fill}(X,L) \ge (1- \epsilon) \cdot N(X,L)] = 1.
  \end{align*}
    \end{enumerate}
\end{cor}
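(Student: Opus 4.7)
The plan is to mirror the proof of \Cref{cor:wp-simplicity} nearly verbatim, reducing each of the two statements to an application of \Cref{thm:simplicity} or \Cref{thm:filling} on a high-probability event where both a uniform systole lower bound and a uniform spectral gap hold.

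The systole bound is in fact deterministic here, which is a genuine simplification over the Weil--Petersson case. For any Riemannian cover $p \colon X \to Y$ of the fixed surface $Y$, the image of a closed geodesic on $X$ is a closed geodesic on $Y$ (possibly traversed with multiplicity), so $\sys(X) \ge \sys(Y)$. I would therefore set $s_0 := \sys(Y) > 0$, which is valid for \emph{every} surface in the support of $\P^Y_g$, with no probabilistic loss and no auxiliary $\eta$-parameter to manage as in \eqref{eq:sys}.

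The spectral gap ingredient must be imported from the literature on the Laplace spectrum of random covers of a fixed hyperbolic surface, in the form: there exists $\delta = \delta(Y) > 0$ such that $\lim_{g \to \infty} \P^Y_g[X \text{ is } \delta\text{-expander}] = 1$, as asserted in the abstract of this paper. Locating and citing the correct reference is really the only nontrivial step in the proof and is the main (soft) obstacle. Given $s_0, \delta$, and $\epsilon$, \Cref{thm:simplicity} produces a constant $c = c(\delta, s_0, \epsilon)$, and the hypothesis $L \succ \sqrt{g}\log g$ forces $L > c\sqrt{g}\log g$ for all large $g$; on the $\delta$-expander event we then have $N_{simp}(X,L) < \epsilon \cdot N(X,L)$ deterministically, and this event has $\P^Y_g$-probability tending to $1$. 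Substituting \Cref{thm:filling} for \Cref{thm:simplicity} and $g(\log g)^2$ for $\sqrt{g}\log g$ yields the filling statement by the identical argument.
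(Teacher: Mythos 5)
Your proposal is correct and matches the paper's argument essentially verbatim, including the key simplification that $\sys(X)\ge\sys(Y)$ holds deterministically for every cover so one can take $s_0=\sys(Y)$. The spectral-gap input you flag as the one missing ingredient is supplied in the paper by \cite[Theorem 1.5]{mnp2022} (Magee--Naud--Puder), which gives uniform $\delta$-expansion asymptotically almost surely for any $\delta<\min\{\lambda_1(Y),3/16\}$; with that citation inserted your argument is complete.
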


\begin{proof}
  The structure of the proof is the same as proof of \Cref{cor:wp-simplicity}.  

Control of the systole for random covers is easy.   For \emph{any} cover $X$ of $Y$, we have $\sys(X)\ge \sys(Y)$, since any closed geodesic $\gamma$ on $X$ projects to a closed geodesic on $Y$ with length at most $\ell_X(\gamma)$.

  To control spectral gap, we appeal to \cite[Theorem 1.5]{mnp2022}, which gives that there exists $\delta>0$ (depending on $Y$) such that 
  \begin{align}
    \lim_{g\to\infty} \P_g^{Y}[X \text{ is } \delta\text{-expander}] =1. \label{eq:prob-cover}
    \end{align}
    (The $\delta$ can be taken to be any real less than $\min\{\lambda_1(Y),3/16\}$.)
    
    The rest of the proof is identical to  proof of \Cref{cor:wp-simplicity}, using \Cref{thm:simplicity} and \Cref{thm:filling}.  
  \end{proof}

  \subsubsection{Brooks-Makover (Belyi) random surfaces}
  \label{sec:bm}

  Yet another model of random hyperbolic surfaces was introduced in \cite{bm2004}. 

Gluing together ideal hyperbolic triangles (``midpoint to midpoint'') according to a trivalent ribbon graph yields a cusped hyperbolic surface.  Such a surface can be compactified by considering the corresponding punctured Riemann surface, filling in the puncture, and then taking the uniformizing hyperbolic metric in the conformal class of this closed Riemann surface.

Fix an integer $2n$ and choose the trivalent ribbon graph uniformly at random from the (finite) collection of such on $2n$ vertices.  The resulting closed surface is a \emph{Brooks-Makover random surface}, and we get a finitely-supported probability measure on the set of hyperbolic surfaces.  The genus of the surfaces in the support is not determined by $n$ (though much is known about the distribution of genus; see \cite[Corollary 5.1]{gamburd2006}). We denote by $\P_n^{BM}[\cdot]$ the probability of some event with respect to surfaces drawn from this measure.

\begin{cor}[Brooks-Makover surfaces]
  \label{cor:bm-simplicity}
  Fix $\epsilon>0$, and let $L$ be a function of $n$ (half the number of triangles).
  \begin{enumerate}[(i)]
  \item \label{item:bm-simp} If  $L\succ \sqrt n \log n$, then 
  \begin{align*}
    \lim_{g\to\infty} \P^{BM}_n[N_{simp}(X,L) < \epsilon \cdot N(X,L)] = 1.
  \end{align*}

    \item \label{item:bm-filling} If  $L\succ  n (\log n)^2$, then 
  \begin{align*}
    \lim_{g\to\infty} \P^{BM}_n[N_{fill}(X,L) \ge (1- \epsilon) \cdot N(X,L)] = 1.
  \end{align*}

\end{enumerate}
\end{cor}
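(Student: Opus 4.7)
The plan is to mirror the template used for Corollaries~\ref{cor:wp-simplicity} and~\ref{cor:covers-simplicity}. Three ingredients are needed: (i) a uniform spectral gap a.a.s., (ii) a uniform lower bound on systole a.a.s., and (iii) a dictionary translating hypotheses stated in terms of $n$ into hypotheses in terms of $g$, since for Brooks-Makover surfaces the genus is itself random. Once these are in hand, the conclusion follows from \Cref{thm:simplicity} and \Cref{thm:filling} exactly as in the proof of \Cref{cor:wp-simplicity}, by applying the theorems with the parameters $\delta, s_0, \epsilon$, intersecting the three high-probability events, and letting the failure probability $\eta \to 0$.

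For the spectral gap, I would invoke the main result of Brooks and Makover \cite{bm2004}, which combines a Cheeger-type estimate on random trivalent ribbon graphs with a quasi-conformal comparison between the ideal-triangle metric on the noncompact surface and the uniformized metric on its compactification, producing a $\delta>0$ with $\P^{BM}_n[X \text{ is } \delta\text{-expander}] \to 1$. For the systole lower bound, the same quasi-conformal comparison shows that a short essential loop in the uniformized metric pulls back to a short essential loop in the ideal-triangle metric, which must cross only a bounded number of edges; a standard first-moment count on random trivalent graphs (of Bollob\'as type) then shows that such short essential loops are absent a.a.s., yielding $s_0>0$ and $\P^{BM}_n[\sys(X)>s_0] > 1-\eta$ for all $n$ large. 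To reconcile $n$ and $g$, I would apply \cite[Corollary 5.1]{gamburd2006}: the genus is concentrated near $(n+1)/2$, so with probability tending to $1$ we have, say, $g \in [n/3, n]$. Consequently $L \succ \sqrt n \log n$ implies $L > c\sqrt{g}\log g$ a.a.s., and $L \succ n (\log n)^2$ implies $L > c g (\log g)^2$ a.a.s., which are precisely the hypotheses of \Cref{thm:simplicity} and \Cref{thm:filling}.

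I expect the systole bound to be the main obstacle. In the random-cover setting the uniform bound $\sys(X) \ge \sys(Y)$ is automatic, and in the Weil-Petersson setting Mirzakhani's direct area computation handles it, but for Brooks-Makover surfaces one must verify that the comparison between the ideal-triangulated noncompact metric and the uniformized closed metric is controlled \emph{uniformly} in $n$ with high probability, and then quantify how this quasi-conformal distortion can worsen the combinatorial systole bound. Provided the distortion does not degrade the ideal-triangle systole by more than a bounded factor on a set of probability tending to $1$, the argument goes through and the corollary follows.
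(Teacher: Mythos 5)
Your outline is correct and would lead to the result, but it takes a more circuitous route than necessary in two places.

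First, the $n$-to-$g$ dictionary. You reach for Gamburd's concentration result for the genus (\cite[Corollary 5.1]{gamburd2006}), which gives $g \approx n/2$ only with high probability and therefore introduces one more event to intersect. But the corollary only needs $L$ to be \emph{large} relative to $g$, so a one-sided, \emph{deterministic} upper bound on $g$ suffices. The paper uses the Euler characteristic: with $e = 3n$ edges, $f = 2n$ triangular faces, and $v \ge 1$ vertex, one gets $2 - 2g = v - e + f \ge 1 - n$, i.e. $n - 1 \ge 2g - 2$, for \emph{every} surface in the support of $\P^{BM}_n$. Then $L \succ \sqrt{n}\log n$ immediately implies $L \succ \sqrt{g}\log g$, with no probabilistic bookkeeping. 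Your approach works (and the two-sided concentration is true), but it is strictly heavier machinery; a reader might also wonder whether you have correctly managed the extra high-probability event.

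Second, the systole. You correctly flag this as the delicate ingredient and sketch how one might prove it (quasi-conformal comparison between the cusped ideal-triangle metric and the compactified uniformized metric, plus a first-moment count of short essential loops in the random graph). That sketch is essentially what Brooks and Makover carried out, and it is available off the shelf: \cite[Theorem 2.2(a)]{bm2004} gives $\P^{BM}_n[\sys(X) > s_0] \to 1$ for a universal $s_0 > 0$, and part (c) of the same theorem gives the spectral gap. The paper simply cites this theorem for both ingredients. So there is no gap in your argument, just a re-derivation of a known result; replacing your systole sketch with the citation would make the proof match the length and simplicity of the other two corollaries.
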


\begin{proof}
  Although the genus of the surfaces in the support of $\P^{BM}_n$ is not deterministic, it will be enough for our purposes to use a simple linear upper bound:
  $$n-1 \ge 2g-2 .$$
  This is easily proved via the Euler characteristic formula with $e=3f/2 = 3n$ and $v\ge 1$, where $v,e,f$ are the number of vertices, edges, faces, respectively, of the triangulation.

  Combining this inequality with our assumption that $L \succ \sqrt{n} \log n$, we then get that our function $L$ satisfies
  $$L\succ \sqrt{g} \log g,$$
  for $g$ the genus of any surface in the support of $\P_n^{BM}$.

  By \cite[Theorem 2.2 (a), (c)]{bm2004}, there exist constants $\delta>0$ and $s_0>0$ such that
  \begin{align*}
     \lim_{n\to \infty} \P_n^{BM}[\sys(X)>s_0] =1,  \\ 
    \lim_{n\to\infty} \P_n^{BM}[X \text{ is } \delta\text{-expander}] =1.
  \end{align*}

  Item \eqref{item:bm-simp} then follows by applying \Cref{thm:simplicity}, as for the previous two random models. Item \eqref{item:bm-filling} is proved similarly, using \Cref{thm:filling}.  

\end{proof}

\subsection{Relation to prior work}   The issue of the relative frequency of simple geodesics compared to all geodesics arises when studying the spectral gap, in particular for random surfaces (see \cite{lw2021, wx2021, am2023}).  More broadly, this paper fits into a line of work on the ``shape of a random hyperbolic surface of high genus,'' pioneered by Brooks and Makover \cite{bm2004} for surfaces glued from triangles, and by Mirzakhani \cite{mirzakhani2013} for the Weil-Petersson model.  For behavior of geodesics in this context, see for example \cite{gpy2011,mp2019,mt2022, nwx2023}.   A recent major triumph in this area is the use of a random construction to prove the existence of family of closed hyperbolic surfaces of growing genus and spectral gap approaching $1/4$ \cite{mw2021} \footnote{Since the first version of our paper was released, there has been a great deal of further  progress on spectral gaps of random surfaces, including \cite{am25, mpv25, sw25, hmt25}.}.  
Our main theorem is not in the random setting, but involves conditions that common models of random surfaces satisfy, so our results apply to these, as discussed above. 

\subsection{Discussion and outline of proof}
\label{sec:plan}

The key to our proofs is transferring probabilistic arguments for the ``birthday'' and ``coupon-collector'' problems into the hyperbolic geometry setting using techniques of Margulis for counting closed geodesics.  Our methods are very flexible and should be applicable to other counting problems.  We develop a toolbox for translating results that hold for walks on regular graphs to the surface context.

A crucial ingredient in Margulis' approach is \emph{mixing} of the geodesic flow; in our setting we need \emph{effective mixing}, which follows from the spectral gap assumption.  We also show that effective mixing in fact implies effective \emph{multiple} mixing, using the expansion/contraction properties of hyperbolic geodesic flow.  Multiple mixing can be thought of as a notion of independence (it corresponds to the Markovian property of random walks on graphs).  For these notions, it is very important that we have uniform control over all the constants involved, in a genus independent way (in particular, we need more than just the exponential rate of decay of error terms).  

A significant difference between the graph and surface contexts is that a geodesic returning close to where it has been before is not enough to guarantee a self-intersection (there are arbitrarily long simple closed geodesics on a fixed surface; these must come back very close to previously visited places, but the different strands near such a place are nearly parallel).  So instead we work with a more restrictive property, namely that the geodesic comes back near where it has been and at definite angle bounded away from zero.  This does guarantee a self-intersection.

There are various technical complications that arise because we must discretize our surface in order to leverage the analogy with graphs.  Furthermore, we must do this discretization in a ``uniform'' way across different surfaces with genus going to infinity.

\paragraph{Outline of proof.}
\begin{itemize}
\item In \Cref{sec:effect-mix}, we develop some tools for counting closed geodesics using flow boxes.  We prove effective mixing for flow boxes of the geodesic flow on expander surfaces, using a theorem of Ratner.  We also state and prove an effective Anosov Closing Lemma, and various other results that will be useful for counting.  

\item In \Cref{sec:effect-prime-geod}, we prove an effective prime geodesic theorem, \Cref{thm:effect-pgt}, for expander surfaces.  We follow the strategy of Margulis, using the effective mixing result developed in the previous section.

\item In \Cref{sec:effect-mult-mix}, we prove effective \textit{multiple} mixing
using geometric properties of flow box intersections.  

\item In \Cref{sec:simp-conj}, we prove the required upper bound on the number of simple geodesics, \Cref{prop:num-simple}, and then combine this with our effective prime geodesic theorem to prove \Cref{thm:simplicity}.

\begin{itemize}
 \item In \Cref{sec:prob-problem}, we demonstrate the ideas in the proof of \Cref{prop:num-simple} by first proving an analogous discrete probability result.
 \item In Sections \ref{sec:setup} - \ref{sec:simple-geod-B}, we work towards bounding the number of simple closed geodesics of length roughly $t$ that pass through some flow box $B$ (\Cref{lem:no-inter-return}).
 \item In \Cref{sec:setup}, we find a collection of $2g-2$ pairs of flow boxes with the property that if a geodesic passes through both flow boxes in a pair, it is forced to self-intersect. 
 \item In \Cref{sec:bound-S}, we control the set of directions that do not pass through any pair of these flow boxes. We do this by breaking up such directions further into sets $R$ that avoid too many of our flow boxes, and $Q_k$ that often pass through one flow box of a pair, but not both. We control these separately in \Cref{sec:R} and \Cref{sec:Q}.
  \item In \Cref{sec:simple-geod-B} we use the above to prove \Cref{prop:eps-error}, a bound on the number of times that simple closed curves of length roughly $t$ pass through a given flow box $B$. %
    
  \item In \Cref{sec:proof}, we average the previous count over all possible flow boxes $B$ to bound the number of simple closed geodesics of length at most $L$ in \Cref{prop:num-simple}.  
\end{itemize}

\item In \Cref{sec:filling}, we prove \Cref{thm:filling} on filling geodesics.  We first construct a controlled set of flow boxes with the property that any closed geodesic that intersects all of them must be filling.  We then show that most sufficiently long closed geodesics intersect all of these flow boxes.  
\end{itemize}

\subsection{Acknowledgements}
\label{sec:acknowledgements}
We thank Mike Lipnowski, Katie Mann, Bram Petri, and Alex Wright for helpful conversations and comments.   And we thank Michael Magee for raising the question to us of counting closed geodesics on random cover surfaces.  We also thank the anonymous referees for many helpful suggestions that have improved the exposition.

\section{General results on flow boxes and counting}
\label{sec:effect-mix}
In this section, we work with \textit{flow boxes} $B$, which are small sets of almost-parallel vectors in $T^1X$ (defined in \Cref{sec:notation-setup}).  We relate dynamical properties of these to counting closed geodesics. 

Components of $B\cap g_{-t}B$ correspond to homotopy classes of (approximately) closed geodesics that start and end in $B$, of (approximate) length $t$.   In \Cref{sec:effect-mixing-flow}, we state an effective mixing result (coming from a theorem of Ratner) that allows us to estimate the measure of this set.  In \Cref{sec:inter-flow-box}, we then prove some properties about the geometry of components. Next in \Cref{sec:count-box-surf} and \Cref{sec:bound_given_type}, we show how to turn ``local" estimates (i.e. on geodesics that start and end in a particular box $B$) into global ones. Lastly in \Cref{sec:Anosov_closing}, we state two versions of the classical Anosov Closing Lemma; roughly these say that each component $B\cap g_{-t}B$ corresponds to exactly one closed geodesic.

\subsection{Notation and setup}
\label{sec:notation-setup}

We let $T^1X$ be the unit tangent bundle to $X$.  There is a natural measure $\mu$ on $T^1X$, the Liouville (or Haar) measure.  We normalize $\mu$ to be a probability measure, i.e. $\mu(T^1X)=1$.

\paragraph{Matrices for geodesic and horocylic flows.}

Let 
\begin{align*}
  &g_t = \left(   \begin{array}{cc}
                   e^{-t/2} & 0\\
                   0 & e^{t/2} 
                 \end{array} \right),
                       \quad 
                       r_\theta = \left(   \begin{array}{cc}
                          \cos \theta  & -\sin\theta\\
                           \sin\theta & \cos\theta
          \end{array} \right),  \\             
  & h^s_r = \left(   \begin{array}{cc}
                     1 & r\\
                      0 & 1
    \end{array} \right),
          \quad 
          h^u_r = \left(   \begin{array}{cc}
                           1 & 0\\
                           r & 1
                           \end{array} \right).
\end{align*}
We identify $PSL_2(\R)$ with $T^1\H$ via the map that takes a matrix $A$ to the image $Av_0$, where $v_0$ is the upwards pointing unit tangent vector at $i\in \H$, under (the derivative of) the M\"obius action.  Under this identification, $g_t, h^s_r, h^u_r$ generate the geodesic, stable (contracting) horocycle, and unstable (expanding) horocycle flows, respectively, via multiplication by the inverse on the \emph{right}, e.g. $g_t(Av_0) = Ag^{-1}_tv_0$.  These flows preserve the measure $\mu$.  

\paragraph{Flow boxes $B(v)$.}

Given three parameters $\vec \eta:=(\eta_1,\eta_2,\eta_3)$ and $v\in T^1X$, we define  the \emph{flow box} 
\begin{align*}
    B(v):=\{ h_{r_1}^ug_t h_{r_2}^sv : |r_1|<\eta_1/2, |t|<\eta_2/2 ,|r_2|<\eta_3/2\}.  
\end{align*}
That is, we build a flow box by first flowing $v$ in the contracting direction, then in the geodesic flow direction, and lastly in the expanding direction. %

We will refer to this as the $\vec \eta$ flow box centered at $v$.  By $\eta$ flow box, we will mean an $(\eta, \eta, \eta)$ flow box.

We say a $(\eta_1,\eta_2,\eta_3)$ flow box is \emph{embedded} if the map
$(r_1,t,r_2) \mapsto h_{r_1}^ug_t h_{r_2}^sv$ is an injection on the domain $|r_1|<\eta_1/2,|t|<\eta_2/2,|r_2|<\eta_3/2$.

For $\vec \eta$ small, the coordinates $r_1,t,r_2$ on a flow box behave almost exactly like standard coordinates on a Euclidean rectangular box.  

For technical purposes, given $B=B(v)$ an $\vec \eta$ flow box, and $\kappa>0$, we also define $B^{+\kappa}$ (respectively, $B^{-\kappa}$) to be the $(1+\kappa)\vec \eta$ (respectively, $(1-\kappa)\vec\eta$) flow box centered at $v$.  In many cases we will not need very precise estimates, and it will be convenient to work with flow boxes scaled by factors of $3$ or $1/3$.  To ease notation, we write $B^+:=B^{+(2)}$ and $B^-:=B^{-(2/3)}$  for these flow boxes.  We will also use this notation iteratively -- for instance $B^{++}$ is the $9\vec \eta$ flow box centered at $v$.

\paragraph{Foliation by geodesics.}
Because flow boxes are not exactly products, it is not true that each point lies on a segment of length $\eta$ in the geodesic flow or contracting directions. However, this only becomes a problem near the edges, and for most applications, it is enough to view flow boxes as products.

First, we consider geodesic segments. Let $B$ be an $\eta$ flow box. Each $v \in B$ lies on some geodesic segment inside $B$. Most of these segments have length $\eta$. We quantify this statement here:
\begin{lemma}
\label{lem:full-geod-seg}
    Let $X$ be a hyperbolic surface, and let $\eta >0$ be small enough so that any $\eta$ flow box is embedded in $X$. Let $B$ be an $\eta$ flow box. Let $B^*$ be the flow box with the same center as $B$, of width $e^{-\eta}\eta$ %
    in the expanding direction, and width $\eta$ in the geodesic flow and contracting directions. Then for all $v \in B^*$, $v$ lies on a length $\eta$ geodesic segment in $B$.
\end{lemma}
\begin{proof}
    To build $B$, we start with a vector $v_0$, and flow it first in the contracting horocycle direction, then the geodesic flow direction. This results in a ``rectangle" foliated by length $\eta$ geodesic segments. Let $\alpha$ be such a segment. In particular, for the initial vector $v$ on $\alpha$,    
    \[
    \alpha = \{g_t v \ | \ t \in [0,\eta]\}
    \]    
    For each $t \in [0,\eta]$, we flow $g_t v$ by $\eta/2$ in both directions in the expanding horocycle direction. 

    Take a vector $w = h^u_r(v)$ on the expanding horocycle leaf of the initial endpoint $v$ of $\alpha$. We wish to find a condition for which $w$ is on a length $\eta$ geodesic leaf of $B$. Flowing $w$ for time $t \leq \eta$ we see
    \begin{align*}
     g_t(w) & = g_t(h^u_r(v))\\
      & = h^u_{e^t r}(g_t v) \\
      & = h^u_{e^t r} (w_t)
    \end{align*}
   where $w_t = g_t v$ is another vector on $\alpha$. Now, $h^u_{e^t r} w_t$ is in $B$ only if $e^t r < \eta/2$. %
   This condition is most restrictive when $t = \eta$. In that case, we see 
   \[
   r \leq e^{-\eta}\cdot \eta/2
   \]
    
    In other words,  $w$ is on a length $\eta$ geodesic leaf in $B$ if and only if that leaf intersects $B^*$, the flow box about $v_0$ with expanding width $e^{-\eta} \eta$ and contracting and geodesic flow width $\eta$. 
\end{proof}

We also quantify when points lie on ``large enough" contracting horocycle segments.
\begin{lemma}
\label{lem:large-unstab-seg}

  Let $\eta, \kappa > 0$,  with $\kappa<1$. Let $X$ be a hyperbolic surface so that any $\eta$ flow box in $T^1X$ is embedded.  Then there is some $c_0 = c_0(\eta, \kappa)>0$ so that the following holds. Let $v\in T^1X$ and let $B(v)$ be the $\eta$-flow box centered at $v$.  Then each $w \in B^{-\kappa}(v)$ lies on a contracting  horocycle segment of width at least $c_0\eta$ inside $B(v)$. 
\end{lemma}
\begin{proof}
    This follows from the fact that each flow box is open, so each point lies on a contracting horocycle segment of non-zero length. Moreover, the closure of a flow box is compact, and the function sending each point to the length of its contracting horocycle segment is continuous on the closure. See discussion in \cite[Section 8.7]{fh2019} for a more precise statement.  
\end{proof}

\paragraph{Width of flow boxes.} 
Let $S\subset T^1X$, and $B$ an embedded flow box.  We say $S$ has \emph{full width in the contracting direction} (relative to $B$) if for all $v\in S$, the connected component of $\{h_r^sv : r \in \R\} \cap B$ that contains $v$ is a subset of $S$.  Similarly, we define analogous notions for the expanding (respectively flow) directions by replacing $h^s$ with $h^u$ (respectively $g_t)$ 

We say that $S$ has \emph{width $\xi$ in the expanding direction} if every  $v\in S$ is on a length $\xi$ expanding horocycle segment in $S$.%

To define subsets of non-full width in the contracting and geodesic flow directions, we must take into account the fact that flow boxes are not perfectly foliated by contracting horocycle and geodesic flow segments.

We say $S$ has width $\xi < \eta$ in the geodesic flow direction if every point in $S \cap B_\xi$ lies on a length $\xi$ geodesic segment in $S$, where $B_\xi$ is the subset of $B$ foliated by geodesic segments of length at least $\xi$. We define the width of $S$ in the contracting horocycle direction analogously.

By \Cref{lem:full-geod-seg} and \Cref{lem:large-unstab-seg}, for any flow box $B$, there is a small $\kappa > 0$ so that all points inside a slightly smaller flow box $B^{-\kappa}$ lie on ``large enough" geodesic flow and expanding horocycle segments. So this definition means that all but a small measure of the points in $S$ lie on geodesic flow or expanding horocycle segments of length $\xi$.

We note that if a subset $S \subset B$ has width $c_1 \eta \times c_2 \eta \times c_3 \eta$ in the expanding, geodesic flow, and contracting directions, then $\mu(S) = c_1 c_2 c_3 \mu(B) (1+O(\kappa))$.

Now suppose $S$ has finitely many connected components. For each connected component $C_i$ of $S$, let $\xi_i\ge 0$ be the maximal number such that $C_i$ has width $\xi_i$ in the flow direction.  Then the \emph{average width of $S$ in the flow direction} is the average of the $\xi_i$ over all the connected components.  

\subsection{Effective mixing for flow boxes}
\label{sec:effect-mixing-flow}
The following is the central tool that we use in the paper -- it is the place where the expander assumption is used.   

  \begin{lemma}[Effective mixing for flow boxes]
    \label{lem:effect-mix} 
    There exists some function $f(\vec \eta,\epsilon)$ such that for any $\delta>0$, there exists $\kappa=\kappa(\delta)>0$ with the following property.  Let $X$ be a $\delta$-expander surface, $v,w\in T^1X$, and $\vec \eta \in \R_{>0}^3$ such that the $3\vec \eta$ flow boxes $B^+(v)$ and $B^+(w)$ are embedded.  Then for any $\epsilon>0$, 
    \begin{align}
      \mu\left(g_{-t}B(v) \cap B(w)\right) = \mu(B(v))^2 \cdot \left(1 +O(\epsilon)+ \frac{1}{\mu(B(v))}O(f(\vec \eta,\epsilon) e^{-\kappa t})\right) \label{eq:mixing}
    \end{align}
    for all $t\ge 0$, where the implicit constants in $O()$ are absolute. 
    The function $f$ can be taken to be continuous (on its natural domain, where the inputs are positive).  
  \end{lemma}
   
  \begin{proof}
    Note that $\mu(B(v))=\mu(B(w))$ for any $v,w$ unit tangent vectors on surfaces of the same genus (assuming the flow boxes are embedded).  Let $\chi^{\epsilon}_{B(v)}$ be a smooth ($C^\infty$) approximation to the indicator function $\chi_{B(v)}$. We choose these approximating functions \emph{uniformly} over the possible choices of $X$ and $v$, i.e. the restriction of the function to a small neighborhood of $B(v)$ looks the same over all such $X,v$.  Specifically, we take, for each $\vec \eta$ and $\epsilon$, an $\vec \eta$ flow box $B$ in $T^1\H$, and then define  $\chi^{\epsilon}_{B}:T^1\H \to \R$ such that 
    
    \begin{enumerate}[(i)]
        \item $0\le \chi^{\epsilon}_{B} \le 1$,  \label{item:smooth1}
        \item $\mu(\operatorname{support}(\chi^{\epsilon}_{B})) \le (1+\epsilon) \mu(B)$. \label{item:smooth2}
        \item $\operatorname{support}(\chi^{\epsilon}_{B}) \subset B^+. $
    \end{enumerate}

    Then for any $X,v$ such that the relevant boxes are embedded, note that there is an isometry between a small disc in $X$ and a small disc in $\H$, such that the induced action on unit tangent bundles takes $v$ to the center of $B$.  We then define $\chi^{\epsilon}_{B(v)}:T^1X \to \R$ on $B^+(v)$ by pulling back $\chi^{\epsilon}_{B}$ along this map; on the complement of $B^+(v)$, we take the value of $\chi^{\epsilon}_{B(v)}$ to be $0$.    

    Then let
    $$h_v := \chi^{\epsilon}_{B(v)}- \int_X \chi^{\epsilon}_{B(v)}d\mu.$$
    Note $h_v$ has mean $0$ and is smooth.  

    Now we apply \cite[Theorem 2]{matheus2013} (which is in terms of the spectrum of the Casimir operator, but, as remarked on p. 473 of that paper, the bottom part of the spectrum of Casimir and Laplace operators coincide).  This is an explicit version of \cite{ratner1987}, and gives that there exists an absolute constant $c$, and a $\kappa>0$ depending on $\delta$, such that for any $t\ge 0$,
    \begin{align}
      \langle g_{-t}h_v, h_w\rangle \le & c \bigg[ \|L_W^3h_v \| \left(\|h_w\|+ \|L_W^3h_w\| \right) +  \|L_W^3h_w\| \left(\|h_v\|+ \|L_W^3h_v\| \right)  \\
      &+  \left(\|h_v\| + \|L_W^3h_v\|\left) \right(\|h_w\| + \|L_W^3h_w\|\right)  \bigg] e^{-\kappa t},  \label{eq:matheus} 
    \end{align}
where $\|\cdot \|$ is the $(L^2,\mu)$ norm, and $L_W$ denotes the Lie derivative in the $W$ direction, where
\begin{align*}
  W=
  \left( \begin{array}{cc}
    0 & 1\\ -1 & 0
  \end{array} \right)  \in \mathfrak{sl}_2(\R).  
\end{align*}
(The result in \cite{matheus2013} gives $\kappa$ and $c$ explicitly, but we do not need that level of precision here.  The bound there has a $t e^{-\lambda t}$ term instead of our $e^{-\kappa t}$; we have absorbed the $t$ in the exponential term, at the cost of making the constant $\kappa$ worse. For a similar reason, we don't need to restrict to $t\ge 1$ as \cite{matheus2013} does.)

Now
\begin{align}
  \| L_W^3 h_v \|^2 & =  \|L_W^3 \chi^{\epsilon}_{B(v)}\|^2 \\
                    & \le \sup \left| L_W^3 \chi^{\epsilon}_{B(v)}\right|^2 \cdot \mu\left(\operatorname{support}(\chi^{\epsilon}_{B(v)})\right) \\
                    &\le \tilde f(\vec \eta,\epsilon) \cdot (1+\epsilon) \mu(B(v)),  \label{eq:lie-bound} 
\end{align}
where we take $\tilde f:=\sup \left| L_W^3 \chi^{\epsilon}_{B(v)}\right|^2 $.  This does not depend on $X$ or $v$, because of our uniform definition of the $\chi^{\epsilon}_{B(v)}$ and the fact that $L_W$ is a local differential operator.  

Now, using that $\mu$ is a probability measure, we get
\begin{align}
  \|h_v\|^2 &= \int\left(\chi^{\epsilon}_{B(v)}\right)^2d\mu - \left(\int \chi^{\epsilon}_{B(v)}d\mu\right)^2 
  \le \int\left(\chi^{\epsilon}_{B(v)}\right)^2d\mu
  \\ &\le (1+\epsilon) \mu(B(v)), \label{eq:h-bound}
\end{align}
and we also get the same bounds for $h_w$.  
Using \eqref{eq:h-bound} and \eqref{eq:lie-bound} in \eqref{eq:matheus} gives
\begin{align*}
    \langle g_{-t}h_v, h_w\rangle \le  f(\vec \eta,\epsilon) \cdot \mu(B(v))\cdot e^{-\kappa t}
\end{align*}
for some $f$ depending only on $\vec \eta,\epsilon$.  
Then, using that $h_v,h_w, g_{-t}h_v$ have mean $0$, we get 
    \begin{align}
      \langle g_{-t} \chi^{\epsilon}_{B(v)}, \chi^{\epsilon}_{B(w)} \rangle &= \left\langle g_{-t}\left(h_v + \int \chi^{\epsilon}_{B(v)}d\mu\right), h_w + \int \chi^{\epsilon}_{B(w)}d\mu \right\rangle \\
        &= \left\langle g_{-t}h_v + \int \chi^{\epsilon}_{B(v)} d\mu , h_w + \int \chi^{\epsilon}_{B(w)}d\mu \right\rangle \\
        &= \int \chi^{\epsilon}_{B(v)} d\mu \int \chi^{\epsilon}_{B(w)} d\mu  + \langle g_{-t}h_v, h_w\rangle \\
        & =  \int \chi^{\epsilon}_{B(v)} d\mu \int \chi^{\epsilon}_{B(w)} d\mu + O\left( f(\vec \eta,\epsilon) \cdot \mu(B(v))\cdot e^{-\kappa t}\right), \label{eq:smooth} 
    \end{align}
    where the implied constant in $O(\cdot)$ is absolute.

    Now to get the upper bound part of \eqref{eq:mixing}, we note that we can pick $\chi^{\epsilon}_{B}$ with the additional properties that for all $X,v$, we have  $\chi_{B(v)}\le \chi^{\epsilon}_{B(v)}$ and $\int \chi^{\epsilon}_{B(v)} d\mu \le (1+\epsilon) \mu(B(v))$.   %
    Then \eqref{eq:smooth} gives
    \begin{align*}
        \mu(g_{-t}B(v) \cap B(w)) &= \langle g_{-t} \chi_{B(v)}, \chi_{B(w)} \rangle \\
        & \le \langle g_{-t} \chi^{\epsilon}_{B(v)}, \chi^{\epsilon}_{B(w)} \rangle \\
        & = \int \chi^{\epsilon}_{B(v)} d\mu \int \chi^{\epsilon}_{B(w)} d\mu + O\left( f(\vec \eta,\epsilon) \cdot \mu(B(v))\cdot e^{-\kappa t}\right)\\
        & \le (1+\epsilon)^2 \mu(B(v)) \mu(B(w)) + O\left( f(\vec \eta,\epsilon) \cdot  \mu(B(v))\cdot e^{-\kappa t}\right) \\
        & \le\mu(B(v))^2 \left( 1 + 2\epsilon + \epsilon^2 +  \frac{1}{\mu(B(v))} O\left( f(\vec \eta,\epsilon) \cdot e^{-\kappa t}\right)\right),
    \end{align*}
which is the desired upper bound.  

For the lower bound part of \eqref{eq:mixing}, we pick $\hat \chi^{\epsilon}_{B}$ satisfying  (\ref{item:smooth1}) and (\ref{item:smooth2}) and with the additional properties that for all $X,v$, we have $\hat \chi_{B(v)}^\epsilon \le \chi_{B(v)}$ and $\int \hat \chi^{\epsilon}_{B(v)} d\mu \ge (1-\epsilon) \mu(B(v))$.  Then an analogous calculation to the one above, with inequalities flipped, yields 
    \begin{align*}
        \mu(g_{-t}B(v) \cap B(w)) &= \langle g_{-t} \chi_{B(v)}, \chi_{B(w)} \rangle \\
        & \ge \mu(B(v))^2 \left( 1 - 2\epsilon + \epsilon^2 +  \frac{1}{\mu(B(v))} O\left( f(\vec \eta,\epsilon) \cdot e^{-\kappa t}\right)\right),
    \end{align*}
which is the desired lower bound.  This completes the proof.  
  \end{proof}

The error terms in the above become manageable when $L$ grows logarithmically with genus.  We will often use the following specific instance: 

\begin{cor}
    \label{cor:effect-mix-log} 
    Given $\delta, \vec \eta_0, \epsilon > 0$, there exists $C=C(\delta, \vec \eta_0, \epsilon)$ with the following property.  Let $X$ be a $\delta$-expander surface.  Let $\vec \eta$ be such that each of its components is within a factor of $1000$ of the corresponding component of $\vec \eta_0$.  Let $v,w\in T^1X$ %
    be such that the $3\vec \eta$ flow boxes $B^+(v)$ and $B^+(w)$ are embedded.  Then for all $t\ge C \log g$, 
    \begin{align*}
      \mu\left(g_{-t}B(v) \cap B(w)\right) = \mu(B(v))^2 \cdot \left(1 +O(\epsilon)\right),
    \end{align*}
     where the implicit constant in $O()$ is less than $1$. 
\end{cor}

\begin{proof}
    We apply \Cref{lem:effect-mix}. We need to bound the error term 
    \begin{align*}
        \frac{1}{\mu(B(v))}O(f(\vec \eta,\epsilon) e^{-\kappa t})
    \end{align*}
    from above.  For fixed $\eta_0$, we have that $\mu(B(v))$ is approximately proportional to $1/g$ (recall the $\mu$ is normalized to have total mass $1$).  For $t>C \log g$, we have that $e^{-\kappa t} < g^{-C\kappa}$.  So by taking $C$, large, we can make the above error term arbitrarily small.  The result follows.  
    
\end{proof}

\subsection{Intersections of flow boxes}
\label{sec:inter-flow-box}

We wish to understand the geometry of connected components of $B \cap g_{-t}B$, where $B$ is an $\eta$ flow box, and $t$ is relatively large. Because geodesic flow contracts exponentially in the stable horocycle direction, and expands in the unstable direction, we get that connected components should have width $\eta$ in the stable direction, and $e^{-t} \eta$ in the unstable direction. The following lemma makes this precise to account for ``edge effects".
\begin{lemma}
    \label{lem:inter-box-eps}
 Let $\eta$ be such that $0<\eta<1$.  Let $\kappa$ be such that $0 < \kappa \leq 2$. Then 
there exists a $\tau_0=\tau_0(\kappa,\eta)$ such that for all $t>\tau_0$ the following holds. Let $X$ be a hyperbolic surface so that any $5 \eta$ flow box is embedded.  Then if $B_0, B_1$ are two $\eta$ flow boxes in $T^1X$,  every component of  $B_0 \cap g_{-t}B_1$ is contained in a unique component of  $B^{+\kappa}_0 \cap g_{-t}B^{+\kappa}_1$  that has full width in the contracting direction (relative to $B_0^{+\kappa}$), and width $e^{-t}\eta$ in the expanding direction.
\end{lemma}
\begin{proof}
First, as $B_0 \subset B_0^{+\kappa}$, and likewise for $B_1$, we have that $B_0 \cap g_{-t}B_1 \subset B_0^{+\kappa} \cap g_{-t}B_1^{+\kappa}$. Thus, if $x, y \in B_0 \cap g_{-t}B_1$ lie inside the same connected component in $B_0 \cap g_{-t}B_1$, then they must lie inside the same connected component of $B_0^{+\kappa} \cap g_{-t}B_1^{+\kappa}$. 

On the other hand, suppose $x,y$ lie on different connected components in $B_0 \cap g_{-t}B_1$. Let $h$ be a horocycle segment through $x$ that joins $x$ to the boundary of the connected component of $y$. Then $g_t h$ has both endpoints in $B_1$, but must leave $B_1$ and come back. Thus, $g_t h$ has length at least $s_0 - \eta$, where $s_0$ is the injectivity radius of $X$. We note that since the $5 \eta$ flow box is embedded, $g_t h$ has length at least $4 \eta$. Thus, $g_t h$ is not contained in $B_1^{+\kappa}$, as $\kappa \leq 2$, and so the width of $B_1^{+\kappa}$ in the unstable horocycle direction is at most $3\eta$. And so $x$ and $y$ cannot lie in the same connected component of $B_0^{+\kappa} \cap g_{-t}B_1^{+\kappa}$.

  Take a component $\Delta$ of $B_0^{+\kappa} \cap g_{-t}B_1^{+\kappa}$ that intersects $B_0 \cap g_{-t}B_1$. The fact that $B_0$ is open, and its closure is compact, implies that for our given $\kappa$, there is some length scale $\tau_0$ that every point in $B_0$ lies on a stable and geodesic flow segment of length $\eta$, and an unstable flow segment of length $e^{-t} \eta$ inside $B_0^{+\kappa}$. Moreover, the same is true for $B_1$. Thus, by the same arguments as in \cite[Section 8.7]{fh2019}, $\Delta$ has full width in the stable direction, and width $e^{-t} \eta$ in the geodesic flow direction.
\end{proof}

\begin{lemma}
\label{lem:ave-flow-width}
Fix $\delta, \eta, \epsilon >0$.  There exists a $C$ with the following property. Let $X$ be a $\delta$-expander surface, and let $B, B' \subset T^1X$ be embedded $\eta$ flow boxes.  Then for any $t > C \log g$, the average width of components of $B \cap g_{-t}B'$ in the flow direction is $$(1+O(\epsilon))\eta/2,$$
where the implied constant in $O(\cdot)$ is less than $1$.  
\end{lemma}

\begin{proof}
    We divide $B$ into $k=\lfloor 1/\epsilon \rfloor$ flow boxes $B_1,\ldots, B_k$ that are almost disjoint, have full width in the expanding and contracting directions, and have width $\eta/k$ in the flow direction.  Similarly divide $B'$ into $B_1',\ldots,B_k'$. By \Cref{cor:effect-mix-log} (Effective mixing), for sufficiently large $C$ (depending on $\epsilon)$, each $B_i\cap g_{-t}B_j'$ has measure equal to $(1+O(\epsilon)) \mu(B_i)^2$, when $t>C \log g$; thus, up to small error, these sets have measure independent of $i,j$.  This implies that on average the components of $B\cap g_{-t}B'$ extend half-way through $B$ in the flow direction (up to multiplicative error $1+O(\epsilon)$).  
\end{proof}

\subsection{From counting in flow boxes to total counts}
\label{sec:count-box-surf}
Let $G$ be any subset of the set of closed geodesics on $X$.  We let
\[
N_G(X,L) := \{\gamma \in G: \ell_X(\gamma) \le L\}. 
\]
More generally, for any interval $[a,b]$, we define
\[
 G[a,b] := \{ \gamma \in G \ | \ \ell_X(\gamma) \in [a,b]\},
\]
and we let
\[
N_G(X,[a,b]): = \#G[a,b],
\]
that is, this is the total number of closed geodesics in $G$ whose length falls in the interval $[a,b]$.

Given a flow box $B$, we define $N_G(B,L,\eta)$ to be the number of length $\eta$ geodesic segments in $B$ that lie on some $\gamma \in G[L-\eta,L+\eta]$. In other words, $N(B,L,\eta)$ is the number of times the geodesics in $G$ pass through $B$ in a full length $\eta$ segment. (Note that this excludes edge effects: if a geodesic in $G$ spends less than time $\eta$ in $B$, then it is not counted.)

In this section, we show how to estimate $N_G(X,[L-\eta,L+\eta])$ given an estimate on $N_G(B,L,\eta)$. In other words, we show how to globally estimate the number of geodesics of type $G$ given an estimate of the number of times such geodesics pass through any flow box.
\begin{lemma}
\label{lem:int-flows}
 Let $s_0>0$. Let $0 < \eta < \frac 12 s_0$. Suppose $X$ is a hyperbolic surface whose systole is greater than $s_0$.
 For each $v \in T^1X$, let $B(v)$ be the $\eta$-flow box centered at $v$. Then for all $L$, we have
 \[
  N_G(X,[L-\eta,L+\eta]) \cdot L \cdot \mu(B) = (1 + O(\eta)) \int_{v \in T^1X}\eta \cdot N_G(B(v),L,\eta)d\mu(v)
 \]
where $\mu(B)$ is the measure of any $\eta$-flow box, and the implicit constant in $O(\eta)$ depends on $s_0$, but is independent of $L$ and $X$.
\end{lemma}
\begin{proof}
As $s_0$ bounds the systole of $X$ from below, there are no closed geodesics of length smaller than $s_0$ on $X$. %
Thus, if $L < s_0/2$, then $N_G(X, [L-\eta, L+\eta]) = N_G(B, L, \eta) = 0$ for all $B$.  So in this case the statement of the lemma is trivially true. So we can assume throughout that $L > \frac 12 s_0$.

The flow box $B(v)$ is foliated by geodesic segments. Most segments have length $\eta$, which can be quantified in the following sense. Suppose $\alpha$ is a segment of this foliation whose length is smaller than $\eta$. Then by \Cref{lem:full-geod-seg}, $\alpha$ is disjoint from the flow box $B^*(v)$ that has width $\eta e^{-\eta}$ in the expanding direction, and width $\eta$ in both the geodesic flow and contracting directions. Thus, we note that,
\[
\eta \cdot N_G(B(v),L,\eta)\leq \sum_{\substack{\gamma \in G[L-\eta,L+\eta]}} \int_0^{\ell(\gamma)}\chi_{B(v)}(\gamma(t)) dt
\]
and
\[
\eta \cdot N_G(B(v),L,\eta)\geq \sum_{\substack{\gamma \in G[L-\eta,L+\eta]}} \int_0^{\ell(\gamma)}\chi_{B^*(v)}(\gamma(t)) dt.
\]
We now integrate these estimates for $N_G(B(v), L,\eta)$, with respect to $v \in T^1(X)$. Switching the order of integration, we see that we get a term of the form $\int_{v \in T^1X} \chi_{B(v)}(\gamma(t)) d\mu(v)$, and an analogous term with $B^*(v)$.

We can treat $\chi_B(v)(\gamma(t))$ as a function in both $v$ and $t$. We note that the set of $v$ so that $\gamma(t) \in B(v)$ is almost the same as $B(\gamma(t))$. In fact,
\[
\mu\{v \in T^1 X \ | \ \gamma(t) \in B(v)\} = (1 + O(\eta)) \mu(B)
\]
and likewise for $B^*$. Thus, after switching the order of integration, we get the follow upper bound:
\begin{align*}
 \int_{v \in T^1X} &\eta \cdot N_G(B(v),L,\eta)  d\mu(v)  \\
       & \leq \int_{v \in T^1X} \sum_{\substack{\gamma \in G[L-\eta,L+\eta]}} \left(\int_{t=0}^{\ell(\gamma)} \chi_{B(v)}(\gamma(t)) dt\right) d\mu(v)\\
       &\leq \sum_{\gamma \in G[L-\eta,L+\eta]} \int_{t=0}^{\ell(\gamma)} \left(\int _{T^1X}\chi_{\{v:\gamma(t)\in B(v)\}} d\mu(v)\right) dt\\
       & \leq  N_G(X,L-\eta, L+\eta) \cdot (L+\eta) \cdot \mu(B) (1 + O(\eta)).
\end{align*}

On the other hand, we get lower bounds instead of upper bounds, by using $B^*$ instead of $B$ in the inequalities above. Thus, 
\begin{align*}
\int_{v \in T^1X} &\eta \cdot N_G(B(v),L,\eta)  d\mu(v)  \\
&\geq  N_G(X,L-\eta, L+\eta) \cdot (L-\eta) \cdot \mu(B^*) (1 - O(\eta)).
\end{align*}
 Now we use that $B^*$ has width $\eta e^{-\eta}$ in the expanding direction. %
 Thus,
 \begin{align*}
     \mu(B^*) & = e^{-\eta} \mu(B) \\
       & = (1-O(\eta))\mu(B)
 \end{align*}
 and so
 \begin{align*}
 (1 - O(\eta)) \mu(B^*) & =  (1-O(\eta))^2\mu(B) \\
    & =  (1-O(\eta))\mu(B). 
 \end{align*}
So we can replace $B^*$ with $B$ in the above inequality.

Recall $s_0$ is the lower bound on the systole of $X$, as we can assume that $L > \frac 12 s_0$. Then we have
\begin{align*}
    L + \eta & \leq L + \frac 12 s_0 (2\eta/s_0)\\
     & \leq L + L(2\eta/s_0) \\
     & = L(1 + O(\eta)),
\end{align*}
 where the constant implicit in $O(\eta)$ depends only on $s_0$. Moreover, as $\eta < s_0/2$, we have $(1 + O(\eta))(1+O(\eta)) = 1+O(\eta)$ where again the implicit constant depends only on $s_0$. Thus, $(L+\eta)(1 + O(\eta)) = L(1 + O(\eta))$. Likewise, $(L-\eta)(1-O(\eta)) = L(1-O(\eta)$. Thus, for all $L > s_0$, we have:
\[
 \int_{v \in T^1X} \eta \cdot N_G(B(v),L,\eta)  d\mu(v)  = N_G(X,L-\eta, L+\eta) \cdot L  \cdot \mu(B) (1 + O(\eta))
.\]
Note that the constant in the $O(\eta)$ error term depends only on $s_0$.%

\end{proof}

\subsection{Bounding number of geodesics of given type}
\label{sec:bound_given_type}

In this section, we show how bounds on $N_G(B,L,\eta)$ can be turned in to bounds on $N_G(X,L)$. First, for each $0<a<b$, recall that $N_G(X,[a,b])$ is the
number of closed geodesics in $G$ with length in the interval $[a,b]$. Using the effective prime geodesic theorem to deal with short geodesics in $G$, we can turn bounds on $N_G(X,[t+\eta,t-\eta])$ for all $t$ large enough into bounds on $N_G(X,L)$.
\begin{lemma}
\label{lem:shell-sum}
Let $X$ be a hyperbolic surface. Fix $\eta$ so that $0 < \eta < 1$. Let $L > 0$. Suppose there exists $C > 0$ so that for all $L/2 < t < L$, 
$N_G(X,[t-\eta,t+\eta]) < C \eta e^t/t$. Then,
 \[
 N_G(X,[L/2,L]) < 6 C \frac{e^L}{L}.
 \]
 
\end{lemma}
\begin{proof}
We can bound $N_G(X,[L/2,L])$ by the sum:
\[
 N_G(X,[L/2,L]) \leq \sum_{\frac L{2\eta}\leq i  \leq \frac L\eta }  N_G(X,[\eta (i-1),\eta (i+1)]).
\]
For all $i$ with $L/2\eta \leq i \leq L/\eta$, we have $L/2 \leq \eta i \leq L$. Thus, $N_G(X,[\eta (i-1),\eta (i+1)]) < C\eta \cdot \frac{e^{i \eta}}{i\eta}$. And so,
\[
 N_G(X,[L/2,L]) \leq C \eta \sum_{\frac L{2\eta}\leq i  \leq \frac L\eta }  \frac{e^{i\eta}}{i\eta}.
\]

Next, as $i \eta \geq L/2$ for all $i$, we have
\begin{align*}
\sum_{\frac L{2\eta}\leq i  \leq \frac L\eta }  \frac{e^{i\eta}}{i\eta} &\leq \frac 2L \sum_{\frac L{2\eta}\leq i \leq \frac L\eta } e^{i \eta} \\
& < \frac 2L\sum_{0 \leq i  \leq \frac L\eta } e^{i \eta} \\
& < \frac 2L \cdot \frac{e^\eta}{e^{\eta} - 1} e^L.
\end{align*}

Thus,
\[
 N_G(X,[L/2,L]) \leq  2C \frac{\eta e^\eta}{e^{\eta} - 1}\frac{e^L}{L}.
\]
We note that $e^\eta - 1 > \eta$ for all $\eta > 0$, so in particular, 
\[N_G(X,[L/2,L]) \leq  2C e^\eta \frac{e^L}{L}.\]

Since we chose $\eta < 1$, we see that $e^\eta < 3$, giving us the statement of the lemma.
\end{proof}

Finally, we will combine \Cref{lem:int-flows} and \Cref{lem:shell-sum} to estimate the total number of geodesics $N_G(X,L)$ given a bound on $N_G(B,t,\eta)$ for all flow boxes $B$, and all $t$ large enough. That is, if we can bound how many length $\eta$ arcs in a given flow box $B$ belong to geodesics in $G[t-\eta, t+\eta]$, then we can estimate the total number of geodesics in $G[0,L]$:

\begin{prop}
\label{prop:fb-to-total}
Fix $s_0 > 0$. Then for all $\eta > 0$ sufficiently small, the following holds. Let $X$ be a hyperbolic surface with systole at least $s_0$, and $G$ any set of closed geodesics on $X$. Suppose there is an $\epsilon > 0$ and an $L > 0$ so that, for all $\eta$ flow boxes $B \subset T^1X$ and all $t \in [L/2, L]$ we have
\[
 N_G(B,t,\eta) < \epsilon \mu(B) e^t .
\]
Then, 
\[
N_G(X,[L/2,L]) < 12 \epsilon \frac{e^L}{L}.
\]
\end{prop}
\begin{proof}

Let $X$ be a hyperbolic surface with systole at least $s_0$. Choose $\eta$ so that $0< \eta < s_0/2$. We will show that for all such $\eta$, we have
\[ N_G(X,[L/2,L]) < 6\epsilon (1 + O(\eta)) \frac{e^L}{L}\]
where the implicit constant in $O(\eta)$ depends only on $s_0$. Then for all $\eta$ sufficiently small, the $O(\eta)$ term is smaller than 1, giving us the statement of the proposition.

As $0 < \eta < s_0/2$, we can apply \Cref{lem:int-flows} to get 
 \[
  N_G(X,[t-\eta,t+\eta]) \cdot t \mu(B) = (1 + O(\eta)) \int_{v \in T^1X}\eta N_G(B(v),t,\eta)d\mu(v)
 \]
 where the constant in $O(\eta)$ depends only on $s_0$. 
 
 We assume that for all $t$ with $L/2 \leq t \leq L$, we have $N_G(B(v),t,\eta)< \epsilon  \mu(B)e^t$. So, 
 \[
 \int_{v \in T^1X}\eta N_G(B(v),t,\eta)d\mu(v) < \int_{v \in T^1X}\epsilon \eta \mu(B) e^t d\mu(v) = \epsilon \eta \mu(B) e^t
 \]
since $\mu$ is a probability measure. So,
\[
 N_G(X,[t-\eta,t+\eta]) < \epsilon \eta (1 + O(\eta)) \frac{e^t}{t}
\]
for all $L/2 < t < L$.

Setting $C = \epsilon (1 + O(\eta))$, we apply \Cref{lem:shell-sum} to get that
\[
 N_G(X,[L/2,L]) < 6\epsilon (1 + O(\eta)) \frac{e^L}{L},
\]
where, by \Cref{lem:int-flows}, the implicit constant depends only on $s_0$.

\end{proof}

\subsection{Anosov Closing Lemma}
\label{sec:Anosov_closing}

In this section, we give two versions of the Anosov Closing Lemma, relating components of $B\cap g_{-L}B$ to closed geodesics of length approximately $L$ passing through $B$.  These are standard results, and various versions can be found in the literature (see e.g. \cite{hk1995}).

The first lemma states that at most one simple closed curve of length roughly $L$ can pass through any given component of $B \cap g_{-L}B$. Moreover, any such simple closed curve passing through this component can only pass through at most once. It follows from the fact that geodesic flow $g_L$ has a contracting action in the stable horocycle direction for $L$ positive, and in the unstable horocycle direction for $L$ negative. We give a slightly expanded idea of the proof here for completeness.
\begin{lemma}
  \label{lem:closing-unique}
  Let $s_0 > 0$. There exists $\eta_0, L_0$ with the following property.  Let $X$ be a hyperbolic surface with systole at least $s_0$. Let $\eta\le \eta_0$ and let $B\subset T^1X$ be an embedded $\eta$ flow box.
Let $L > L_0$, and let $P$ be a connected component of $B \cap g_{-L} B$. Suppose $v_1, v_2 \in P$ lie on closed geodesics of length between $L-\eta$ and $L+\eta$. Then $v_1$ and $v_2$ lie on the same geodesic segment in $P$.

\end{lemma}

\begin{proof}

We choose $\eta_0$ so that any $\eta_0$-flow box is embedded. Let $B$ be an $\eta$-flow box for $\eta < \eta_0$. Let $L_0 > 2 s_0 + 1$, and take any $L > L_0$. 

Let $P$ be any component of $B \cap g_{-L}B$. The horocycle and geodesic flows foliate $X$ by center-unstable leaves, which in particular, induce a foliation of $P$. The geodesic flow acts by contraction in the stable direction, which is transverse to the center-unstable leaves. In particular, the map $g_L$ sending $P$ to $g_LP$ acts by contraction, and sends leaves of the foliation of $P$ to leaves of the foliation of $g_L P$. Thus, $g_L$ fixes at most one such leaf. 

Next, the fixed center-unstable leaf is foliated by geodesic segments, which are transverse to the unstable direction. Again, $g_{-L}$ acts by contraction in the unstable direction, so at most one geodesic segment is fixed. If such a fixed geodesic segment exists, then it is the unique segment in $P$ that lies on a closed geodesic of length $\ell \in [L-\eta, L+\eta]$. 

Hence, if $v_1, v_2 \in P$ lie on a closed geodesic of length  $\ell \in [L-\eta, L+\eta]$, then they must lie on the same geodesic segment in $P$.

\end{proof}

The second version of the Anosov Closing Lemma directly relates the number of components of $B \cap g_{-L}B$ to the number of times closed geodesics of length roughly $L$ can pass through $B$.        
\begin{lemma}
  \label{lem:closing-exists}
  Let $\kappa, s_0>0$.  There exist $\eta_0=\eta_0(\kappa,s_0)$ and $L_0=L_0(\kappa,s_0)$ such that the following holds. Let $X$ be any hyperbolic surface with systole at least $s_0$, let $\eta\le \eta_0$, and let $B\subset T^1X$ be an $\eta$ flow box.  Let $N(B,L,\eta)$ be the number of length $\eta$ geodesic segments inside of $B$ that lie on a closed geodesic of length in $[L-\eta,L+\eta]$.  Then for any $L\ge L_0$, 
  \begin{align*}
   \comp(B^{-\kappa} \cap g_{-L}B^{-\kappa}) \le  N(B,L,\eta) \le \comp(B \cap g_{-L}B).
  \end{align*}
\end{lemma}

For an idea of the proof, suppose that our flow box $B$ were really a Euclidean cube. Moreover, $g_L$ maps any component $P$ of $B \cap g_{-L} B$ to a subset $g_LP \subset B$. Up to edge effects, a component $P$ of $B \cap g_{-L}B$ would have full width $\eta$ in the contracting direction. The contracting action of $g_L$ then allows us to find a fixed point in the contracting direction. Likewise, we can use $g_{-L}$ to find a fixed point in the expanding direction. This means that for some $v \in P$, both $v$ and $g_L v$ lie on the same geodesic segment in $B$. But this means that $v$ lies on a closed geodesic of length roughly $L$ passing through $B$. We now make this proof precise.
\begin{proof}
We choose $\eta_0$ so that any $3\eta_0$-flow box is embedded, and so that $1-\kappa/2 < e^{-\eta_0}$. By \Cref{lem:full-geod-seg}, the latter condition guarantees that each $v \in B^{-\kappa/2}$ lies on a geodesic segment of length $\eta$ in $B$. Let $L_0$ be large enough so that any connected component $P$ of $B^{-\kappa/2} \cap g_{-L} B^{-\kappa/2}$ that intersects $B^{-\kappa} \cap g_{-L} B^{-\kappa}$ is a full component of intersection, as defined in \cite[Definition 8.7.4]{hk1995}. 

Suppose $\delta$ is a geodesic segment of length $\eta$ in $B$, so that $\delta$ lies on a closed geodesic of length between $L-\eta$ and $L+\eta$. That is, there is some $v \in \delta$ so that $g_tv = v$ for $v \in [L-\eta, L+\eta]$. But then, there is some $t_0$ with $|t_0| < \eta$ so that $w = g_{t_0}v \in B$, and $g_Lw = g_{L+t_0}v \in B$. In other words, $w$ lies on the same geodesic segment $\delta$ as $v$, and $g_Lw = w$. Thus, $w$ lies in some connected component of $B \cap g_{-L}B$. In other words, $\delta$ intersects some connected component $P$ of $B \cap g_{-L}B$.

Moreover, $\delta$ cannot intersect more than one connected component. By choosing $\eta < s_0/3$, we guarantee that any geodesic segment with endpoints on $B$, and interior disjoint from $B$, must have length at least $2\eta$. But $\delta$ has length $\eta$, so $g_L(\delta) \cap B$ must be connected. Thus, any geodesic segment of length $\eta$ can intersect at most one connected component of $B \cap g_{-L}B$. Therefore,
\[
N(B,L,\eta) \leq \comp(B \cap g_{-L}B).
\]

For the lower bound, let $P^-$ be a connected component of $B^{-\kappa/2} \cap g_{-L}B^{-\kappa/2}$ that intersects $B^{-\kappa} \cap g_{-L}B^{-\kappa}$. By \cite[Lemma 8.7.5]{hk1995}, there is a unique geodesic segment $\delta$ in $B^-$ so that $\delta$ lies on a closed geodesic of length between $L-(1-\kappa/2)\eta$ and $L+ (1-\kappa/2)\eta$, and $\delta \cap P^- \neq \emptyset$. In particular, let $v \in \delta \cap P^-$.  Then, by \Cref{lem:full-geod-seg}, $v$ lies on a geodesic segment of length $\eta$ in $B$. Therefore,
\[
\comp (B^{-\kappa} \cap g_{-L}B^{-\kappa}) \le  N(B,L,\eta). 
\]

\end{proof}

\section{Effective prime geodesic theorem}
\label{sec:effect-prime-geod}  

Using techniques developed by Margulis (\cite{Margulis_thesis},
also see \cite[Section 20.6]{hk1995}) and effective mixing (\Cref{lem:effect-mix}), we prove \Cref{thm:effect-pgt}, an effective version of the prime geodesic theorem for surfaces with definite spectral gap.  While we were writing this paper, Wu-Xue proved a related, somewhat stronger, result \cite[Theorem 2]{wx2022}; they use the Selberg Trace Formula, which is a fundamentally different approach.  

We break the proof into the two lemmas below, dealing with the longer and shorter geodesics, respectively.  The basic strategy is similar for the two, but the technical issues are a bit different.  
\begin{lemma}
    \label{lem:effect-pgt-half}
    Fix $\delta>0, s_0>0, \epsilon>0$.  There exists a constant $c=c(\delta,s_0,\epsilon)$ such that for any $\delta$-expander surface $X$ of genus $g$ with systole greater than $s_0$, and $L> c\log g$,
    \begin{align*}
        N(X,[L/2,L]) = (1+O(\epsilon)) \cdot \frac{e^L}{L},
    \end{align*}
    where the implicit constant in $O()$ has magnitude less than $1$.  
\end{lemma}

\begin{proof}
The only ways in which we will use the particular geometry of the surface $X$ are (i) a lower bound on systole to ensure that the flow boxes are embedded, and (ii) the rate of mixing.  

Choose $\eta$ small, which for now means small enough such that every $5\eta$ flow box is embedded; later we will make $\eta$ even smaller. 
For $\eta$ sufficiently small, each $\eta$ flow box $B:=B(v) \subset T^1X$ behaves very much like a product.

By \Cref{cor:effect-mix-log} applied with the box $B^{+\kappa}$, where $\kappa$ is small, we get there exists $c=c(\delta,\eta,\epsilon)$ such that for $t\ge c\log g$, we have
\begin{align}
    \mu(g_{-t}B^{+\kappa} \cap B^{+\kappa}) = \mu(B^{+\kappa})^2 \left(1 +O(\epsilon) \right)= \mu(B)^2 \left(1 +O(\epsilon) \right), \label{eq:mix-soft}
\end{align}
where the implicit constant in $O()$ is less than $1$.

Now we will use our knowledge of the shape of components of $g_{-t}B\cap B$.  We apply \Cref{lem:inter-box-eps} and enlarge our $c$ so that $c\log g$ is greater than the $\tau_0$ from that lemma (for all $g\ge 2)$.  We get that, for $t\ge c \log g$, each component of $g_{-t}B\cap B$ is contained in a component of $g_{-t}B^{+\kappa}\cap B^{+\kappa}$ that has full width in the contracting direction and width $e^{-t}\eta$ in the expanding direction; denote by $\mathcal C$ the set of these components of $g_{-t}B^{+\kappa}\cap B^{+\kappa}$.  Note that each component of $\mathcal C$ has width at least that of the corresponding component of $g_{-t}B \cap B$; hence the average width in the flow direction over the components in $\mathcal C$ is at least that of the average for $g_{-t}B \cap B$, which by \Cref{lem:ave-flow-width} is $(1+O(\epsilon))(\eta/2)$ (making the constant $C$ larger if necessary).  %
It follows that 
\begin{align}
    \text{ average vol of components of }\mathcal C %
        &\ge (1+O(\epsilon)) (1/2) e^{-t} \mu(B).  \label{eq:ave-vol}
\end{align}
Then using \eqref{eq:mix-soft} and \eqref{eq:ave-vol}, we get
\begin{align}
    \comp(g_{-t}B\cap B) = |\mathcal C| &\le \frac{\mu(g_{-t}B^{+\kappa}\cap B^{+\kappa})}{\text{ average volume of components of }\mathcal C} \\
    & \le  \frac{\mu(B)^2(1+O(\epsilon))}{(1/2)e^{-t} \mu(B) (1+O(\epsilon))} \\
    & = 2 e^t \mu(B) (1+O(\epsilon)).  
\end{align}

A similar argument using the smaller box $B^{-\kappa}$ gives a lower bound akin to the one above, and altogether we get 
\begin{align}
    \comp(g_{-t}B\cap B) = 2 e^t \mu(B) (1+O(\epsilon)). \label{eq:num-comps}
\end{align}

We also get analogous bounds for $\comp(g_{-t}B^{+\kappa} \cap B^{+\kappa})$ and $\comp(g_{-t}B^{-\kappa} \cap B^{-\kappa})$.  Then by \Cref{lem:closing-exists} (Anosov Closing), 
we get that for $L>c \log g$ (where $c$ is made larger if necessary so that $c\log g$ is greater than the $L_0$ from \Cref{lem:closing-exists}), 
\begin{align*}
    N(B,L,\eta) = 2e^L \mu(B) \cdot \left(1 + O(\epsilon)\right).
\end{align*}

To count the number $N(X,[L-\eta,L+\eta])$ of all closed geodesics of length in $[L-\eta, L+\eta]$ we apply \Cref{lem:int-flows} and the above, taking $\eta$ sufficiently small, to get
\begin{align*}
   N(X,[L-\eta, L+\eta]) \cdot L \cdot \mu(B) 
     &=(1+O(\epsilon+\eta)) \int _{T^1X} \eta \cdot N(B(v),L,\eta)  d\mu(v)\\
    &=(1+O(\epsilon+\eta)) \eta \cdot 2e^L\mu(B). 
 \end{align*}

Rearranging gives
\begin{align}
  N(X, [L-\eta, L+\eta]) =  \frac{2\eta e^L}{L} (1+O(\epsilon+\eta)). \label{eq:experror}
\end{align}
To compute $N(X, L)$ we sum the above over $L$ with values separated by $2\eta$ (by doubling $c$, we can ensure the estimate \eqref{eq:experror} above holds for lengths in $[L/2,L]$ when $L\ge c\log g$). %
We approximate the sum by the integral $\int_{L/2}^L (e^t/t) dt$.  By taking $\eta$ small, we can get this estimate accurate to within a factor of $1+\epsilon/2$; this may involve increasing $c$.  This integral is in turn equal to $(1+O(\epsilon+\eta)) e^L/L$ (recall that we are taking $L>c\log g$, where the $c$ can be a large constant, and hence we can assume $L$ is large).  Since $\eta$ is small, this gives the desired result. %

\end{proof}

We now give a coarser bound that works at lower length scales.  

\begin{lemma}
    \label{lem:effect-pgt-upper}
    Fix $\delta>0, s_0>0$.  There exists a constant $c=c(\delta,s_0)$ such that for any $\delta$-expander surface $X$ of genus $g$ with systole greater than $s_0$, and $L> c \log g$,
    \begin{align*}
        N(X,L) \le c e^L.
    \end{align*}
\end{lemma}

\begin{proof}

    This proof parallels the proof of \Cref{lem:effect-pgt-half}, but the technical issues are somewhat different.  We do not need to be as precise, since it is allowable to lose constant factors, but on the other hand, the estimates must also account for geodesics of smaller length.  

    We first address the geodesics of \textit{bounded} length (independent of genus); here we will use an a priori bound that does not need the expander assumption or effective mixing.  Let $\tau$ be any fixed number.  Then, by \cite[Theorem 6.6.4]{buser2010}
    \begin{align*}
        N(X,\tau) \le (g-1) e^{\tau + 6} + (3g-3) \tau/s_0. 
    \end{align*}
    (The second term on the right accounts for geodesics that are iterates of geodesics of length $\le \operatorname{arcsinh} 1$, of which there are at most $3g-3$; the first term accounts for the others.) 
    The relevant feature of the above for us is that it grows at most polynomially (in fact, linearly) in genus.  In particular, when $L>c\log g$ (for $c>1$), we get 
    \begin{align}
        N(X,\tau) = O(e^L) \label{eq:0tau},
    \end{align}
    where the implicit constant in the $O()$ depends only on $c,s_0,\tau$.  
    
    Now fix $\eta$ small, so that every $5\eta$ flow box is embedded (thus how small $\eta$ needs to be depends on the systole $s_0$, but not anything else, and in particular not on genus).  Let $B\subset T^1X$ be any $\eta$ flow box.  

     By \Cref{lem:effect-mix}, for $t\ge 0$ (using any fixed small $\epsilon'$), we get
    \begin{align}
        \mu(g_{-t}B^{+} \cap B^{+}) =  O\left( \mu(B^{+})^2 \left(1 + \frac{1}{\mu(B^{+})}e^{-\kappa t})\right) \right),
        \label{eq:mix-plus} 
    \end{align}
    where $\kappa>0$ only depends on $\delta$, and the implicit constant in $O()$ depends on $\eta$. 

    We apply \Cref{lem:inter-box-eps} with $\kappa =2$, so that $B^{+\kappa} = B^+$.  This gives that for $t\ge \tau_0(2,\eta)$, each component of the set $g_{-t}B\cap B$ is contained in a component of $g_{-t}B^{+}\cap B^{+}$ that has full width in the expanding direction and width $e^{-t}\eta$ in the contracting direction; denote by $\mathcal C$ the set of these components of $g_{-t}B^{+}\cap B^{+}$.  
    It follows that 
    \begin{align}
        \text{ ave vol of components of }\mathcal C &\ge \text{ min vol of components of }\mathcal C \\
            &\ge e^{-t} \mu(B),  \label{eq:ave-vol-upper}
    \end{align}
    where we have used that the width in the flow direction of components of $\mathcal C$ is at least the width of $B$ in the flow direction.

    Then using \eqref{eq:mix-plus} and \eqref{eq:ave-vol-upper}, we get
    \begin{align*}
        \comp(g_{-t}B\cap B) = |\mathcal C| &\le \frac{\mu(g_{-t}B^{+}\cap B^{+})}{\text{ave vol of components of }\mathcal C} \\
        & \le   \frac{O\left( \mu(B^{+})^2 \left(1 + \frac{1}{\mu(B^{+})}e^{-\kappa t})\right) \right)}{e^{-t} \mu(B)} \\
        & = O\left( e^t (\mu(B) + e^{-\kappa t} ) \right),  
    \end{align*}
    where the implicit constant in $O()$ depends on $\eta$.  We also get analogs of the above for $\comp(g_{-t}B^+\cap B^+)$ and $\comp(g_{-t}B^-\cap B^-)$.  Now we use \Cref{lem:closing-exists} (Anosov Closing), combined with the above to get that, for $L_0$ the max of $\tau_0$ and the $L_0$ from \Cref{lem:closing-exists} and any $L\ge L_0$, 
    \begin{align*}
        N(B,L,\eta) = O\left( e^L (\mu(B) + e^{-\kappa L} ) \right).
    \end{align*}

    To count the number $N(X,[L-\eta,L+\eta])$ of all closed geodesics of length in $[L-\eta, L+\eta]$ we apply \Cref{lem:int-flows} and the above to get %
    \begin{align*}
       N(X,[L-\eta, L+\eta]) & \cdot L \cdot \mu(B) 
        = (1+O(\eta)) \int _{T^1X} \eta \cdot N(B(v),L,\eta)  d\mu(v)\\
        & \le  O\left( \eta \cdot e^L (\mu(B) + e^{-\kappa L} ) \right). 
     \end{align*}

     Moving terms to the right (and dropping the resulting $1/L$ term, which is not small) gives 
    \begin{align*}
       N(X,[L-\eta, L+\eta]) & = O\left( \eta \cdot e^L \left(1 + \frac{e^{-\kappa L}}{\mu(B)} \right)  \right).
    \end{align*}

Now, summing the above, and then approximating the sum by an integral, gives
\begin{align*}
    N(&X,[L_0,L])\\
    & \le N(X,[L_0,L_0+2\eta])+N(X,[L_0+2\eta,L_0+4\eta]) + \cdots + N(X,[L-\eta,L+\eta]) \\ 
    &= O\left( e^L \left(1 + \frac{e^{-\kappa L}}{\mu(B)} \right) \right).
\end{align*}

Now approximating $\mu(B)$ by $\eta^3/g$, we note that we can find $c$ (depending on $\eta$ and $\kappa$) such that for $L>c\log g$,
\begin{align*}
    \frac{e^{-\kappa L}}{\mu(B)} \le 1
\end{align*}
and thus %
\begin{align}
    N(X,[L_0,L]) = O(e^L), \label{eq:tauL}
\end{align}
where the implicit constant in $O()$ depends only on $\eta$.  Now $\eta$ depends only on the systole $s_0$, and $\kappa$ only on the spectral gap $\delta$, so in fact the implicit constant depends only on $\delta,s_0$.  

Now using \eqref{eq:0tau}, with $\tau=L_0$, and \eqref{eq:tauL}, we get  
\begin{align*}
    N(X,L) = N(X,L_0) + N(X,[L_0,L]) = O(e^L) + O(e^L) = O(e^L),
\end{align*}
which completes the proof. 

\end{proof}

\begin{thm}[Effective prime geodesic theorem]
\label{thm:effect-pgt}
Fix $\delta, s_0, \epsilon>0$.  There exists a constant $c=c(\delta,s_0,\epsilon)$ such that for any $\delta$-expander surface $X$ of genus $g$ with systole greater than $s_0$, and $L> c\log g$,
  \begin{align*}
    1-\epsilon \le  \frac{N(X,L)}{ e^L/L } \le 1+\epsilon.  
  \end{align*}
\end{thm}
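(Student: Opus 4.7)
The plan is to adapt Margulis' dynamical proof of the prime geodesic theorem (\cite{margulis2004}, \cite[\S 20.6]{hk1995}) to the quantitative setting, using \Cref{lem:effect-mix} in place of qualitative mixing. The systole bound supplies a uniform scale at which flow boxes are embedded, and the spectral gap supplies effective mixing with constants depending only on $\delta$; together they let every step of the Margulis argument go through with error terms depending only on $\delta, s_0, \epsilon$.

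First, I would fix $\eta = \eta(s_0) > 0$ small enough that the $3\eta$ flow box around every $v \in T^1X$ is embedded. For a single flow box $B$ of size $\eta$ and a small window $\Delta > 0$, the central quantity is
\[
I_B(L, \Delta) := \int_L^{L+\Delta} \mu\bigl(B \cap g_{-t}B\bigr)\, dt.
\]
On the analytic side, \Cref{lem:effect-mix} gives
\[
I_B(L, \Delta) = \Delta \mu(B)^2 (1 + O(\epsilon)) + O\bigl(\Delta f(\eta, \epsilon) \mu(B) L e^{-\kappa L}\bigr),
\]
and since $\mu(B)$ is comparable to $\eta^3/g$, the error is dominated by the main term once $L > c(\delta, s_0, \epsilon) \log g$, precisely matching the length scale in the theorem. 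On the geometric side, $B \cap g_{-t}B$ decomposes into helical pieces around the closed geodesics of length close to $t$ passing through $B$: a direct computation in flow-box coordinates shows that each passage of a closed geodesic of length $\ell \in [L, L+\Delta]$ contributes approximately $\eta^4 e^{-\ell}$ to $I_B(L, \Delta)$ (the unstable direction is contracted by $e^{-\ell}$ while the other coordinates remain of size $\eta$). Equating the two evaluations and summing over a uniformly overlapping cover of $T^1X$ by approximately $g/\eta^3$ flow boxes---using that each closed geodesic of length $\ell$ passes through approximately $\ell/\eta$ boxes---yields
\[
N(X, [L, L+\Delta]) = (1 + O(\epsilon)) \Delta e^L/L.
\]
Summing over windows from $c \log g$ to $L$ then gives $N(X, L) = (1 + O(\epsilon)) e^L/L$, since $\sum_{T \le L} \Delta e^T/T$ is dominated by the last term. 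Closed geodesics of length below $c \log g$ are polynomially many in $g$ (by standard Buser-type bounds under the systole lower bound) and so are absorbed into the error.

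The main obstacle is making the helical decomposition of $I_B(L, \Delta)$ rigorous and uniform in $g$. One must carefully handle edge effects where geodesics barely clip $\partial B$, correctly count multiplicities (repeated passages of one closed geodesic through a single box, and iterates of primitive geodesics contributing to several windows), and verify that the cover of $T^1X$ has uniformly bounded overlap. A subtle interaction of parameters is that $\eta$ must be fixed based only on $s_0$: the mixing error factor $f(\eta, \epsilon)$ in \Cref{lem:effect-mix} blows up as $\eta \to 0$, so shrinking $\eta$ with $\epsilon$ would force the bound $L > c \log g$ to grow faster than logarithmically. The statement's dependence of $c$ on $\epsilon$, while the $\log g$ rate itself is independent of $\epsilon$, precisely accommodates this.
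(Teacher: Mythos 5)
Your overall strategy is the one the paper uses: follow Margulis, replace qualitative mixing by the effective mixing of \Cref{lem:effect-mix}, count components of $B \cap g_{-t}B$ via Anosov closing, average over flow boxes $B$, and sum over length windows. The integral $I_B(L,\Delta)$ is just a windowed version of the paper's direct count of $\comp(g_L B \cap B)$, and the component-shape analysis, the averaging identity (each length-$\ell$ geodesic passes through $\approx \ell/\eta$ boxes), and the absorption of the mixing error once $L > c\log g$ all match the paper's argument. So the approach is essentially the same, not a different route.

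There are, however, two places where the proposal gets the bookkeeping wrong in a way that would actually break the argument if taken literally. First, you assert that $\eta$ must be fixed depending \emph{only} on $s_0$, and that shrinking $\eta$ with $\epsilon$ would push the length threshold above $c\log g$. Both halves are incorrect. The approximation of the Riemann sum $\sum 2\eta e^{L_i}/L_i$ by $\int e^t/t\,dt$, and the fact that geodesics in a window $[L-\eta,L+\eta]$ do not all have length exactly $L$, both carry $o(1)$ errors as $\eta \to 0$. If $\eta$ is held fixed (depending only on $s_0$), that discretization error is a \emph{fixed} multiplicative constant, and you cannot bring the final estimate within a factor $1\pm\epsilon$ for arbitrary $\epsilon$. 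The paper therefore does take $\eta$ small in terms of $\epsilon$. And crucially, doing so does \emph{not} degrade the $\log g$ rate: since $\eta$ is independent of $g$, the factor $f(\eta,\epsilon')$ in the error $\frac{g}{\eta^3}f(\eta,\epsilon')\,L e^{-\kappa L}$ is just a (possibly large) constant, and choosing $c$ large enough makes $g\cdot L \cdot e^{-\kappa L} \cdot f(\eta,\epsilon')/\eta^3 \le \epsilon$ whenever $L > c\log g$, with $c$ depending on $\epsilon$ (through $\eta,\epsilon'$) but the rate still $\log g$.

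Second, the claim that geodesics shorter than $c\log g$ ``are polynomially many in $g$ and so are absorbed into the error'' is too quick. For $L$ only slightly above $c\log g$, the main term $e^L/L$ is itself only polynomial in $g$ (roughly $g^c/\log g$), so a crude polynomial bound on $N(X, c\log g)$ does not automatically show it is an $\epsilon$-fraction. The paper sidesteps this entirely by exploiting the systole bound: $N(X, s_0)=0$, so the window sums and the error integrals $E_1, E_2$ are taken from $s_0$ to $L$ directly, with no separate low-length regime to dispose of. You should do the same; it is both cleaner and necessary near the lower end of the allowed range of $L$.
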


\begin{proof}

Let $c=c(\delta,s_0,\epsilon)$ be the maximum of the constants given by \Cref{lem:effect-pgt-half} and \Cref{lem:effect-pgt-upper}.  

Then for $L> 2 c \log g$, by those lemmas,
\begin{align*}
    N(X,L) = N(X,[L/2,L]) + N(X,L/2) & = (1+O(\epsilon)) \cdot \frac{e^L}{L} + O(c e^{L/2}), 
\end{align*}
where the constants in both the $O()$'s are less than $1$ in magnitude. 
For $L$ large (depending on $c$ and $\epsilon$, but not on $g$), we have $c\cdot e^{L/2} < \epsilon \cdot e^L/L$.  Using this in the above, and making $c$ somewhat larger if necessary, we get, for $L>c\log g$,
\begin{align*}
    N(X,L) =  (1+O(2\epsilon)) \cdot \frac{e^L}{L},
\end{align*}
which yields the desired result.

\end{proof}

 \section{Effective multiple mixing and related tools}
\label{sec:effect-mult-mix}

\subsection{Lemmas on intersections with subboxes}
\label{sec:lems-inter}
Here we prove lemmas that will serve as the analog of the Markov property for random walks (future behavior is independent of past behavior).  These are used in the proof of effective multiple mixing \Cref{thm:effect-mult-mix}, and in the proofs of the main theorems.  Concretely, they concern the intersection of a subbox with the preimage of a full box under geodesic flow for a sufficiently large time.  While we do not have (fast) effective mixing for arbitrary thin subboxes, under certain conditions involving their shape, we can get control using effective mixing of the full flow boxes that contain them.  

\Cref{lem:band-mix} 
concerns a subbox that is full in the expanding direction; this condition corresponds to conditioning only on \textit{past} behavior.  This subbox is intersected with the preimage of a flow box under $g_{-T}$, i.e. conditioning on \textit{future} behavior.  The measure statement in this lemma corresponds to independence of the past and future.   %

We will deduce \Cref{lem:band-mix} from \Cref{lem:band-mix-aux}, which gives more information (specifically on intersection components' \textit{shape}, which corresponds to a condition coming from not too far in the future).  In applications in the sequel, we will frequently need this additional information, (since we apply the result iteratively).  For instance, this is what we do in the proof of effective multiple mixing \Cref{thm:effect-mult-mix}.  

A techincal obstacle is the phenomenon of ``edge effects", which means the shape of some components of intersection differs from the typical shape.  %
To deal with this, we make use of slightly enlarged flow boxes $B_i^{+\kappa}$ (defined in \Cref{sec:notation-setup}).

\begin{lemma}
    \label{lem:band-mix-aux}
        Fix $\delta, \epsilon >0$.  For any $\eta_0$ small (depending on $\delta, \epsilon$), there exists $c=c(\delta, \epsilon, \eta_0)$ with the following property.  Let $X$ be a $\delta$-expander surface.  
    Let $B_0,B_1\subset T^1X$ be $\eta$ flow boxes, where $\eta_0/1000 \le \eta \le \eta_0$, and such that $B_0^+,B_1^+$ are embedded.  Let $T\ge c\log g$.  Then there exists $F\subset T^1X$ such that:
    \begin{enumerate}[(i)]
        \item $F\subset B_0 \cap g_{-T}B_1$, \label{item:subset}
        \item every component of $F$ has (with respect to $B_0$) full width in the contracting direction and width $\ge e^{-T}\eta$ in the expanding direction,  \label{item:Fcomp}
        \item for any $P\subset B_0$ a subbox that is full width in the expanding direction, we have
        $$\mu (P\cap F) = (1+O(\epsilon)) \cdot \mu(P) \mu(B_1),$$ \label{item:muPcapF}
        where the implicit constant in the $O()$ is less than $1$.

    \end{enumerate}
\end{lemma}

\begin{proof}
 Take $\kappa>0$ small (see later in proof for precisely how small).  Recall that $B_i^{-\kappa}$ is a flow box with the same center as $B_i$, but with width $(1-\kappa)\eta$ in each direction.   Let $F$ be the union of components of $B_0\cap g_{-T}B_1$ that also intersect $B_0^{-\kappa} \cap g_{-T}(B_1^{-\kappa})$.  Clearly $F\subset B_0 \cap g_{-T}B_1$, establishing \eqref{item:subset}. By \Cref{lem:inter-box-eps}, if we choose $c$ large enough so that $c\log g$ is greater than the $\tau_0$ from that lemma, then 
the components of $F$ are all full in the contracting direction (as subsets of $B_0$), and width $\ge e^{-T}\eta$ in the expanding direction, establishing \eqref{item:Fcomp}.

  By effective mixing \Cref{cor:effect-mix-log},  we can choose $c>0$ such that if $T>c\log g$,
  \begin{align}
    \mu(B_0 \cap g_{-T}B_1) = (1+O(\epsilon)) \cdot \mu(B_0)\mu(B_1).   \label{eq:B0-B1-mix}
  \end{align}
    Now can apply effective mixing with smaller flow boxes (covering the region between $B_0$ and $B_0^{-\kappa}$) to see that only a small proportion of the measure (arbitrarily small by taking $\kappa$ small) of $B_0\cap g_{-T}B_1$ is not in $F$.  Thus we get that 
    \begin{align}
    \mu(F) = (1+O(\epsilon)) \cdot \mu(B_0 \cap g_{-T}B_1).  \label{eq:muF}
    \end{align}

    Now arguing as in the proof of \Cref{lem:ave-flow-width}, gives that the components of $B_0 \cap g_{-T}B_1$ are close to equidistributed in the flow direction.  The same statement is true of components of $F$, by the reasoning used in the previous paragraph.    %

    \begin{figure}
     \centering 
     \includegraphics [scale=0.63]{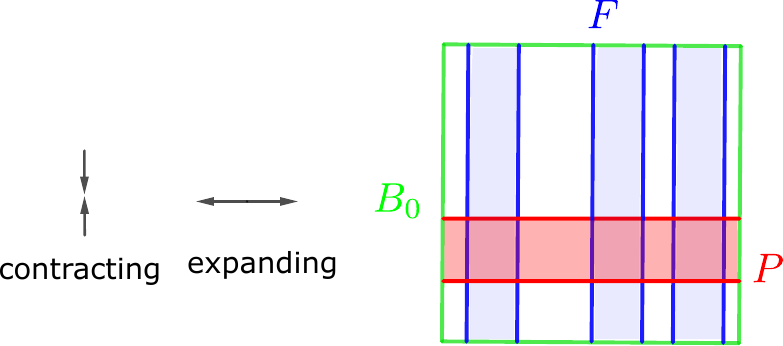}
     \caption{The geometric mechanism corresponding to the ``Markov property" $\mu'(P\cap F)=\mu'(P)\mu'(F)$ in \Cref{lem:band-mix-aux}.  The components of $F$ are full width in the contracting direction, corresponding to a condition on \textit{future} behavior.  The subbox $P$ is full width in the contracting direction, which could come from conditioning on \textit{past} behavior.  The future and past are independent. (The geodesic flow direction has been suppressed in the diagram.) }
     \label{fig:band_mix}
    \end{figure}

    To make the rest of the computation cleaner, we will introduce a measure $\mu'$ on $B_0$, defined by restricting $\mu$ to $B_0$ and then rescaling to get a probability measure. (The advantage of this rescaling is that for two subsets of $B_0$, being ``uncorrelated" corresponds to the measure of their intersection equaling the product of their measures.)  We now consider the geometry of the intersection of $P$ with a component $\mathcal C$ of $F$. We will first suppose (unrealistically) that $P$ and $\mathcal C$ both have full width in the flow direction.  
    Then since $P$ is full width in the expanding direction and $\mathcal C$ is full width in the contracting direction (both with respect to $B_0$), we must have that $\mu'(P\cap \mathcal C) = \mu'(P) \mu'(\mathcal C)$ (see \Cref{fig:band_mix}).  Now, in the previous paragraph we saw that components of $F$ are (close to)  equidistributed in the flow direction.  Hence, we also have that $P$ and $F$ are uncorrelated in the flow direction, and so combined with the previous observation, we get that 
    \begin{align}
        \mu'(P \cap F) = (1+O(\epsilon)) \cdot \mu'(P) \mu'(F) \label{eq:PcapFnorm}.
    \end{align}

    Now we combine the various bounds from above (and use the definition of $\mu'$ in several places):
    \begin{align*}
        \mu(P\cap F) & =  \mu'(P\cap F)\mu(B_0)\\
        & = (1+O(\epsilon)) \cdot \mu'(P) \mu'(F) \mu(B_0)  &&\text{ (by \eqref{eq:PcapFnorm})}\\
        & = (1+O(\epsilon)) \frac{\mu(P)}{\mu(B_0)} \frac{\mu(F)}{\mu(B_0)}  \mu(B_0)\\
        & = (1+O(\epsilon)) \frac{\mu(P)}{\mu(B_0)} \frac{(1+O(\epsilon)) \mu(B_0 \cap g_{-T}B_1)}{\mu(B_0)}  \mu(B_0)  &&\text{ (by \eqref{eq:muF})} \\
        & = (1+O(\epsilon)) \frac{\mu(P)}{\mu(B_0)} \frac{(1+O(\epsilon)) \mu(B_0)\mu(B_1)}{\mu(B_0)}  \mu(B_0)  &&\text{ (by \eqref{eq:B0-B1-mix})} \\
        & = (1+O(\epsilon)) \cdot \mu(P)\mu(B_1),
    \end{align*}
    which is \eqref{item:muPcapF}, and so we are done.  
\end{proof}

\begin{lemma}
    \label{lem:band-mix}
    Fix $\delta, \epsilon >0$.  For any $\eta_0$ small (depending on $\delta, \epsilon$), there exists $c=c(\delta, \epsilon, \eta_0)$ with the following property.  Let $X$ be a $\delta$-expander surface.  
    Let $B_0,B_1\subset T^1X$ be $\eta$ flow boxes, where $\eta_0/1000 \le \eta \le \eta_0$, and such that $B_0^+,B_1^+$ are embedded.  Let $P\subset B_0$ be a subbox that is full width in the expanding direction.  Then for $T\ge c\log g$,
  \begin{align*}
    \mu(P\cap g_{-T}B_1) = (1+O(\epsilon)) \cdot \mu(P) \mu(B_1),
  \end{align*}
  where the implicit constant in the $O()$ is less than $1$.  
\end{lemma}

\begin{proof}
    We first prove the lower bound part of the desired statement.   Apply \Cref{lem:band-mix-aux}.  Using the resulting set $F$ and its properties \eqref{item:subset} and \eqref{item:muPcapF}, we get:
    \begin{align*}
        \mu(P\cap g_{-T} B_1) & \ge \mu(P\cap F) =(1+O(\epsilon)) \cdot \mu(P) \mu(B_1). 
    \end{align*}

    The upper bound is proved in an analogous way (using a version of \Cref{lem:band-mix-aux} that produces a \textit{super} set $F\supset B_0\cap g_{-T} B_1$).

\end{proof}

We now prove a related result where we intersect sets corresponding to a condition from the future (time $t$), and a condition from the \textit{significantly farther} future (time $T\ge t+c\log g$).  The result says that these conditions are very close to independent.  It follows easily from \Cref{lem:band-mix} by applying geodesic flow.

\begin{lemma}
    \label{lem:band-future}
    Fix $\delta, \epsilon >0$.  For any $\eta_0$ small (depending on $\delta, \epsilon$), there exists $c=c(\delta, \epsilon, \eta_0)$ with the following property.  Let $X$ be a $\delta$-expander surface.  
    Let $B_0,B_1\subset T^1X$ be $\eta$ flow boxes, where $\eta_0/1000 \le \eta \le \eta_0$, and such that $B_0^+,B_1^+$ are embedded.    
    Let $t\ge 0$, and let $P\subset B_0$ be a subbox that is full width in the contracting direction, and width $e^{-t}\eta$ in the expanding direction. %
    Then if $T\ge t + c\log g$, 
    \begin{align*}
      \mu(P\cap g_{-T}B_1) = (1+O(\epsilon)) \cdot \mu(P) \mu(B_1),
    \end{align*}
    where the implicit constant in the $O()$ is less than $1$.  
\end{lemma}

\begin{figure}
     \centering 
     \includegraphics [scale=0.63]{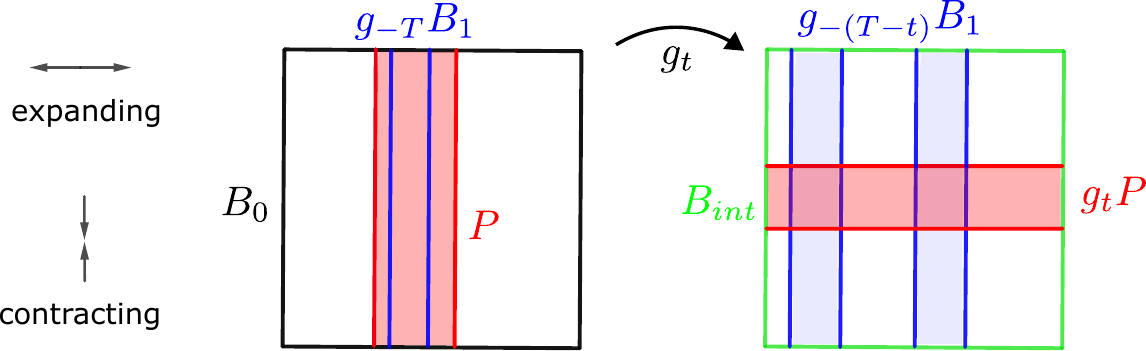}
     \caption{\Cref{lem:band-future} and its proof.}
     \label{fig:band_future}
\end{figure}

  \begin{proof}

    Note that $g_t(P)$ has expanding width $\eta$, contracting width $e^{-t}\eta$ (and flow direction width is unchanged). Take an $\eta$ flow box $B_{int}$ centered at the center of $g_t(P)$.  See \Cref{fig:band_future}.  

    Now we apply \Cref{lem:band-mix} with boxes $B_{int},B_1$, subbox $g_t(P)\subset B_{int}$, and time $T-t \ge c\log g$.  We get that
    \begin{align*}
      \mu(g_t(P)\cap g_{-(T-t)} B_1) =  (1+O(\epsilon)) \cdot \mu(g_tP) \mu(B_1), 
    \end{align*}
and then using invariance of measure under geodesic flow $g_{-t}$ gives
\begin{align*}
  \mu(P\cap g_{-T}B_1) = (1+O(\epsilon))  \mu(P) \mu(B_1).  
\end{align*}
  \end{proof}

The next lemma is a variant of the above.  The conclusion is a %
bound on the number of components (rather than measure).  An additional assumption on fullness in the geodesic flow direction is needed.  This will be used in \Cref{lem:no-inter-return} to count closed geodesics.

\begin{lemma}
\label{lem:comp-P-cap-B}
    Fix $\delta, \epsilon >0$.  For any $\eta_0$ small (depending on $\delta, \epsilon$), there exists $c=c(\delta, \epsilon, \eta_0)$ with the following property.  Let $X$ be a $\delta$-expander surface.  
    Let $B_0,B_1\subset T^1X$ be $\eta$ flow boxes, where $\eta_0/1000 \le \eta \le \eta_0$, and such that $B_0^+,B_1^+$ are embedded.    
    Let $t\ge 0$, and let  $P \subset B_0$ be a subbox that is full width in the geodesic flow and contracting directions, and width $e^{-t}\eta$ in the expanding direction.    Then if $T\ge t + c\log g$, the number of components of intersection satisfies 
     \[
    \comp (P \cap g_{-T}B_1) \leq O(1) e^{T-t} \mu(B_1).
    \]
\end{lemma}

\begin{proof}
    We first apply \Cref{lem:band-future}, which gives a bound on the measure of the set $P \cap g_{-T}B_1$.  Then to get from this to a bound on the number of components of the set, we divide by the average measure of each component. To understand the shape of the components (and thus their typical measure), we argue as in proof of \Cref{lem:effect-pgt-upper} (see also \Cref{lem:band-mix-aux} and its proof, since here we are starting with a subbox $P$, rather than the full box $B_0$).

\end{proof}

\subsection{Effective multiple mixing}

We now prove effective multiple mixing for any finite number $k$ of flow boxes.  The result follows from effective mixing and the expansion/contraction (Anosov) property of the geodesic flow.  Note the error term below becomes bad as $k$ increases; we only use the result for small $k$.  

\begin{thm}[Effective multiple mixing]
  \label{thm:effect-mult-mix} 
  Fix $\delta,\epsilon,\eta_0 >0$.  Then there exists some $c=c(\delta,\epsilon,\eta_0)>0$ with the following property. Let $X$ be a $\delta$-expander surface of genus $g$, and $\eta$ such that $\eta_0/1000 \le \eta \le \eta_0$.  Let $B_1,B_2,\ldots \subset T^1X$ be $\eta$ flow boxes such that each $B_i^+$ is embedded. Let $t_1,t_2,\ldots \ge 0$.  Define
  \begin{align*}
    M_k:=\{ v: g_{t_1}v \in B_1, \ldots, g_{t_k}v \in B_k \}.
  \end{align*}
Then if $t_{i}-t_{i-1} \ge c \log g$ for each $i\ge 2$, we have 
\begin{align}
  \mu(B)^k (1 - \epsilon)^k  \le \mu\left( M_k \right) \le \mu(B)^k (1+ \epsilon)^k. \label{eq:mult-mix}
\end{align}
(Here $B$ is any of the flow boxes, which all have the same $\mu$ measure.) 
\end{thm}

\begin{figure}[ht]
 \centering 
 \includegraphics [scale=0.33]{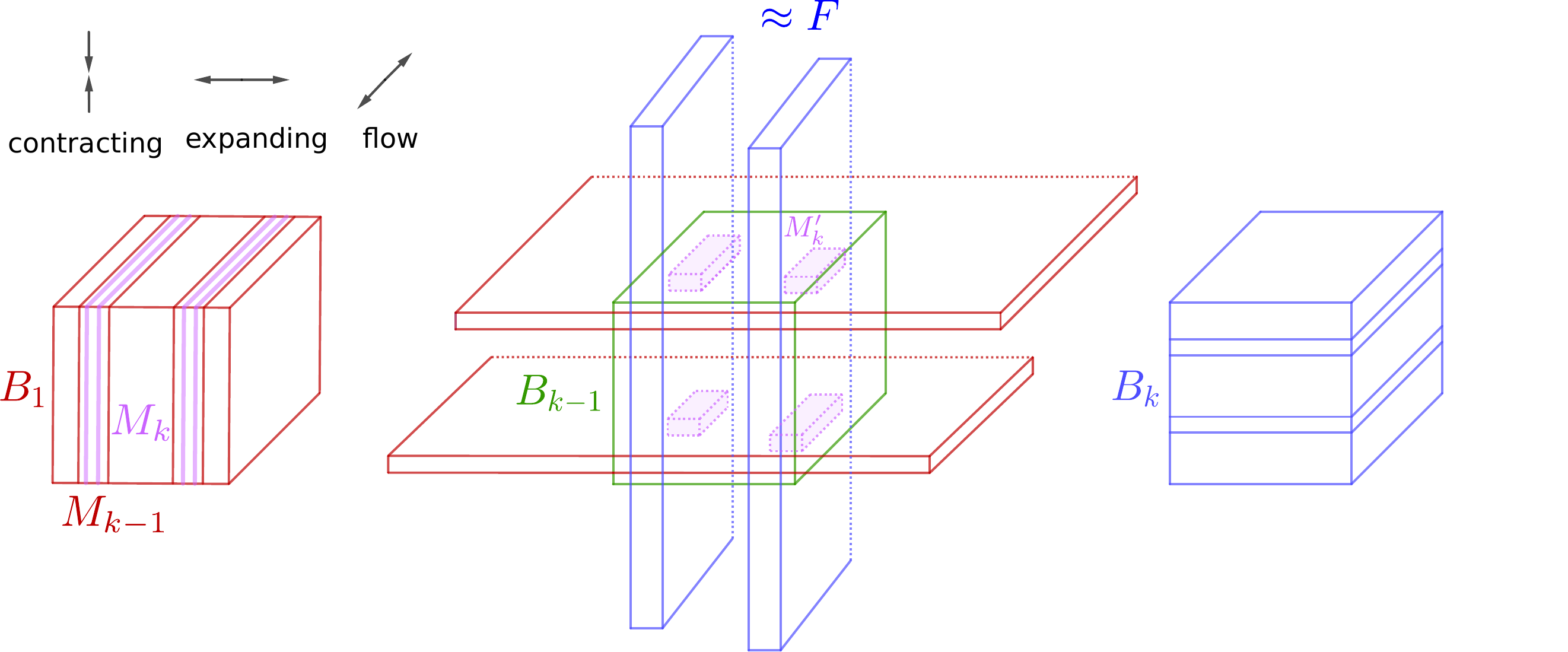}
 \caption{The geometric mechanism that allows one to prove (effective) \emph{multiple} mixing, using mixing.  In the middle box, the red components, coming from a condition on the past, are ``perpendicular'' to the blue components, which come from a condition on the future (compare this middle box to \Cref{fig:band_mix}).  This is the analog in the hyperbolic dynamics setting of the Markov property for random walks. }
 \label{fig:nfold_mix}
\end{figure}

 We will first deal with the upper bound in the statement of \Cref{thm:effect-mult-mix}.  

\begin{lemma}
  \label{lem:mix-lower}
Suppose the same setup as in \Cref{thm:effect-mult-mix} and that $t_1=0$. Then there exist sets $\bar M_1, \bar M_2, \ldots \subset B_1 \subset T^1X$ such that for each $k$
\begin{enumerate}[(i)]
\item $\bar M_k \subset M_k$, \label{item:contain}
\item every component of $\bar M_k$ has full width in the contracting direction (relative to $B_1$) and width $e^{-t_k}\eta$ in the expanding direction,  \label{item:comp-shape} 
\item $\mu(\bar M_k) \ge  \mu(B)^k (1+\epsilon)^k$. \label{item:measure}
\end{enumerate}
\end{lemma}

\begin{proof}
    We will prove the result by induction on $k$.  For $k=1$, we take $\bar M_k :=M_k$, and the properties trivially hold.
    
    So assume we have already constructed $\bar M_{k-1}$.  

    Let $F$ be the set given by applying \Cref{lem:band-mix-aux} with the $\eta$ flow boxes $B_{k-1},B_k$ and $T=t_k-t_{k-1}.$   These sets, as well as those defined below, are illustrated in \Cref{fig:nfold_mix}.

Now let
$$\bar M_k':=\left( g_{t_{k-1}}\bar M_{k-1} \right) \cap F,$$
$$\bar M_k : = g_{-t_{k-1}}\bar M_k'. $$ %

By the inductive hypothesis part \eqref{item:contain}, we have $\bar M_{k-1} \subset M_{k-1}$.   By \Cref{lem:band-mix-aux} \eqref{item:contain}, we have $F\subset B_{k-1} \cap g_{-(t_k-t_{k-1})}B_k\subset g_{-(t_k-t_{k-1})}B_k$,
hence $g_{-t_{k-1}}F \subset g_{-t_k}B_k$.  Then using these observations, and from the definitions immediately above, we see 
\begin{align*}
    \bar M_k = \bar M_{k-1} \cap g_{-t_{k-1}}F \subset M_{k-1} \cap g_{-t_k}B_k = M_k,
\end{align*}
giving \eqref{item:contain}.  

By the inductive hypothesis part \eqref{item:comp-shape}, we get that each component $\mathcal C$ of $g_{t_{k-1}}\bar M_{k-1}$ has full width in expanding direction, and width
$\ge e^{-t_{k-1}}\eta$ in contracting direction.  By \Cref{lem:band-mix-aux} \eqref{item:Fcomp}, each component of $F$ has width $\ge e^{-(t_k-t_{k-1})}$ in the expanding direction and full width in the contracting direction. Hence components of the intersection $g_{t_{k-1}}\bar M_{k-1}\cap F$ have width $\ge e^{-(t_k-t_{k-1})}\eta$ in the expanding direction, and width $\ge e^{-t_{k-1}}\eta$ in the contracting direction (the situation is similar to \Cref{fig:band_mix}).
Applying $g_{-t_{k-1}}$ gives the desired statement about the structure of each component of $\bar M_k=\bar M_{k-1} \cap g_{-t_{k-1}}F$, 
so we have established \eqref{item:comp-shape}.  

Now for each such component $\mathcal C$, \Cref{lem:band-mix-aux} \eqref{item:muPcapF} with $P=\mathcal C$, yields
\begin{align*}
    \mu(\mathcal C \cap F) = (1+O(\epsilon)) \cdot \mu(\mathcal C) \mu(B_k),
\end{align*}
where the implicit constant in the $O(\cdot)$ is less than $1$ (in the above, and also in the below).  

So
\begin{align*}
  \mu(\bar M_k) & = \mu(\bar M_k') \\
                &=\left( g_{t_{k-1}}\bar M_{k-1} \right) \cap F \\
                &=\sum_{\mathcal C} \mu (\mathcal C \cap F) \\
                &=\sum_{\mathcal C} (1+O(\epsilon)) \cdot \mu(\mathcal C) \mu(B_k) \\
                &=(1+O(\epsilon)) \cdot \mu(B_k)\sum_{\mathcal C}  \mu(\mathcal C) \\
                &=(1+O(\epsilon)) \cdot \mu(B_k)\mu \left( g_{t_{k-1}}\bar M_{k-1} \right)  \\
                &=(1+O(\epsilon)) \cdot \mu(B_k)\mu \left(\bar M_{k-1} \right)  \\
                &\ge (1+O(\epsilon)) \cdot \mu(B_k)\left( \mu(B)^{k-1}(1+\epsilon)^{k-1} \right)  &&\text{ \ (by induction, using \eqref{item:measure})}\\ 
                &= \mu(B)^k (1+O(\epsilon))^k,
\end{align*}
which establishes the desired measure bound \eqref{item:measure}.  
\end{proof}

\begin{proof}[Proof of \Cref{thm:effect-mult-mix}]
  The lower bound follows from \Cref{lem:mix-lower} (that assumes $t_1=0$, but we can reduce to this case by applying $g_{-t_1}$, which preserves the measure).  The proof of the upper bound is analogous, using an upper bound analog of \Cref{lem:mix-lower}.  
\end{proof}

We will also use the below variant of effective multiple mixing where each of the target sets can be either a flow box or its \textit{complement}. 

\begin{lemma}
  \label{lem:mix-complement}
  Fix $\delta,\epsilon,\eta_0 >0$.  There exists some $c=c(\delta,\epsilon,\eta_0)>0$ with the following property. Let $X$ be a $\delta$-expander surface of genus $g$, and $\eta$ such that $\eta_0/1000 \le \eta \le \eta_0$.
  Let $B_1,\ldots,B_k\subset T^1X$ be $\eta$ flow boxes, such that each $B_i^+$ is embedded.  For each $i$, let $E_i$ be either $B_i$, or the complement in $T^1X$ of $B_i$, and let 
  \[
   m_i = \left \{ 
   \begin{array}{lll}
   (1+\epsilon)\mu(B_i) & \text{ if } & E_i = B_i, \\ 
   1 - (1-\epsilon)\mu(B_i) & \text{ if } & E_i = T^1X - B_i .\end{array}
   \right .
  \]
  Then 
\begin{align*}
  \mu\left(\{ v: g_{t_i}v \in E_i \text{, for } i=1,\ldots,k \}\right) \le m_1 \cdots m_k
\end{align*}
for any $t_1<\cdots<t_k$ with $t_i-t_{i-1}\ge c \log g$ for each $i\ge 2$.
\end{lemma}

\begin{proof}
  The proof is similar to that of \Cref{thm:effect-mult-mix}. To ensure the complementary components are thick enough, we use slightly contracted flow boxes.
\end{proof}

\section{Simple geodesics}
\label{sec:simp-conj}

In this section we will get an upper bound on simple geodesics $N_{simp}(X,L)$, which will allow us to prove \Cref{thm:simplicity}.  

\subsection{An analogous probability problem}
\label{sec:prob-problem}
In this subsection we solve a problem in basic discrete probability that will serve as an analog of the problem of bounding the number of simple geodesics on expander hyperbolic surfaces.  Although this subsection is, strictly speaking, purely motivational, the reader will find it useful to study before moving on to the hyperbolic setting, where the technical issues are much more formidable.  

The heuristic for \Cref{thm:simplicity} derives from analysis of the ``birthday paradox''.   This involves picking $k$ objects from a collection of $n$, with replacement.  The question is: how large does $k$ need to be to guarantee the chance of getting at least one object more than once is high?  The transition occurs near $k=\sqrt{n}$.

In our situation we have to discretize our continuous space.  We will want the relevant subsets (which will be flow boxes) to be disjoint. 
They will not actually cover the whole space, but rather some definite fraction of it (the ``good'' objects below are the ones corresponding to these disjoint subsets).  In order to prove a geodesic self-intersects, it is \emph{not} enough to show that it comes back close to where it has been previously.  It will be enough to show that it comes back close, \emph{and at a definite angle} (i.e. ``transversely'').  

We incorporate these two differences from the ``birthday'' situation into a modified probability problem, which we  then solve.  Our proof of the \Cref{thm:simplicity} will then be an analog of this, but in the context of hyperbolic dynamics, which, although deterministic, behaves much like a random system.

\begin{prop}
  \label{prop:prob-problem}
  Fix $\alpha$ with $0<\alpha <1/3$.  Let $x_1,\ldots,x_{\ell}$ be samples from a collection $S$ of $n$ distinct objects.  The samples are chosen independently, uniformly at random, and with replacement.  We are additionally given a subset $G\subset S$, the ``good'' objects, which has size at least $\alpha \cdot n$, together with an injective map $T: G\to S$, the ``transverse object'' map.   

  Then
  \begin{align*}
  p: = \P [\not\exists (i,j) \text{ with } x_i\in G \text{ and } x_j = T(x_i)]     \to 0 
  \end{align*}
as $n\to\infty$, provided that $\ell\succ \sqrt{n}$.   
\end{prop}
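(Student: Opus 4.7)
The plan is to apply the second moment method to the count of ``good collisions.'' Define
\begin{align*}
Z := \#\{(i,j) : 1 \le i, j \le \ell,\ i \ne j,\ x_i \in G,\ x_j = T(x_i)\},
\end{align*}
so that $p = \P[Z = 0]$. By Chebyshev's inequality, $\P[Z=0] \le \var(Z)/\E[Z]^2$, so it will suffice to prove that $\E[Z] \to \infty$ and that $\var(Z) = o(\E[Z]^2)$. The first moment is immediate from independence and uniformity: for each ordered pair $(i,j)$ with $i\ne j$, $\P[x_i \in G,\ x_j = T(x_i)] = |G|/n^2$, so $\E[Z] = \ell(\ell-1)|G|/n^2 \ge \alpha\,\ell(\ell-1)/n$, which tends to $\infty$ under the hypothesis $\ell \succ \sqrt n$.

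For the variance, I would expand $\E[Z^2]$ as a sum over ordered pairs of index pairs $((i,j),(i',j'))$, grouped by their overlap pattern. The disjoint case, involving four independent samples, contributes $(1+o(1))\E[Z]^2$. The coincident case $(i,j)=(i',j')$ contributes $\E[Z]$. Of the ``one-overlap'' cases, the key one is $j=j'$ with $i\ne i'$: the event requires $T(x_i)=T(x_{i'})$, and injectivity of $T$ forces $x_i=x_{i'}$, so the joint probability per tuple is $|G|/n^3$ and the total contribution is $O(\ell^3/n^2)$. The case $i=i'$, $j\ne j'$ gives the same bound by direct computation (conditioning on $x_i$ makes $x_j$ and $x_{j'}$ independent with probability $1/n$ each). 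The ``crossed'' cases $i=j'$ and $j=i'$ contribute only $O(\ell^3/n^3)$. Assembling,
\begin{align*}
\var(Z) \;\le\; \E[Z] \;+\; O(\ell^3/n^2) \;+\; o(\E[Z]^2).
\end{align*}
Since $\E[Z]^2 \asymp \ell^4/n^2$, one sees $\E[Z]/\E[Z]^2 = O(n/\ell^2) \to 0$ (precisely the hypothesis $\ell \succ \sqrt n$) and $(\ell^3/n^2)/\E[Z]^2 = O(1/\ell) \to 0$, so $\var(Z) = o(\E[Z]^2)$ as needed.

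The only step that is not pure bookkeeping is the $j=j'$ case, where the injectivity hypothesis on $T$ enters essentially. Without it, a single image point could absorb many preimages and inflate the second moment by a factor of $n$, breaking the variance bound. This is the combinatorial shadow of the point stressed in Section~\ref{sec:plan}: in the hyperbolic setting, it will not be enough for a geodesic to merely return close to where it has been, since near-parallel strands can coexist without intersecting; one needs a \emph{transverse} return, and this transversality condition is precisely what upgrades a ``collision'' in the toy model to a genuine self-intersection on the surface.
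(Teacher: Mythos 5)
Your proof is correct, but it takes a genuinely different route from the paper's, and it is worth understanding why the paper chose otherwise.

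You apply the second moment method directly to the collision count $Z$, invoking Chebyshev in the form $\P[Z=0]\le \var(Z)/\E[Z]^2$. The bookkeeping is essentially right: the disjoint case contributes $(1-O(1/\ell))\E[Z]^2$, which is at most $\E[Z]^2$ and hence drops out of the variance; the coincident case gives $\E[Z]$; the $j=j'$ case is tamed by injectivity of $T$ exactly as you observe. One small slip: you assert the crossed cases contribute $O(\ell^3/n^3)$, but the three-index crossed case ($i=j'$, all else distinct) actually contributes $O(\ell^3\,|G|/n^3)=O(\ell^3/n^2)$, and the two-index fully crossed case ($i=j'$ and $j=i'$) contributes $O(\ell^2\,|G|/n^2)=O(\ell^2/n)$. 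Neither correction matters, since both are still $o(\E[Z]^2)=o(\ell^4/n^2)$ under $\ell\succ\sqrt n$, so the conclusion stands. One other nitpick: as written, $Z$ counts ordered pairs with $i\ne j$, so strictly $p\le\P[Z=0]$ rather than $p=\P[Z=0]$ (unless $T$ has no fixed points in $G$); the inequality suffices.

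The paper proceeds differently: it truncates to $k=\min(\ell,n^{2/3})$ samples and then splits the bad event as $p\le r+q$, where $r$ is the probability that too few distinct good objects are hit among an initial block (itself split into $r_0$, a second-moment bound, and $r_1$, a first-moment bound on repeat collisions), and $q$ is the conditional probability that later samples avoid all the transverse objects, estimated by the classical birthday calculation. This two-stage ``past/future'' decomposition is not the cleanest proof of \Cref{prop:prob-problem} as a self-contained probability fact --- yours is --- but it is chosen deliberately because it is the decomposition that survives transport to the hyperbolic setting: the sets $R$, $R_0$, $R_1$, $Q_k$ of \Cref{sec:R}--\Cref{sec:Q} correspond exactly to the events with probabilities $r_0$, $r_1$, $q$. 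By contrast, your direct second-moment approach would, in the geometric version, require the ``variance of the collision count'' to be controlled to $o(1)$ relative precision; but the effective multiple mixing of \Cref{thm:effect-mult-mix} only gives factors $(1\pm\epsilon)^k$, so the off-diagonal terms in $\E[Z^2]$ would carry a fixed $O(\epsilon)$ relative error that does not vanish. The paper's approach only needs one-sided upper bounds of the form $\mu(\bar S)\le O(\epsilon)\mu(B)$, which the mixing machinery can deliver. So: correct and cleaner for the toy problem, but the paper's redundant-looking decomposition is the one that generalizes.

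Your closing remark about injectivity being the combinatorial shadow of transversality is exactly the right reading of why the hypothesis is there.
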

Note that the map $T: G \to S$ does not have to map ``good" objects to other ``good" objects, i.e. $x_i \in G$ need not imply $T(x_i) \in G$. All we require is that the map $T: G \to S$ is injective.
\begin{proof}
 We wish to bound the probability that, by some time $l$, we never choose an object $g \in G$ and its ``transverse" $T(g)$. Our strategy is to consider two cases.  We look at the number of distinct good objects that we get among the earlier of the choices of the $x_i$.  The first case is that this number is relatively small; we show that the probability of this event is low (by breaking into two further sub-cases).  The second case is when this number is relatively large; on this event we show that it is then unlikely that none of the later choices of objects hits a transverse to one of the good earlier choices.

 Now to begin the proof, we set $k:=\min(\ell,n^{2/3})$. 
  \begin{enumerate}
  \item Let
    \begin{align*}
          r := \P\left[\#\left(\{x_1,\ldots,x_{\lfloor (1-\alpha)k \rfloor}\} \cap G\right) \le \alpha k/4 \right].
    \end{align*}
    This is the probability that few good distinct objects are hit among the early choices.
  \item Let
    \begin{align*}
      q :=  \P [ &\#\left(\{x_1,\ldots,x_{\lfloor (1-\alpha)k \rfloor}\} \cap G\right) > \alpha k/4  \\
                 & \text{ and } x_{\lfloor (1-\alpha)k \rfloor+1},\ldots, x_k \not\in T(\{x_1,\ldots, x_{\lfloor (1-\alpha)k \rfloor}\} \cap G)  ]. 
    \end{align*}
This is the probability that many good distinct objects are hit among the early choices, and none of the later choices hits a transverse to one of the good earlier choices.  
  \end{enumerate}

    The event associated with $p$ is contained in the union of the events associated with $r$ and $q$.  Hence, by the union bound, we have 
    \begin{align}
          p\le r+q. \label{eq:p-union}
    \end{align}
    So our goal now is to show $r$ and $q$ each tend to $0$. 

  To bound $r$, we will define further probabilities based on two cases:
  \begin{enumerate}[(A)]
  \item Let
    \begin{align*}
      r^f := \P\left[ \#\{ i: 1\le i \le (1-\alpha)k, \ x_i \in G\} \le \alpha k/2\right],
    \end{align*}
    the probability that few of the early choices hit good objects.  
  \item Let
    \begin{align*}
      r^m := \P[  &\#\{ i: 1\le i \le (1-\alpha)k, \ x_i \in G\} > \alpha k/2\\
                  &\text{ and } \#\left(\{x_1,\ldots,x_{\lfloor (1-\alpha)k \rfloor}\} \cap G\right) \le \alpha k/4 ],
    \end{align*}
    the probability that many of the early choices hit good objects, but among these there are not many distinct objects hit. 
  \end{enumerate}

  Note that, by a union bound, 
  \begin{align}
      r\le r^f + r^m.  \label{eq:r-union}
  \end{align}

 \medskip
 \noindent \underline{Bounding $r^f$}:
 We use the second moment method.  Let $X_i$ be the indicator random variable of the event that $x_i\in G$.  Let $X=\sum_{i\le (1-\alpha)k}X_i$, so $r^f= \P[X\le \alpha k/2]$.  We first compute the expected value of $X$.  Note that $\E[X_i]=\P[x_i \in G] = \alpha$.  So
 \begin{align}
   \E[X] =  \sum_{i\le(1-\alpha)k} \E[X_i] = \alpha (1-\alpha)k > (2/3)\alpha k, \label{eq:EX}
 \end{align}
 using the assumption $\alpha<1/3$.

Now we compute the second moment, using independence of the $X_i$:
\begin{align*}
  \var(X)&= \sum_{i,j} \cov(X_i,X_j) = \sum_i \var(X_i) = \sum_i (\alpha - \alpha^2)=(\alpha-\alpha^2)(1-\alpha)k\\
         & \le \alpha k.
\end{align*}

Then using the above and \eqref{eq:EX} with Chebyshev's inequality, we get:
\begin{align*}
  r^f &=\P[X\le \alpha k/2] \le \P[|X-\E X|>\alpha k/6] \\%
  &\le \frac{\var(X)}{(\alpha k/6)^2} \le \frac{\alpha k}{(\alpha k/6)^2} = \frac{36}{\alpha } \frac 1 k
\end{align*}

and hence $r^f\to 0$ as $k\to \infty$ (which must happen when $n\to\infty$, since $k\succ \sqrt n$).  

  \medskip

   \medskip
 \noindent \underline{Bounding $r^m$}: We use the first moment method.  

Let $Y_{i,j}$ be the indicator of the event $x_i=x_j$, and $Y=\sum_{i<j}Y_{i,j}$.  Note that on the event defining $r^m$, we must have $Y\ge \alpha k/2-\alpha k/4 = \alpha k/2$, since there are least this many values of $j$ such that $x_j\in G$ and $x_i=x_j$ for some value of $i<j$.  Then by Markov's inequality, we get
\begin{align*}
  r^m \le P[Y\ge \alpha k/2] \le \frac{\E[Y]}{\alpha k/2} \le \frac{k^2/n}{\alpha k/2} = \frac{2}{\alpha} \frac{k}{n}, 
\end{align*}
which, since $k \le n^{2/3}$ , goes to $0$ as $n\to\infty$.  
 
 \medskip

    \medskip
    \noindent \underline{Bounding $q$}:  Consider the conditional probability 
    \begin{align*}
      q' :=  \P[ &x_{\lfloor (1-\alpha)k \rfloor+1},\ldots, x_k \not\in T(\{x_1,\ldots, x_{\lfloor (1-\alpha)k \rfloor}\} \cap G)  \\
      &| \  \#\left(\{x_1,\ldots,x_{\lfloor (1-\alpha)k \rfloor}\} \cap G\right) > \alpha k/4 ].
    \end{align*}
    Note that $q\le q'$, so for our purposes it will suffice to bound $q'$.  Since $T$ is injective, the condition in the probability expression above  
    implies that $T\left (\{x_1,\ldots, x_{\lfloor (1-\alpha)k \rfloor}\} \cap G \right)$ has at least $\alpha k/4$ elements.  So, ala the birthday paradox, we compute the probability that $x_{\lfloor (1-\alpha)k \rfloor+1},\ldots, x_k$ all avoid these $\alpha k/4$ objects (notice that these later choices are independent of those involved in the condition), giving
    \begin{align*}
      q' &\le \left(1-\frac{\alpha k/4}{n}\right)^{\alpha k-1} \\
      &\approx \exp\left(-\frac{\alpha k/4}{n}\right)^{\alpha k-1} \\
      & = \exp(-\Omega(k^2/n)),
    \end{align*}
    (where we have used that for $n$ large, $1-\frac{\alpha k/4}{n} \ge 0$, since $k\le n^{2/3}$).
  The last term goes to $0$ as $n\to\infty$, and hence so does $q$. 

\medskip

    \medskip
    \noindent \underline{Completing the proof}:
    Putting together \eqref{eq:p-union}, \eqref{eq:r-union}, and the results from the above three cases, we get that
    $$p=r+q\le (r^f+r^m) + q \to 0.$$

\end{proof}

\subsection{Flow boxes for proof of \Cref{thm:simplicity}}
\label{sec:setup}

We now begin the process of transferring the proof in \Cref{sec:prob-problem} to the setting of expander hyperbolic surfaces.  

\paragraph{Properties of the flow boxes.}

  Recall from \Cref{sec:effect-mix} the various definitions associated with flow boxes.   In what follows, we will find a collection of disjoint flow boxes that cover a definite proportion of the surface.  Additionally, each box $B$ is paired with a ``transverse'' box $\hat B$ such that a geodesic crossing through $B$ and $\hat B$ is guaranteed to self-intersect transversely.

  For each $v\in T^1X$, we define the rotated vector $\hat v := r_{\pi/2}v$.  If $B=B(v)$ is an $\eta$ flow box centered at $v$, we define the \emph{transverse box} $\hat B$ to be the $\eta$ flow box centered at $\hat v$.

  \begin{prop}
    \label{prop:flow-boxes}
    Fix $s_0>0$.  Given $\eta_0$ small (depending on $s_0$), there exists $\alpha=\alpha(s_0,\eta_0)>0$ with the following property.  For any hyperbolic surface $X$ with systole at least $s_0$, there exists $v_1,\ldots,v_{2g-2} \in T^1X$ such that for all $\eta$ with $\eta_0/1000 \le \eta \le \eta_0$, 
    the $\eta$ flow boxes $B_i:=B(v_i)$ and $\hat B_i:=B(\hat v_i)$ are embedded in $T^1X$ and satisfy 

    \begin{enumerate}[(i)]
    \item  \label{item:enough-measure}$\mu\left(\cup_i B_i\right) > \alpha,$
      
    \item  \label{item:pair-disj} the $B_1,\ldots,B_{2g-2},\hat B_1,\ldots, \hat B_{2g-2}$ are pairwise disjoint,

          \item iff $t_1,t_2\in \R$ and $v$ is such that $g_{t_1}v\in B_i$ and $g_{t_2}v\in \hat B_i$, then the geodesic $t\mapsto g_tv$ has a transverse self-intersection, \label{item:self-inter} %
      
    \item  \label{item:full-box-sep}``full box separated'' i.e. for any $w\in T^1X-\bigcup_{i=1}^{2g-2} \hat B_i^+$, the  $\eta$ flow box $B(w)$ satisfies $B(w) \subset T^1X-\bigcup_{i=1}^{2g-2} \hat B_i$. (This technical condition, which will be used to handle edge effect components, means that there aren't thin regions in between our transverse flow boxes.)
    \end{enumerate}
  \end{prop}
  
   \begin{figure}[h!]
    \centering
    \includegraphics[scale=1.2]{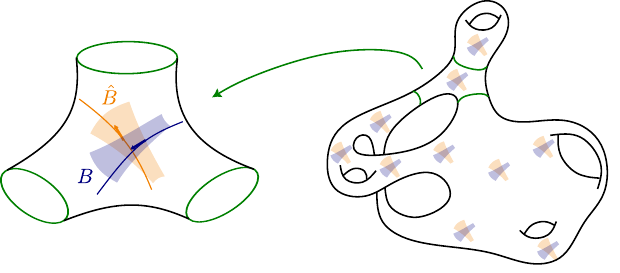}
    \caption{Transverse flow boxes guaranteeing a self-intersection. In (the unit tangent bundle over) every pair of pants, we can fit a pair of transverse flow boxes $B,\hat B$ of definite size.  Any geodesic that lands in both $B$ and $\hat B$ must have a self-intersection.  Thus a simple geodesic cannot hit both boxes in any such pair.  We use this condition to bound the number of simple closed geodesics of length at most $L$. }
    \label{fig:transverse}
  \end{figure}

  The proof of this proposition will depend on the following observation giving that any surface has many points where the injectivity radius is bounded below by a uniform constant.

  \begin{lemma}
  \label{lem:embedded-disc}
  There is a universal constant $r_0>0$ so that any hyperbolic pair of pants $P$ with geodesic boundary contains an embedded disc of radius $r_0$.
\end{lemma}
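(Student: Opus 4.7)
The plan is to reduce to the classical geometry of right-angled hexagons, which are the building blocks of pairs of pants. Any hyperbolic pair of pants $P$ with geodesic boundary can be cut along its three \emph{seams} (the unique shortest orthogeodesic arcs between each pair of boundary components) into two congruent right-angled hexagons $H, H'$. Since the interior of each hexagon embeds isometrically into $P$, it suffices to find, inside a right-angled hexagon $H$, a point whose distance to $\partial H$ is at least some universal $r_0>0$: a ball of that radius around such a point will embed in $P$, because the only essential closed curves in $P$ are peripheral, so the sole obstruction to embeddedness is proximity to $\partial P$.

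Parametrize right-angled hexagons by the three seam lengths $(b_1,b_2,b_3)\in(0,\infty)^3$, with the three ``half-boundary'' lengths $a_i=\ell_i/2$ determined by the hyperbolic hexagon cosine law
\begin{align*}
  \cosh a_i = \frac{\cosh b_i + \cosh b_j \cosh b_k}{\sinh b_j \sinh b_k}.
\end{align*}
The inradius of $H$ is continuous and strictly positive on this three-dimensional moduli space, so by compactness it is bounded below on any compact subset. The content of the lemma lies in analyzing the ends of moduli space, which I would handle by a case split on the boundary lengths $\ell_i$. If some $\ell_i$ is very small, the standard collar around $\ell_i$ has width $\sim\log(1/\ell_i)$; at depth $\operatorname{arccosh}(1/\ell_i)$ inside this collar the equidistant circle has circumference exactly $1$, and a ball of radius $1/4$ centered on such a circle embeds in the collar, hence in $P$. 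If instead all $\ell_i$ are bounded below by some universal $\epsilon_0$, the hexagon relations constrain the seams: whenever two of the $\ell_i$ are bounded, the seam connecting them stays bounded, and one can place a uniform ball at its midpoint (which is at definite distance from both of those boundaries and far from the remaining structure). In the extremal regime where all three $\ell_i\to\infty$ simultaneously, all seams $b_i\to 0$ and $H$ converges to a hyperbolic ideal triangle of area $\pi$; the canonical inscribed disc of the ideal triangle then yields, by continuity, a uniform ball inside $H$ for all sufficiently large $\ell_i$.

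The main obstacle is that the moduli space of hexagons is non-compact in three independent directions, producing several distinct degenerations that must each be handled on their own terms. The stabilizing geometric fact throughout is Gauss--Bonnet: every right-angled hyperbolic hexagon has area exactly $\pi$, so $H$ cannot be uniformly thin---wherever it collapses in one region, it must bulge in another, and that bulge is precisely where I expect to locate the embedded disc. A minor subtlety to keep in mind is that one needs distance from \emph{all six} sides of $H$, not just the three half-boundary sides, since a disc that spills across a seam might overlap its image in the second hexagon when reassembled into $P$; requiring the center to lie uniformly deep inside $H$ sidesteps this issue.
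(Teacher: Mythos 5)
Your overall strategy is correct in its first steps (cut $P$ along seams into two congruent right-angled hexagons, invoke Gauss--Bonnet to get area $\pi$, find a point uniformly deep in the hexagon), and this coincides with the opening of the paper's proof. But you and the paper diverge sharply after that. The paper subdivides the hexagon $H$ into four triangles, applies pigeonhole to extract a triangle $T$ of area $\ge \pi/4$, and then uses the compactness of the set of isometry types of hyperbolic triangles of area $\ge \pi/4$ (allowing ideal vertices, parametrized by angle triples with angle sum $\le 3\pi/4$) to get a uniform inradius $r(\pi/4) > 0$ for $T$. This converts the non-compact three-parameter problem into a compact one in a single step, with no case analysis. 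Your route instead works directly with the non-compact moduli of hexagons and handles the degenerations one at a time via collars and ideal-triangle limits.

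Your plan is heavier, and in the present form it has a gap in the middle regime. Your trichotomy (some $\ell_i$ small / all $\ell_i$ bounded below / all $\ell_i\to\infty$) is not exhaustive: it omits mixed degenerations such as $\ell_1,\ell_2\to\infty$ with $\ell_3$ bounded. In that regime the claim that ``whenever two of the $\ell_i$ are bounded, the seam connecting them stays bounded'' does not apply, and in fact the seam between the two large boundaries shrinks to zero, since $\cosh b_3 = \dfrac{\cosh a_3 + \cosh a_1\cosh a_2}{\sinh a_1\sinh a_2}\to 1$. Centering a ball at the midpoint of a seam therefore requires a more careful choice of which seam to use, and a proof that its midpoint is far from all six sides of $H$, not just the two boundary sides it joins. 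None of this is unfixable, but it requires real work --- precisely the work the paper's triangle-and-compactness step lets you skip.
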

  \begin{proof}
    We first decompose our pair of pants $P$ into two isometric right-angled hexagons. Let $H$ be one of these hexagons. By the Gauss-Bonnet formula, the area of a $H$ is $\pi$. We can cut $H$ into the union of 4 triangles.  See \Cref{fig:hexagon}. One of these triangles, denoted $T$, must have area at least $\pi/4$.  Every hyperbolic triangle of area at least $A$ contains an embedded disc of radius $r(A)$, for some function $r$.  This follows from compactness of the set of isometry types of such triangles (allowing ideal vertices), since hyperbolic triangles are determined up to isometry by their angles, and the area bound implies the angle sum is bounded from above away from $\pi$.  So we take $r_0=r(\pi/4)$.  

    (Alternatively, we can prove the lemma by using the fact that any hyperbolic pair of pants can be cut into $2$ ideal triangles; we take an embedded disc in one of these triangles.) 

  \end{proof}

  \begin{figure}%
    \centering
    \includegraphics{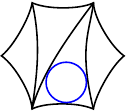}
    \caption{Every right-angled hyperbolic hexagon contains a disc of definite area.}
    \label{fig:hexagon}
  \end{figure}
  
  With this, we can prove the proposition.
  \begin{proof}[Proof of Proposition \ref{prop:flow-boxes}]
  Let $X$ be any hyperbolic surface of genus $g$. Take a pants decomposition $P_1, \dots, P_{2g-2}$ of $X$. By \Cref{lem:embedded-disc}, there exists an $r_0 > 0$ independent of $X$ or the choice of its pants decomposition, so that we can fit a disc $D_i$ of radius $r_0$ inside each pair of pants $P_i$. As the pairs of pants have disjoint interiors, these discs will be pairwise disjoint, as well.
  
  We will find our collection of flow boxes $B_1, \dots, B_{2g-2}, \hat B_1, \dots, \hat B_{2g-2}$ in the unit tangent bundle above these discs. Let $\pi: T^1X \to X$ be the usual projection. We observe that there is some constant $\eta_0$ so that for all $\eta < \eta_0$, if $B$ is an $\eta$ flow box, and $\pi(B) \subset D_i$, then $B$ is embedded in $T^1 X$. In fact, for all $\eta$ small enough, a lift of $B$ will be embedded in the universal cover $T^1 \mathbb{H}^2$, and since $D_i$ is embedded in $X$, then $B$ will be embedded in $T^1 X$.
  
  Fix such an $\eta_0$. Make it smaller if necessary, so that $\eta_0 < r_0/100$ (and note that we might retroactively make $\eta_0$ smaller again later in the proof, but when we do this, it will not depend on geometric or topological features other than the systole $s_0$). We will show that the proposition holds for any $\eta$ with $\eta_0/1000 \le \eta \le \eta_0$.

  Let $p_i$ be the center of $D_i$ and take $v_i$ to be any vector in $T^1_{p_i}X$. Taking $\eta_0$ smaller if necessary, we get that the $\eta$ flow box $B_i: = B(v_i)$ is embedded in $T^1 X$, for any $\eta<\eta_0$.  

    Note that if we fix $\eta_0$, and if $\eta_0/1000 \le \eta \le \eta_0$, then there will be some $\alpha$ depending on $\eta_0$ so that
   \[
    \mu(B_i) > \alpha \mu(T^1 P_i)
   \]
   where $T^1P_i = \pi^{-1}P_i$ is the unit tangent bundle of $P_i$ inside $X$.
   As there is a box above each pair of pants, we see that 
   \[
    \mu \left (\bigcup_{i=1}^{2g-2} B_i \right ) > \alpha.
   \]
   Thus, our collection of boxes $B_1, \dots, B_{2g-2}$ satisfies part (\ref{item:enough-measure}).

   Again making $\eta_0$ smaller if necessary, we get that $B_i$ and $\hat B_i:=B_i(\hat v_i)$ are disjoint, since $v_i$ and $\hat v_i:=r_{\pi/2} v_i$ are uniformly separated. %
  This establishes (\ref{item:pair-disj}).  

  Next, if $w \in B_i$ and $\hat w \in \hat B_i$, then $w$ and $\hat w$ are very close to
  to $v_i$ and $\hat v_i$, respectively.  The complete geodesics $\gamma_{v_i}$ and $\gamma_{\hat v_i}$ tangent to $v_i$ and $\hat v_i$, respectively, must intersect at an angle of $\pi/2$.  So then the geodesics $\gamma_w$ and $\gamma_{\hat w}$ tangent to $w$ and $\hat w$ must intersect at an angle close to $\pi/2$, and in particular they intersect transversely for $\eta_0$ small enough, establishing (\ref{item:self-inter}).  
  
  Lastly, we will show that the flow boxes are ``full box separated''. Recall that $\hat B_i^+$ is the $3\eta$ flow box centered around $\hat v_i$.  We can arrange that $\eta_0$ is sufficiently small so that each box $\hat B_i^+$ is still embedded in $T^1 X$.  Let $w \in T^1X - \bigcup_{i=1}^{2g-2} \hat B_i^+$.  We will show that $B(w)$ is disjoint from $\bigcup_{i=1}^{2g-2} \hat B_i$. In fact, let $w' \in B(w)$. Suppose $w' \in \hat B_i$ for some $i$. If the flow boxes were, in fact, Euclidean boxes, then the fact that $w' \in B(w) \cap \hat B_i$ and that these are both $\eta$ flow boxes would mean that $w$ was in the $2\eta$ flow box around $\hat v_i$. But as $\eta$ tends to 0, the flow boxes get close to Euclidean boxes. So choosing $\eta_0$ smaller, if needed, $w' \in B(w) \cap \hat B_i$ implies that $w$ is in the $3\eta$ flow box about $\hat v_i$. In other words, $w \in \hat B_i^+$, which contradicts our assumptions. Thus, our collection of flow boxes satisfies condition (\ref{item:full-box-sep}).
\end{proof}

\subsection{Bound on measure of simple directions}
\label{sec:bound-S}

Fix an $\eta$ flow box $B\subset T^1X$.  We will focus henceforth on bounding from above the number of simple closed geodesics that intersect $B$.  

Let  
  \begin{align*}
    S:=\{v\in B: t\mapsto g_tv \text{ does not self-intersect}\},
  \end{align*}
  i.e. the set of all vectors in $B$ tangent to (not necessarily closed) simple geodesics.  
We will first prove a measure bound on related sets $\bar S_k$ of tangent vectors that generate geodesics lacking particular types of self-intersection. \footnote{Note that the set $S$ itself has measure $0$.  This follows from \cite{bs85}, and can also be deduced from results in this paper.}  Every vector in the complement of $\bar S_k$ corresponds to an arc with a ``robust self-intersection'', which must occur before a certain time, measured by the parameter $k$.  Later we will add the condition that the arcs return to $B$ at time $L$, since ultimately we are interested in counting \textit{closed} geodesics.   %

  The existence and properties of this set $\bar S_k$ are the content of the next lemma.  This lemma is roughly the analog of \Cref{prop:prob-problem}, and its proof will be parallel.

\begin{lemma}
  \label{lem:no-inter}
  Fix $\delta, s_0, \epsilon>0$.  %
  For any $\eta_0$ sufficiently small, there exists $\alpha>0$ and $c_0$ such that for any $c\ge c_0$ the following holds.  Let $X$ be a $\delta$-expander surface of genus $g$ with systole at least $s_0$.   %
  Let $\eta$ be such that $\eta_0/1000 \le \eta \le \eta_0$, and let $B\subset T^1X$ be an $\eta$ flow box.  
  Let $k$ be any positive integer.  
  Then there exists a set $\bar S_k \subset B$ such that
  \begin{enumerate}[(i)]
  \item $\bar S_k\supset S$, \label{item:barSsimp}
  \item \label{item:barSrobust} For any $u\in B-\bar S_k$, there exists some $\eta$ flow box $B_0$ and $t_1,t_2\in [0,kc\log g]$ such that $g_{t_1}u \in B_0^{++ }$ and $g_{t_2}u \in \hat B^-_0$
    (which implies the geodesic segment through $u$ has a ``robust self-intersection''),
  \item  \label{item:barSmeas}
      $\mu(\bar S_k) \le  \mu(B) \left(\epsilon + O(1/k) + O(k/g) + \left\{1-\Omega(1) k\mu(B)\right\}^{\alpha k}\right),$
    
     where $\{x\}$ denotes $\max(x,0)$.  
    Here the implicit constants in the various $O()$ and $\Omega()$ depend only on $\delta,s_0,\epsilon$.

       \end{enumerate}

\end{lemma}

In the remaining parts of this subsection, we will prepare for the proof of the above lemma by constructing the set $\bar S_k$ as a union of sets $R=R^f\sqcup R^m$ and $Q _k$  (paralleling the cases in the proof of \Cref{prop:prob-problem}, whose probabilities are given by $r\le r^f+r^m$ and $q$).  We prove measure bounds along the way, in \Cref{lem:Rf}, \Cref{lem:Rm}, and \Cref{lem:Qjstruc}.  The ingredients will then be assembled into the proof of the lemma in \Cref{sec:proof-no-inter}.

\paragraph{Collection of flow boxes.}
The sets $R$ and $Q_k$ will be defined in terms of different behaviors with regard to a set of flow boxes 
$$\{B_\nu\}_{\nu=1}^{2g-2} = \{B_1,\ldots,B_{2g-2}\},$$ that are the analogs of the ``good" objects in \Cref{prop:prob-problem}.  We take them to be $\eta$ flow boxes in $T^1X$ given by \Cref{prop:flow-boxes}, where $\eta=\eta_0/27$, for the $\eta_0$ in the statement of \Cref{lem:no-inter}.  

\paragraph{Discrete set of times.}
   Instead of considering all times $t$, we pick a sequence of times at which to sample the geodesic segments, separated enough that we can apply mixing of the geodesic flow. 
   
   Let $t_1,t_2,\ldots $ be such that $t_{i+1}-t_i=c\log g$ (where $c$ will be chosen large later, as mentioned in the lemmas below; it is related to mixing time for geodesic flow, and will depend on an error parameter $\epsilon$).  The definitions of the sets $R$ and $Q_k$ will depend on the $t_i$.  The earlier and later times among these will play somewhat different roles when we study self-intersections (just as the earlier and later choices play different roles in proof of \Cref{prop:prob-problem}).  

\subsubsection{$R$: Vectors that hit few distinct boxes $\{B_\nu\}$}
\label{sec:R}

 Our strategy to produce self-intersections is to witness the geodesic segment intersecting some flow box from our collection, and its transverse box (defined in \Cref{sec:setup}).  To apply the birthday paradox reasoning, we will need the geodesic to hit many different flow boxes among the $\{B_\nu\}$.  The set $R$ defined below consists of vectors for which the associated geodesic does \emph{not} have this property; we will show, with a fairly simple argument, that the measure here is small.  
 
For any positive integer $k$, let 
    \begin{align*}
      R = R(k) := \Big\{ v\in B: \# \{\nu: \exists j\le (1-\alpha)k \text{ such that } g_{t_j}v \in B_{\nu}\} < \alpha k/4 \Big\},
    \end{align*}
    i.e. $R$ consists of vectors that do \emph{not} hit at least $\alpha 4/ k$ distinct elements of $\{B_\nu\}$ among times $t_1,\ldots, t_{\lfloor (1-\alpha)k\rfloor}$.

\paragraph{Decomposition $R = R^f\sqcup R^m$.}
We define a set of starting vectors that visit our collection of flow boxes at relatively few times: 
  \begin{align*}
    R^f= R^f(k):= \left\{ v \in B : \#\{i : i\le (1-\alpha)k,  \  \ g_{t_i}v \in \cup_\nu B_\nu\} \le (\alpha/2) k    \right\}.  
  \end{align*}

The complementary set consists of vectors that hit flow boxes at many times, but that still hit relatively few \emph{distinct} flow boxes:
\begin{align*}
  R^m:=R-R^f.  
\end{align*}

We now analyze the two sets $R^f,R^m$ separately.  
\renewcommand\qedsymbol{$\square$} 
  \begin{sublemma}
    \label{lem:Rf}
    For any $\epsilon>0$, there exists $c_0$ such that for any $c\ge c_0$,  
 \[
  \mu(R^f) \le \mu(B)\left( \epsilon + O(1/k)\right).
 \]
\end{sublemma}

\begin{proof}
 For each $i$ with $i\le (1-\alpha)k$, and each $\nu$, define $X_{i \nu}:B\to\R$ by %
 \[
  X_{i \nu}(v) = \begin{cases}
  \begin{matrix} 1 & \text{ if } g_{t_i}v \in B_\nu\\ 
    0 & \text{otherwise.} \end{matrix}
\end{cases}
\]
Since the boxes $B_\nu$ are all disjoint, we have that \[X_i := \sum_{\nu}X_{i \nu}\] is either 0 or 1 for all $i$, and determines whether $g_{t_i}v$ hits any of the boxes $B_1, \dots, B_{2g-2}$ at time $t_i$. Thus, if we set 
\[
 X = \sum_{i\le (1-\alpha)k} X_i,
\]
then $X(v)$ is the number of times $i$ for which $g_{t_i}v$ hits boxes $B_1, \dots, B_{2g-2}$. So we wish to show 
\[
\mu(\left\{ v: X(v) \leq (\alpha/2)k \right\}) = \mu(B)\left( O(\epsilon) + O(1/k)\right).
 \]

 We will use the second moment method. For this, we will first estimate $\frac{1}{\mu(B)} \int_BXd\mu$ and $\frac{1}{\mu(B)} \int_BX^2d\mu$. To estimate $\int_BX $, we just need to estimate $\int_BX_{i\nu}$ for each $i, \nu$.
 We have
\begin{align*}
  \int_BX_{i \nu} & = \mu(\{v\in B: g_{t_i}v \in B_\nu\}) \\
                       &= \mu(B \cap g_{-t_i} (B_\nu)) \\
   & = \mu(B_\nu)^2 \left (1 + O(\epsilon) \right ),
\end{align*}
for $c$ sufficiently large, where the last line is due to effective mixing (\Cref{cor:effect-mix-log}).  

Then summing gives
\begin{align}
  \frac{1}{\mu(B)} \int_BX  &= \frac{1}{\mu(B)} \sum_{i\le (1-\alpha)k}\sum_\nu \int_BX_{i\nu} \\
  & =\sum_{i\le (1-\alpha)k}\sum_\nu \mu(B_\nu) \left (1 + O(\epsilon) \right ) \\
  & = (1-\alpha)k\alpha (1+O(\epsilon)). \label{eq:first-mom}
\end{align}

Now we must estimate $\frac{1}{\mu(B)} \int_BX^2$. Writing $X = \sum_i X_i$, we see that 
\[
 X^2 = \sum_i X_i + \sum_{i \neq j} X_i X_j 
\]
where we use that $X_i$ is always either 0 or 1, and so $(X_i)^2 = X_i$. To estimate $\int_BX_i X_j$, we use that $X_i = \sum_\nu X_{i \nu}$, and so $X_i X_j = \sum_{\nu, \nu'} X_{i \nu} X_{j \nu'}$. Now for $i\ne j$
\begin{align*}
 \int_B X_{i \nu} X_{j \nu'}& = \mu(\{v\in B: g_{t_i} v \in B_\nu, \ g_{t_j}v \in B_{\nu'}\}) \\
  & = \mu(B\cap g_{-t_i}B_\nu \cap g_{-t_j} B_{\nu'}) \\
  & = \mu(B) \mu(B_\nu) \mu(B_{\nu'})\left (  1+ O(\epsilon) \right)^3,
\end{align*}
for $c$ sufficiently large, where the last line comes from effective 3-mixing, i.e. the $k=3$ case of \Cref{thm:effect-mult-mix}.

For $\epsilon$ small enough, $\left (  1+ O(\epsilon) \right)^3 = \left (  1+ O(\epsilon) \right)$. Thus, summing over all $\nu, \nu'$, we get for $i\ne j$
\[
 \frac{1}{\mu(B)} \int_B X_i X_j = \alpha^2\left (  1+ O(\epsilon) \right).
\]
Using this, \eqref{eq:first-mom}, and the fact that there are at most $(1-\alpha)^2k^2$ pairs of $i \neq j$, we see that
\begin{align}
  \frac{1}{\mu(B)} \int_B X^2 &= \frac{1}{\mu(B)} \int_B X + \sum_{i\ne j}\frac{1}{\mu(B)} \int_B X_i X_j \\
                              &\le (1-\alpha)k\alpha (1+O(\epsilon)) + (1-\alpha)^2k^2 \alpha^2(1+O(\epsilon)). \label{eq:second-mom}
\end{align}

Combining the first and second moment bounds, \eqref{eq:first-mom} and \eqref{eq:second-mom}, we estimate

\begin{align*}
  \frac{1}{\mu(B)}& \int_B (X-(1-\alpha)k \alpha)^2 \\
  = & \frac{1}{\mu(B)}\left(\int_B X^2 - 2(1-\alpha)k \alpha \int_BX + \mu(B)(1-\alpha)^2k^2\alpha^2\right) \\
  \le & \left[(1-\alpha)k\alpha (1+O(\epsilon)) + (1-\alpha)^2k^2 \alpha^2(1+O(\epsilon))\right] \\ 
  & -\left[2(1-\alpha)k \alpha \cdot (1-\alpha)k\alpha (1+O(\epsilon)) \right] + \left[(1-\alpha)^2k^2\alpha^2\right]\\
  = & O(k) + O(\epsilon k^2).
\end{align*}

Using this, we apply a Chebyshev bound.   Note that since we can assume that $\alpha$ is small, if $X \le (\alpha/2)k$ then $|X-(1-\alpha)k \alpha| \ge (1/3)k \alpha$.  So:
\begin{align*}
   \frac{1}{\mu(B)} \cdot \mu\left(\{v: X(v) \le (\alpha/2) k \}\right) &\le  \frac{1}{\mu(B)} \cdot \mu\left( \{v: (X(v)-(1-\alpha) k \alpha)^2 \ge (k \alpha/3)^2\}\right) \\
        & \le \frac{\frac{1}{\mu(B)} \int_B (X-(1-\alpha)k \alpha)^2}{(k \alpha/3)^2}\\
        &\le \frac{O(k)+O(\epsilon k^2)}{(k \alpha/3)^2} \\
        & \le O(1/k) +O(\epsilon),
\end{align*}
which gives the desired result.  
\end{proof}

\begin{sublemma}
  \label{lem:Rm}
  There exists $c_0$ such that for $c\ge c_0$, 
  \begin{align*}
    \mu(R^m) \le \mu(B) O(k/g).  
  \end{align*}
\end{sublemma}

\begin{proof}
  We need to show that, given that there are many $i$ such that $g_{t_i}v$ lands in the union of the $\{B_\nu\}$, it is unlikely that only few \emph{distinct} boxes among the $\{B_\nu\}$ are visited.  We measure repeated visits to the same box
  with the function $Y_{i,j}:B\to \R$  given by
  \begin{align*}
    Y_{i,j}(v) =
    \begin{cases}
      1 \quad \text{if } \exists \nu \text{ such that } g_{t_i}v, g_{t_j}v \in B_\nu\\
      0 \quad \text{otherwise}. 
    \end{cases}
  \end{align*}
  Let $Y(v):=\sum_{i<j\le (1-\alpha)k }Y_{i,j}(v)$.  
  Note that if $v\in R^m$ then $Y(v)\ge \alpha k/2 - \alpha k/4 = \alpha k/4$
  since there must be at least this many values of $j<(1-\alpha)k$ such that $g_{t_j}v$ is in some flow box $B_\nu$ and there exists some $i<j$ for which $g_{t_i}v$ is in the same flow box.  Thus
  \begin{align}
    \mu(R^m) \le \mu(\{v \in B: Y(v) \ge \alpha k/4\}). \label{eq:collision}
  \end{align}
  We now use the first moment method to bound the right hand term above.  Define $Y_{i,j,\nu}:B\to \R$ by
    \begin{align*}
    Y_{i,j,\nu}(v) =
    \begin{cases}
      1 \quad \text{if } g_{t_i}v, g_{t_j}v \in B_\nu\\
      0 \quad \text{otherwise},
    \end{cases}
    \end{align*}
    and note that, by disjointness of the $B_\nu$, we have $Y_{i,j} = \sum_\nu Y_{i,j,\nu}$.  Hence by effective multiple mixing, \Cref{thm:effect-mult-mix},
    
  \begin{align*}
    \int_B Y_{i,j} & = \sum_\nu \int_B Y_{i,j,\nu} = \sum_{\nu} \mu(B\cap g_{-t_i}B_\nu \cap g_{-t_j}B_\nu)\\
                   & = \sum_\nu \mu(B)\mu(B_\nu)^2 O(1)\\
                   & = \alpha \mu(B)^2 O(1).   
  \end{align*}
Then
\begin{align*}
  \int_B  Y = \sum_{i<j\le (1-\alpha)k} \int_B Y_{i,j} \le \left((1-\alpha)k\right)^2 \left(\alpha \mu(B)^2 O(1)\right) \le O(1) k^2\mu(B)^2.
\end{align*}
Using this we apply a Markov bound:
\begin{align*}
  \mu(\{v \in B: Y(v) \ge \alpha k/4\}) \le \frac{ \int_B Y}{\alpha k/4} \le \frac{O(1) k^2\mu(B)^2}{\alpha k/4} = \mu(B) O(k/g),
\end{align*}
since $\mu(B) = O(1/g)$. 
Combining with \eqref{eq:collision} gives the desired result.  

\end{proof}

\subsubsection{$Q$: Vectors that hit many flow boxes $\{B_\nu\}$}
\label{sec:Q}

We now consider those vectors that intersect many flow boxes, and then consider decreasing subsets that avoid progressively more types of self-intersection (arising from hitting both a flow box and its transverse box).  We show that the measure of these subsets decreases in a definite way.  This is where the reasoning from the birthday paradox is applied.  

We fix a positive integer $k$.  Then let 
    \begin{align*}
      Q^= := & B-R \\
          = &\Big\{v\in B: \ \exists  B_{\nu_1(v)}, \ldots, B_{\nu_{\lfloor (\alpha/4) k\rfloor}(v)} \in \{B_\nu\} \text{ distinct s.t. } \forall i \le \frac \alpha 4 k,  \\
      & \text{ \ \ \ } \exists  j\le (1-\alpha)k \text{ s.t. } g_{t_j}v \in B_{\nu_i(v)} \Big\},
    \end{align*}
    i.e. $Q^=$ consists of vectors that hit at least $\frac \alpha 4 k$ distinct elements of $\{B_\nu\}$ among times $t_1,\ldots, t_{\lfloor(1-\alpha)k\rfloor}$, which we label $B_{\nu_1(v)}, \ldots, B_{\nu_{\lfloor (\alpha/4) k\rfloor}(v)}$.

    To deal with ``edge effect components'', we will also consider similar sets defined with respect to flow boxes of slightly different size (as in proofs of \Cref{thm:effect-pgt} and \Cref{thm:effect-mult-mix}).  Hence, on first reading, the reader is advised to ignore the differences between $Q^+,Q^=,$ and $Q$.  Let $Q^+\subset B^+$ be the corresponding set for the $\{B_\nu^+\}$ (recall the enlarged flow boxes $B^+$ as defined in \Cref{sec:notation-setup}), i.e.
        \begin{align*}
          Q^+ := & \Big\{v\in B^+: \ \exists  B^+_{\nu^+_1(v)}, \ldots, B^+_{\nu^+_{\lfloor (\alpha/4) k\rfloor}(v)} \in \{B^+_\nu\}\text{ distinct s.t. } \forall i \le \frac \alpha 4 k,  \\
      & \text{ \ \ \ } \exists  j\le (1-\alpha)k  \text{ s.t. } g_{t_j}v \in B^+_{\nu^+_i(v)} \Big\}.
        \end{align*}

        Note that $Q^=\subset Q^+$.  We then define $Q$ to be the union of components of $Q^+$ that intersect $Q^=$ i.e.
        \begin{align*}
          Q:=\bigcup \{\mathcal{C} \text{ component of } Q^+ : \mathcal{C} \cap Q^=\ne \emptyset\}.
        \end{align*}

        Note that $Q^= \subset Q\subset Q^+$.

In the below lemma, starting with $Q$, we will construct progressively smaller sets $Q_j$ by imposing further conditions.  Part (\ref{item:Qshape})  corresponds to the fact that $Q_j$ is defined in terms of conditions on the behavior of vectors under geodesic flow only up to time $t_j$.  The tangent vectors removed at each stage will correspond to geodesics with ``robust self-intersection", and in particular vectors in $S$, i.e. those corresponding to simple geodesics, will not be removed (though some of these may be in $R$, and thus may not even be in the first set $Q$).  

  \begin{sublemma}
    \label{lem:Qjstruc}
    For each $k\ge 0$, there exist sets 
      \begin{align*}
    Q_{\lceil (1-\alpha)k \rceil} \supset Q_{\lceil (1-\alpha)k\rceil + 1} \supset \cdots \supset Q_k
      \end{align*}
      where $Q_{\lceil (1-\alpha)k \rceil}:=Q$, and such that for each $j=\lceil (1-\alpha)k\rceil, \ldots, k$:
    \begin{enumerate}[(i)]
    \item  $S\cap Q = S\cap Q_{j}$.  Furthermore, for any $u\in Q_{j-1}-Q_{j}$, there exists some $B_\nu$ and $j'\le k$ such that $g_{t_{j'}}u \in B_\nu^{++ }$ and $g_{t_{j}}u \in \hat B^-_\nu$ (which implies the geodesic segment through $u$ has a ``robust self-intersection''), \label{item:Qsimp} %
  \item  $\mu(Q_j) \le \mu(B) \cdot \left\{1-\Omega(1) k\mu(B)\right\}^{j-\lceil (1-\alpha)k \rceil}$, \label{item:Qmeas}
  \item $Q_j$ is a union of subboxes that are full width in the contracting direction, and have width $\ge e^{-t_j}\eta$ in the expanding direction. 
  \label{item:Qshape}
  \end{enumerate}
  \end{sublemma}

  \begin{proof}

    We will inductively construct the sets, verifying the listed properties along the way.

    For the base case, set $Q_{\lceil (1-\alpha)k \rceil}:=Q$.  Then properties (\ref{item:Qsimp}) and (\ref{item:Qmeas}) are immediate.
    For (\ref{item:Qshape}), the idea is that the conditions depend on behavior with regard to lying in certain flow boxes at times only up to $t_{(1-\alpha)k}$, and thus the components should be full width in the contracting direction, and expanding width $\ge e^{t_{(1-\alpha)k}}\eta$.  However, because of ``edge effect components", this is not literally true of components of $Q^=$.  On the other hand, since we have defined $B_\nu$ and $B_\nu^+$ to have sufficient space between them (in particular, any vector in the complement of $B_\nu$ within $B_\nu^+$ is contained in an $\eta$ flow box fully contained in that complement), the desired property will be true of components of $Q$.  

    Suppose we have constructed $Q_j$ with the desired properties; we will now construct $Q_{j+1}$.

      Note that any $v\in Q_j$ is also in $Q$, hence also in $Q^+$, which means there exist $B^+_{\nu^+_1(v)}, \ldots, B^+_{\nu^+_{\lfloor (\alpha/4) k\rfloor}(v)} \in \{B^+_\nu\}$ distinct such that for each  $i \le \frac \alpha 4 k$ there exists a $j\le (1-\alpha)k \text{ such that } g_{t_j}v \in B^+_{\nu_i(v)}$.  Let $A(v)$ be the flow box centered at $v$ that is full width in contracting and flow directions (i.e. width $\eta$), and width $e^{-t_j}\eta$ in the expanding direction.  Now any $v'\in A(v)$ will satisfy the same property as $v$ but with the boxes enlarged, i.e.  $g_{t_j}v' \in B^{++}_{\nu_i(v)}$ (for the same $i$'s and $j$'s as for $v$).   We now apply the Vitali Covering Lemma, %
      taking the ``balls" to be the $A(v)$, which yields a \emph{disjoint} sub-collection $\{A(v)\}$, such that the enlarged sets $\{A(v)^+\}$  cover all of $Q_j$.  
    Hence we can cover at least $1/1000$ of the measure of $Q_j$ %
    by a union of a set $\mathcal P$ of \emph{disjoint} subboxes that have full width in the contracting direction and width $\ge e^{-t_j}\eta$ in the expanding direction, and such that for each $P\in \mathcal P$ we can take a \textit{common value} of $\nu_1(P), \ldots, \nu_{\lfloor (\alpha/)4 k \rfloor}(P)$ that works for all $v'\in P$. That is, for each $i$ there exists a $j\le (1-\alpha)k $ such that $g_{t_j}v' \in B^{++ }_{\nu_i(P)}$ for all $v'\in P$. 
  
  Using this definition of $\mathcal P$, we now define three closely related sets $Q^=_{j+1}$, $Q^-_{j+1}$, $Q_{j+1}$ (the reader is advised to ignore the differences between them on first reading; the somewhat complicated definitions are needed to take care of ``edge effect components").  The idea is to remove vectors $v$ for which $g_{t_{j+1}}v$ hits transverse boxes to certain flow boxes already visited.  We set: 
  \begin{align*}
    Q^=_{j+1} := Q_j - \bigcup_{P \in \mathcal P}  \bigcup_{\ell=1}^{\lceil\frac \alpha 4 k\rceil} P \cap g_{-t_{j+1}}\hat B_{\nu_\ell(P)}, \\
    Q^-_{j+1} := Q_j - \bigcup_{P \in\mathcal P} \bigcup_{\ell=1}^{\lceil \frac \alpha 4 k \rceil} P \cap g_{-t_{j+1}}\hat B_{\nu_\ell(P)}^- .
  \end{align*}
  Note that $Q^=_{j+1} \subset Q^-_{j+1}$.  
  Finally, define $Q_{j+1}$ to be the union of components of $Q^-_{j+1}$ that intersect $Q^=_{j+1}$.  Note that $Q^=_{j+1} \subset Q_{j+1}\subset Q^-_{j+1}$.
  
  We now verify the desired properties of $Q_{j+1}$ (assuming the properties for $Q_j$).  
  
  \begin{enumerate}
    \item    %

    For Property (\ref{item:Qsimp}), we first prove the second more specific statement.  We will prove the condition holds for any tangent vector $u$ in $Q_j-Q_{j+1}^=$, from which the desired result follows since $Q_{j+1}^=\subset Q_{j+1}$.
    Note that for any such $u$, there is some $P\in \mathcal{P}$ and $\ell$ such that $u\in P$, and $g_{-t_{j+1}}\hat B_{\nu_\ell(P)}$, i.e. $g_{t_{j+1}}u\in \hat B_{\nu_\ell(P)}$.  On the other hand, by definition of $\nu_\ell(P)$, there exists $j'$ such that $g_{t_{j'}}u\in B^{++ }_{\nu_\ell(P)}$.  This is the desired statement.

    Next we show that
    $$S\cap Q_j= S\cap Q_{j+1}.$$
    In fact, by the above, for any $u\in Q_j-Q_{j+1}$, its geodesic segment hits both $B^{++}_{\nu_\ell(P)}, \hat B_{\nu_\ell(P)}$, which forces it to have a self-intersection (by \Cref{prop:flow-boxes}, \eqref{item:self-inter}), and hence cannot lie in $S$.  
    Iterating the equality over $j$ gives $S\cap Q= S\cap Q_{j+1}.$

    \item   Property (\ref{item:Qmeas}) we will prove by bounding $\mu(Q^-_{j+1})$ using disjointness of  $\hat B_{\nu_\ell(P)}^- , \hat B_{\nu_{\ell'}(P)}^-$.  

    First note that for any $P\in \mathcal P$, by \Cref{lem:band-future} (for which the width hypothesis holds by property \eqref{item:Qshape} for $Q_j$, which we know by induction; the expanding width might be larger than $e^{-{t_j}}\eta$, in which case we find disjoint subboxes of this width that cover a definite fraction of the band, and apply the lemma to each of these to each of these.)   
    \begin{align}
        \label{eq:band-lower}
        \mu\left(P \cap g_{-t_{j+1}}\hat B_{\nu_\ell(P)}^-\right) \ge \Omega(1) \mu(P)\mu(B_{\nu_\ell(P)}^-) \ge \Omega(1) \mu(P)\mu(B).
    \end{align}

    Then 
    \begin{align*}
        \mu(Q_{j+1}) &\le \mu(Q^-_{j+1}) = \mu \left( Q_j - \bigcup_{P \in\mathcal P} \bigcup_{\ell\le \frac \alpha 4 k} P \cap g_{-t_{j+1}}\hat B_{\nu_\ell(P)}^- \right)  \\
                     & \le \mu(Q_j) - \sum_{P\in \mathcal P} \sum_{\ell \le \frac \alpha 4 k}\mu\left( P \cap g_{-t_{j+1}}\hat B_{\nu_\ell(P)}^- \right)  && \text{ (by disjointness)} \\
                     & \le \mu(Q_j) - \sum_{P\in \mathcal P} \sum_{\ell \le \frac \alpha 4 k}\Omega(1) \mu(P)\mu(B) && \text{ (by \eqref{eq:band-lower})}\\
                     &      \le \mu(Q_j) - \mu(B) \cdot \frac \alpha 4 \cdot k \cdot \Omega(1) \sum_{P\in \mathcal P} \mu(P) \\
                     &      \le \mu(Q_j) - \mu(B) \cdot \frac \alpha 4 \cdot k \cdot \Omega(1)\cdot \frac{\mu(Q_j)}{1000} && \text{ (using property of } \mathcal P)\\
                     &      \le \mu(Q_j)\left\{1- \Omega(1)\mu(B) k\right\}\\
                     & \le \mu(B) \left\{1-\Omega(1)\mu(B) k \right\}^{j-\lceil (1-\alpha)k \rceil} \left\{1- \Omega(1)\mu(B) k\right\} && \text{ (by inductive hyp. for } Q_j)\\
                     & \le \mu(B) \left\{1-\Omega(1)\mu(B) k \right\}^{j+1-\lceil (1-\alpha)k \rceil}. 
    \end{align*}

    \item     For Property  (\ref{item:Qshape}), we use the fact that the definition of $Q_{j+1}$ is in terms of behavior of vectors under geodesic flow only up to time $t_{j+1}$. This itself does not immediately imply the property, due to the possibility of edge effect components.  However, we avoid these by, in the definition of $Q_{j+1}$, only taking those components of $Q_{j+1}^-$ that intersect $Q^=_{j+1}$.  Since everything is happening in $Q$, which is constructed by \emph{removing} vectors that hit certain combinations of flow boxes in the future, we also use the “full box separated” property.

    \end{enumerate}
    
  \end{proof}

\renewcommand\qedsymbol{$\blacksquare$} 

  \subsubsection{Proof of \Cref{lem:no-inter}}
\label{sec:proof-no-inter}

We now combine the ingredients we've just developed.  
\begin{proof}[Proof of \Cref{lem:no-inter}]
  We define
  \begin{align*}
    \bar S_k := R \cup Q_{k},
  \end{align*}
  where $R=R(k)$ was defined in \Cref{sec:R}, and $Q_{k}$ in \Cref{lem:Qjstruc}.  
  \begin{enumerate}
  \item   For (\ref{item:barSsimp}), note that by definition of $Q^=$, we have $B=R\cup Q^=$, and since $Q\supset Q^=$, we also have $B=R\cup Q$.  By applying \Cref{lem:Qjstruc} (\ref{item:Qsimp}) we then see that $S\subset R\cup Q_k= \bar S$.
    
  \item For (\ref{item:barSrobust}), note that any $u\in B-\bar S_k$ is in some $Q_j-Q_{j+1}$ (it is ``removed'' at some stage in the process of paring down the $Q_j$).  By the second part of \Cref{lem:Qjstruc} (\ref{item:Qsimp}),  this means that $u$ satisfies the desired property.  
    
  \item   For the measure bound (\ref{item:barSmeas}), we have
  \begin{align*}
    \mu(\bar S_k) &\le \mu(R) + \mu(Q_k) \le \mu(R^f) + \mu(R^m) + \mu(Q_k) 
  \end{align*}
  and then we bound the terms on the right using the measure estimates of \Cref{lem:Rf}, \Cref{lem:Rm}, and \Cref{lem:Qjstruc} (\ref{item:Qmeas}).

\end{enumerate}

\end{proof}

\subsection{Simple closed geodesics hitting $B$}
\label{sec:simple-geod-B}

In this subsection, we will use the previous results to bound the number of times simple closed geodesics of a given length pass through a given flow box.    

In \Cref{lem:no-inter}, for any flow box $B$, we defined a set $\bar S_k \subset B$ of directions in $B$ that do not have specific types of ``robust" self-intersection after $kc \log g$ time. Recall that $S$ is the set of vectors in $B$ tangent to simple (not necessarily closed) geodesics. So $S \subset \bar S_k$.  We will show that $S$ is actually ``buffered" inside $\bar S_k$ in the following sense:
\begin{lemma}
\label{lem:buffer}
Fix $\delta, s_0, \epsilon > 0$. Then there exist $c_0, \eta_0 > 0$ so that for all $c > c_0$, $\eta$ with $\eta_0/100 < \eta < \eta_0$, and  $k \in \mathbb N$, the following holds. %

Let $X$ be a $\delta$-expander surface of genus $g$ with systole at least $s_0$. Let $B$ be an $\eta$-flow box. Suppose $\bar S_k \subset B$ is the set given by \Cref{lem:no-inter} for the given $\delta, s_0, \epsilon, c$ and $k$. 
Let $v \in B$ lie on a simple geodesic, and let $P \subset B$ be any flow box containing $v$, of width $e^{-ck\log g} \eta$ in the expanding direction, and $\eta$ in both the contracting and geodesic flow directions. Then \[P \subset \bar S_k.\]
\end{lemma}
\begin{proof}
Given $\delta, s_0$ and $ \epsilon$, choose $\eta_0$ so that \Cref{lem:no-inter} holds for $27 \eta_0$, and \Cref{prop:flow-boxes} holds for $27\eta_0$. Given this choice of $\eta_0$, we choose $c_0$ so that \Cref{lem:no-inter} holds for $c_0/2$. Take any $c > c_0$. Let $\eta>0$ with $\eta_0/100 < \eta < \eta_0$. Note that the conclusion of \Cref{lem:no-inter} will hold for this $\eta$ as well. Let $B$ be any $\eta$-flow box. Fix any $k \in \mathbb N$.

Take the set $\bar S_k \subset B$ guaranteed by \Cref{lem:no-inter} for the above choices of $\delta, s_0, \epsilon, c, \eta$ and $k$.

  Let $v \in B$ lie on a simple geodesic. Instead of working with an arbitrary flow box containing $v$, we let $P$ be the $(2 e^{-ck \log g} \eta, 2 \eta, 2\eta)$ flow box centered at $v$.
  Note that $P$ will contain any $(e^{-ck \log g} \eta, \eta, \eta)$ flow box containing $v$, but $P$ need not lie entirely inside $B$. We will show that $P \cap B \subset \bar S_k$.

Suppose for contradiction that there is some $w \in B - \bar S_k$ for which $w \in P$. In that case, by Lemma \ref{lem:no-inter} part (\ref{item:barSrobust}) there is some $\eta$ flow box $B_0$,
and some $t_1, t_2$, with $0 \leq t_1, t_2 \leq k c \log g$, so that 
\[
 g_{t_i} w \in B_0^{++ }, \text{ and } g_{t_j} w \in \hat B^-_0
\]
(recall that each additional $+$ superscript multiplies the dimensions of the respective boxes by 3, while $-$ divides by 3). 

Since $P$ is a $(2 e^{-ck \log g} \eta, 2 \eta, 2\eta)$ flow box, and $0 \leq t_1, t_2 \leq ck\log g$, it follows that $g_{t_1} P$ and $g_{t_2} P$ are flow boxes of dimension at most $2 \eta$ centered at $g_{t_1}v$ and $g_{t_2} v$, respectively. Since 
\[
 g_{t_1} w \in g_{t_1} P \cap B_0^{++}, \text{ and } g_{t_2} w \in g_{t_2} P \cap \hat B_0^-,
\]
we have that
\[
 g_{t_1} v \in B_0^{+++ }, \text{ and } g_{t_2} v \in \hat B_0^+,
\]
where the flow boxes $B_0^{+++ }$ and $B_0^+$ have dimension at most $27 \eta$.

We chose our $\eta_0$ so that $27\eta$ satisfies  \Cref{prop:flow-boxes}. So by part (\ref{item:self-inter}) of that proposition, the geodesic tangent to $v$ has a self-intersection. But this contradicts the assumption that $v$ lies on a simple geodesic. In particular, $P \cap B$ must have been contained inside $\bar S_k$. 
\end{proof}

We now count the number of times that simple closed geodesics of a given length pass through a fixed flow box.  

By \Cref{lem:buffer}, each $v \in S $ lies inside a $(e^{-kc\log g} \eta, \eta, \eta)$ flow box $P_v \subset \bar S_k$. In fact, if $v, v'$ both lie on the same geodesic arc $\sigma \subset B$, then we can choose $P_v = P_{v'}$. So for  each length $\eta$ geodesic arc $\sigma \subset S$, we let $P_\sigma$ denote any $(e^{-kc\log g} \eta, \eta, \eta)$ flow box so that $\sigma \subset P_\sigma \subset \bar S_k$, and set
\[
\core(\bar S_k) := \bigcup_{\sigma} P_\sigma \subseteq \bar S_k
\]
(see Figure \ref{fig:Simple_set_buffers}). We have $S \subset \core(\bar S_k)$.

\begin{figure}[ht]
 \centering 
 \includegraphics{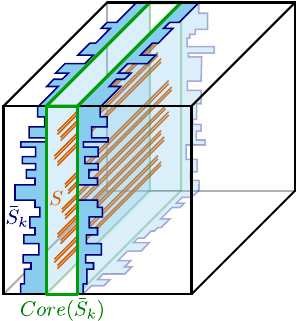}
 \caption{A component of $\core(\bar S_k)$ contained in a component of $\bar S_k$. Both contain the set $S$, that is, those vectors in $B$ that lie on simple geodesics.}
 \label{fig:Simple_set_buffers}
\end{figure}

Next, choose any $t > \eta$, and define
\[
G_{simp}(B,t,\eta)
\]
to be the set of vectors in $B$ that lie on some simple closed geodesic of length between $t-\eta$ and $t+\eta$, that passes through $B$ in a length $\eta$ arc. Let $N_{simp}(B,t,\eta)$ be the number of length $\eta$ arcs in this set. 

The strategy is now as follows. Note that
\begin{align*}
    G_{simp}(B,t,\eta) &\subset S \cap g_{-t}B\\
     & \subset \core(\bar S_k) \cap g_{-t}B.
\end{align*}
First, we will show that each connected component of $\core(\bar S_k) \cap g_{-t}B$ contains at most one length $\eta$ arc $\sigma \subset G_{simp}(B,t,\eta)$ (\Cref{cla:simp-comp-bij}). This allows us to use the measure bound on $\bar S_k$ from \Cref{lem:no-inter} to get an estimate on $N_{simp}(X,t,\eta)$ in \Cref{lem:no-inter-return}. Lastly, we get a cleaner bound on $N_{simp}(B,t,\eta)$ for large $t$ in \Cref{prop:eps-error}.

\begin{lemma}
\label{cla:simp-comp-bij}

Fix $\delta, s_0, \epsilon > 0$. Then there exist $c_0, \eta_0 > 0$ so that for all $c > c_0$, $\eta>0$ with $\eta_0/100< \eta < \eta_0$, and  $t > 2 s_0$, the following holds. Let $X$ be a $\delta$-expander surface with systole at least $s_0$. Let $B$ be an $\eta$-flow box on $X$. Let $k \in \mathbb N$, %
and $\bar S_k$ be the set from \Cref{lem:no-inter} for this $\delta, s_0, \epsilon, c$ and $k$. Let $\core(\bar S_k)$  be the subset of $\bar S_k$ defined above. 
We have
\[
N_{simp}(B,t,\eta) \leq \comp(\core(\bar S_k) \cap g_{-t}B).
\]
\end{lemma}
\begin{proof}
If $v \in G_{simp}(B,t,\eta)$, then $g_\ell v\in B$, for some $\ell$ with $t-\eta \leq \ell \leq t+\eta$.  Let $\sigma$ be the geodesic segment in $B$ containing $v$; there must be some $v' \in \sigma$ so that $g_t v' \in B$. Thus, every such segment $\sigma$ passes through $\core(\bar S_k) \cap g_{-t}B$.

As $\core(\bar S_k) \subset B$, each component of $\core(\bar S_k) \cap g_{-t}B$ lies in some connected component of $B \cap g_{-t}B$. By Lemma \ref{lem:closing-unique}, each component of $B \cap g_{-t}B$ intersects at most one segment $\sigma$ of length $\eta$ of a  (not necessarily simple) closed geodesic with length in $[t-\eta, t+\eta]$.  Thus, at most one such segment passes through each connected component of $\core(\bar S_k) \cap g_{-t}B$.
\end{proof}

To bound $N_{simp}(B,t,\eta)$, it would be natural to try to apply the measure estimate \Cref{lem:no-inter} \eqref{item:barSmeas} with $k$ chosen so that $t=ck\log g$. Most of the terms in  that estimate decay as $k$ increases, but the $O(k/g)$ term does not. Thus, for large $t$, we will in fact use a smaller $k$.  We first prove a flexible estimate that allows for a range of choices of $k$ depending on $t$.  

\begin{lemma}
  \label{lem:no-inter-return}
Fix $\delta, s_0, \epsilon > 0$. Then there exist $c_0, \eta_0, \alpha> 0$ so that for all $c > c_0$ and $\eta$ with $\eta_0/100 < \eta < \eta_0$,  
the following holds. 
Let $X$ be a $\delta$-expander surface of genus $g$ with systole at least $s_0$. Let $t>0$, and suppose $k \in \mathbb N$ so that $ck \log g < t$. 
Let $B\subset T^1X$ be an $\eta$ flow box. 
  Then 
 \[
 N_{simp}(B,t,\eta) \leq O(1) e^{t} \mu(B) \left(\epsilon + O(1/k) + O(k/g) + \left\{1-\Omega(1)k\mu(B)\right\}^{\alpha k}\right).
\] 
\end{lemma}
\begin{proof}
Fix $t > 0$. Choose $\eta_0, c_0$ and $\alpha$ so that Lemmas \ref{lem:comp-P-cap-B}, \ref{lem:no-inter} and \ref{lem:buffer} hold for $c_0/2$ (and the given $\eta_0$ and $\alpha$). Choose any $\eta$ with $\eta_0/100 < \eta < \eta_0$.

Let $c > c_0$. %
Now we let $\bar S_k$ be the set from \Cref{lem:no-inter} for the above $\delta,s_0, \epsilon, \frac c2, \eta$ and $k$. Take its core set $\core(\bar S_k)$. 

Suppose $\core(\bar S_k)$ has $N$ connected components, $C_1, \dots, C_N$. From the way $\core(\bar S_k)$ was defined, it is a union of subboxes that have width $e^{-\frac c2k \log g} \eta$ in the expanding direction (and full width in the contracting and geodesic flow directions). Thus, each $C_i$ is a subbox of width $e^{-w_i}\eta$ in the expanding direction with 
\[
e^{-w_i} \eta > e^{-\frac c2 k \log g} \eta.
\]
Thus, $w_i < \frac c2 k \log g$. As $ck\log g < t$ and $k \geq 1$, we have $t> w_i + \frac c2 k \log g$. So, we can apply \Cref{lem:comp-P-cap-B}, and get 
\[
\comp (C_i \cap g_{-t}B) \leq O(1)e^{t-w_i}\mu(B).
\]
Summing over all components $C_i$, we get 
\begin{align}
\label{eq:comp_core_first}
    \comp(\core(\bar S_k) \cap g_{-t}B) & \leq O(1) \mu(B) e^{t}\sum_{i=1}^N e^{-w_i}.
\end{align}

We must now bound the term $\sum_{i=1}^N e^{-w_i}$, which is the total width of $\core(\bar S_k)$. We do this using the bound on the measure of $\bar S_k$. Using the flow box structure of each component $C_i$, we get
\begin{align*}
    \mu(\core(\bar S_k)) & = \sum_{i=1}^N \mu(C_i) \\
     & = \Omega(1) \sum_{i=1}^N e^{-w_i}\mu(B),
\end{align*}
where the implicit constant in the $\Omega(1)$ depends only on $\eta_0$. On the other hand, $\mu(\core(\bar S_k))$ is bounded above by $\mu(\bar S_k)$, which we have bounded in \Cref{lem:no-inter}:
\begin{align*}
    \mu(\core(\bar S_k)) & \leq \mu(\bar S_k) \\
     & \leq \mu(B) \left(\epsilon + O(1/k) + O(k/g) + \left\{1-\Omega(1)k\mu(B)\right\}^{\alpha k}\right). 
\end{align*}
Combining this with the previous estimate, we see that 
\[
\sum_{i=1}^N e^{-w_i} \leq O(1)\left(\epsilon + O(1/k) + O(k/g) + \left\{1-\Omega(1)k\mu(B)\right\}^{\alpha k}\right).
\]
Finally, applying this estimate to \eqref{eq:comp_core_first} gives
\[
\comp(\core(\bar S_k) \cap g_{-t}B) \leq O(1)e^t\left(\epsilon + O(1/k) + O(k/g) + \left\{1-\Omega(1)k\mu(B)\right\}^{\alpha k}\right).
\] 
By \Cref{cla:simp-comp-bij}, this number of components bounds $N_{simp}(B,t,\eta)$, and so our lemma follows.

\end{proof}

The previous proposition gave a general bound that holds for all lengths $t$ large enough.  We will now specify to the relevant length regime for our purposes.   This involves estimates for several terms that are analogs of the estimates in the various cases in proof of \Cref{prop:prob-problem}.  

\begin{prop}
  \label{prop:eps-error}
  Fix $\delta, s_0, \epsilon > 0$. There exist $d,\eta_0,g_0>0$ such that for any $\eta$ with $\eta_0/100 < \eta < \eta_0$, and any $\delta$-expander surface $X$ of genus $g\ge g_0$ and systole at least $s_0$, the following holds. Let $B\subset T^1X$ be an $\eta$ flow box. Let $t > d\sqrt{g} \log g$. Then 
  \begin{align*}
  N_{simp}(B,t,\eta) \leq \epsilon \cdot e^{t} \mu(B).
  \end{align*}
\end{prop}
\begin{proof}
By \Cref{lem:no-inter-return}, for any fixed $\epsilon' > 0$, there is a $c>0$ so that 
\begin{align}
N_{simp}&(B,t,\eta) \\
& \leq e^{t} \mu(B) \left(O(\epsilon') + O(1/k) + O(k/g) + O\left(\left\{1-\Omega(1)k\mu(B)\right\}^{\alpha k} \right) \right) \label{eq:N-simp-bound}
\end{align}
for any $k<\frac{t}{ c\log g}$.
First, we can choose $\epsilon'$ so that the $O(\epsilon')$ term is bounded above by $\epsilon/4$. Note that this gives us a fixed choice of $c$ that we will use for the remainder of the proof.

We now choose some $d>c$.  As we proceed through the proof, we will need to make it progressively larger, but only depending on the parameters allowed in the statement.  Take 
\begin{align}
    k = \left \lfloor \min \left(\frac{t}{ c\log g}, \ \frac 1d g^{2/3}\right) \right \rfloor. \label{eq:kmin}
\end{align}
The reason for including $\frac{1}{d} g^{2/3}$ in the min is so that the $O(k/g)$ error term in \eqref{eq:N-simp-bound} can be made small, as we will soon see (a similar min appears in the analogous probability problem, \Cref{prop:prob-problem}; any function growing much faster than $\sqrt{g}$ and much slower than $g$ would work).  

We now estimate the remaining error terms in (\ref{eq:N-simp-bound}) one by one.

We have that 
\begin{align*}
    O(1/k) = O\left( \max\left( \frac{c\log g}{t}, \frac{d}{g^{2/3}} \right) \right) \le  O\left(\max\left( \frac{c}{d\sqrt g}, \frac{d}{g^{2/3}} \right) \right),
\end{align*}
which can be made less than $\epsilon/4$, for all $g\ge g_0$, by taking $g_0$ sufficiently large. 

Next 
\begin{align*}
    O(k/g) = O\left( \min\left( \frac{t}{cg\log g} , \frac{1}{dg^{1/3}} \right) \right) \le O(1/d),
\end{align*}
which can be made less than $\epsilon/4$ by taking $d$ large.  

To estimate the last error term, we will use the bound 
\begin{align*}
 \left\{1-\Omega(1)k\mu(B)\right\}^{\alpha k} & \leq \exp\left(-\alpha\Omega(1)\mu(B)k^2\right),
\end{align*}
which follows from the fact that $1-x\le e^{-x}$ for $x\ge 0$. (Recall that in our definition of $\Omega$, the function must be non-negative.  Also note that since $\{x\}:=\max(x,0)$, the bound holds trivially if $1-\Omega(1)k\mu(B)<0$.) 

So we get, using this, the fact that $\mu(B) =\Omega(1/g)$, \eqref{eq:kmin}, and $t> d\sqrt{g} \log g$, that
\begin{align*}
 O\left(\left\{1-\Omega(1)k\mu(B)\right\}^{\alpha k}\right) & \leq \exp\left( - \Omega(1)\mu(B)k^2\right) \\
 & \le \exp\left( - \Omega(1)k^2/g\right)  \\
 & \le \max\left( \exp\left(-\Omega(1)\frac{t^2}{(c\log g)^2}\frac{1}{g}\right), \exp\left(-\Omega(1) \frac{g^{4/3}}{d^2g}\right) \right) \\
 & \le \max\left( \exp\left(-\Omega(1)d^2/c^2\right), \exp\left(-\Omega(1) g^{1/3}/d^2\right) \right),
\end{align*}
which can be made less than $\epsilon/4$ for all $g\ge g_0$ by first taking $d$ large, and then $g_0$ large.

Combining all the above, we see that we have found a $d$ and $g_0$ such that for all $g\ge g_0$,
\begin{align*}
    N_{simp}(B,t,\eta) \le e^t\mu(B) (\epsilon/4 + \epsilon/4 + \epsilon/4 + \epsilon/4) =\epsilon \cdot e^t\mu(B). 
\end{align*}
\end{proof}

 \subsection{Completing the proof of \Cref{thm:simplicity}}
 \label{sec:proof}

We now complete our estimates of the number of simple closed geodesics of length at most $L$, denoted $N_{simp}(X,L)$. In \Cref{prop:eps-error}, we bounded $N_{simp}(B,t,\eta)$, the number of times that simple closed geodesics of length roughly $t$ can pass through any fixed $\eta$ flow box, when $\eta$ is small enough, and $t$ is at least on the order of $\sqrt g \log g$. Applying \Cref{prop:fb-to-total} to the set of $t \in [L/2,L]$, we will now get an upper bound on the total number of closed geodesics of length at most $L$.%

\begin{prop}
\label{prop:num-simple}
     Fix $\delta, s_0,\epsilon>0$.  There exists a constant $d$ such that for any $\delta$-expander surface $X$ of genus $g$, with systole at least $s_0$, and all $L> d\sqrt{g} \log g$,
  \begin{align*}
    N_{simp}(X,L) \le  \epsilon \cdot e^L/L.
  \end{align*}
\end{prop}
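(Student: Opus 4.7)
The plan is to average the pointwise bound from \Cref{prop:no-inter-return} over all flow boxes $B(v)$ as $v$ ranges over $T^1X$, following the same Fubini-style argument used in the proof of \Cref{thm:effect-pgt}. Fix $\eta>0$ small enough (depending on $s_0$) that every $\eta$ flow box on $X$ is embedded. Choose the integer $k$ so that each of the four terms
\begin{align*}
\epsilon + O(1/k) + O(k/g) + \bigl\{1-\tfrac{1}{O(1)}k\mu(B)\bigr\}^{\alpha k}
\end{align*}
from \Cref{prop:no-inter-return} is $O(\epsilon)$. Since $\mu(B)=\Theta(1/g)$ (a fixed-size box in a normalized probability measure whose underlying Haar mass is $\Theta(g)$), the last term is $\exp(-\Omega(k^2/g))$, which is $\le \epsilon$ precisely when $k \ge C(\epsilon)\sqrt{g}$. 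Taking $k = C(\epsilon)\sqrt{g}$ satisfies the other constraints $1/k \ll \epsilon$ and $k/g \ll \epsilon$ automatically, and the hypothesis $k < L/(c\log g)-1$ of \Cref{prop:no-inter-return} becomes $L > cC(\epsilon)\sqrt{g}\log g$, defining the constant $d$ in the statement.

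With $k$ chosen this way, \Cref{prop:no-inter-return} gives a bound $N_{simp}(B(v),L,\eta) \le \epsilon' e^{L}\mu(B)$ that is uniform in $v\in T^1X$, where $\epsilon'$ can be made as small as we like. Now I would run the Fubini computation exactly as in the proof of \Cref{thm:effect-pgt}: each simple closed geodesic of length $\ell \in [L-\eta,L+\eta]$ contributes one length-$\eta$ segment in $B(v)$ for each crossing through $B(v)$, so
\begin{align*}
N_{simp}(X,[L-\eta,L+\eta]) \cdot L \cdot \mu(B)
&= (1+o(1)) \int_{T^1X}\eta\cdot N_{simp}(B(v),L,\eta)\,d\mu(v) \\
&\le (1+o(1))\,\eta\cdot \epsilon' e^{L}\mu(B),
\end{align*}
so $N_{simp}(X,[L-\eta,L+\eta]) \lesssim \eta\epsilon' e^{L}/L$. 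Summing this estimate over a grid of length windows $[L'-\eta,L'+\eta]$ for $L'\in[s_0,L]$ in steps of $2\eta$ and approximating the sum by $\int_{s_0}^L e^{L'}/L'\,dL' \sim e^L/L$ yields $N_{simp}(X,L) \le C''\epsilon' e^L/L$, and taking $\epsilon'=\epsilon/C''$ gives the result.

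The main obstacle is calibrating the choice of $k$ so that all four error terms are small simultaneously while respecting $k < L/(c\log g)-1$; the $\sqrt{g}\log g$ scale in the hypothesis is forced precisely by balancing the exponential ``birthday'' term $\exp(-\Omega(k^2/g))$ (requiring $k\succ\sqrt{g}$) against the gap between consecutive sampling times $t_{i+1}-t_i = c\log g$ (requiring $L \succ k\log g$). A secondary technical point is that the Fubini identification of $\int N_{simp}(B(v),L,\eta)d\mu(v)$ with a sum over geodesics requires that no flow box contains more than one length-$\eta$ subsegment of the same closed geodesic under consideration, which is handled by the Anosov closing uniqueness already invoked in the proof of the effective prime geodesic theorem and in \Cref{cla:simp-comp-bij}.
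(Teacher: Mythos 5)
Your overall strategy matches the paper's: fix a flow box size $\eta$, apply \Cref{prop:no-inter-return} with a choice of $k$ calibrated so all error terms are small, then average over boxes and sum over length windows. Your choice $k = C(\epsilon)\sqrt{g}$ is cleaner than the paper's $k = \min(L/(c\log g)-1, g^{2/3}/d)$ but works equally well — both respect the constraint $k < L/(c\log g)-1$ once $L > d\sqrt{g}\log g$, and both make the birthday term $\exp(-\Omega(k^2/g))$ small.

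However, there is a genuine gap in the window-summation step. You write that you sum the estimate $N_{simp}(X,[L'-\eta,L'+\eta]) \lesssim \eta\epsilon' e^{L'}/L'$ over a grid of $L'\in[s_0,L]$. But this per-window estimate comes from applying \Cref{prop:no-inter-return} with $k \approx C(\epsilon)\sqrt{g}$, which requires $k < L'/(c\log g) - 1$, i.e.\ $L' > d\sqrt{g}\log g$. For $L'$ in the lower part of the range $[s_0, L]$, that hypothesis fails and you have no per-window bound at all. The low windows do contribute negligibly ($\sum_{L' < L/2} e^{L'} \lesssim e^{L/2} \ll \epsilon\, e^L/L$), but you still need \emph{some} counting input to bound them, which your argument never supplies. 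The paper handles this by splitting
\begin{align*}
N_{simp}(X,L) \le N(X,L/2) + N_{simp}(X,[L/2,L]),
\end{align*}
bounding $N(X,L/2)$ via the effective prime geodesic theorem (\Cref{thm:effect-pgt}), and summing the per-window bound only over $L' \in [L/2, L]$, where $L' > (d/2)\sqrt{g}\log g$ guarantees the hypothesis of \Cref{prop:no-inter-return}. You should insert an analogous split, or explicitly invoke a crude upper bound like $N_{simp}(X,[L'-\eta,L'+\eta]) \le N(X,[L'-\eta,L'+\eta])$ plus \Cref{thm:effect-pgt} for the small-$L'$ windows, rather than tacitly extending the sharp per-window bound to the whole range.
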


\begin{proof}
By \Cref{thm:effect-pgt} (with $\epsilon=1$), there exists $c$ such that for all $L>c \log g$,
\begin{align*}
    N_{simp}(X,L/2) \le N(X,L/2) \le 2 \cdot \frac{e^{L/2}}{L/2} =\frac{4}{e^{L/2}}\cdot \frac{e^L}{L}. 
\end{align*}
By taking $c$ sufficiently large (depending only on $\delta,s_0,\epsilon$), we can ensure that the term $\frac{4}{e^{L/2}}$ in the above is less than $\epsilon,$ and so for all $L>c \log g$, 
\begin{align}
    N_{simp}(X,L/2) < \epsilon \cdot e^L/L. \label{eq:low-length}
\end{align}

Now choose $\eta_0$ small enough to satisfy the condition of \Cref{prop:fb-to-total} and \Cref{prop:eps-error}, and let $\eta$ be such that $\eta_0/100 \le \eta \le \eta_0$.  For each $v \in T^1X$, let $B = B(v)$ be an $\eta$ flow box centered at $v$, which we can assume is embedded in $T^1X$, since $\eta_0$ is small (and can depend on the systole lower bound $s_0$). 

Now take a $d > 0$ and $g_0 > 0$ so that \Cref{prop:eps-error} holds for $d/2$ (and $g_0$). That is, there exists $d$ such that if $X$ is a $\delta$-expander surface of genus $g\ge g_0$ and systole at least $s_0$, then for any $t> d/2  \sqrt{g}\log g$, we have 
\begin{align*}
  N_{simp}(B,t,\eta) \leq \epsilon \cdot e^{t} \mu(B).
\end{align*}
Note that we can increase $d$ as much as we like, and the equation will still hold. So we can assume $d \sqrt g_0 > c$ (and so, $d \sqrt g > c$ for all $g > g_0$.)

Choose $L > d \sqrt g \log g$. Then for all $t \in [L/2, L]$, we have $N_{simp}(B,t,\eta) \leq \epsilon \cdot e^{t} \mu(B)$.
So we can apply \Cref{prop:fb-to-total} with $G=G_{simp}$, the set of simple closed geodesics. This gives, for all $L> d \sqrt{g} \log g$ and $g\ge g_0$, 
\begin{align}
    N_{simp}(X,[L/2,L]) < 12 \epsilon \cdot  \frac{e^L}{L}. \label{eq:mid-to-high}
\end{align}

We ensured that $d \sqrt g > c$. So if $L > d \sqrt g \log g$, then we also have $L > c \log g$. 
So we can combine \eqref{eq:low-length} and \eqref{eq:mid-to-high} to get, for any $L> d\sqrt{g} \log g$ and $g\ge g_0$
\begin{align}
    N_{simp}(X,L) = N_{simp}(X,L/2) + N_{simp}(X,[L/2,L]) \le 13\epsilon \cdot e^L/L. \label{eq:high-genus}
\end{align}

It remains to resolve the low genus case $(g\le g_0)$, for which we use a completely different argument (not using the expander assumption at all).  

By \cite{rivin2001}, we have that for any fixed $X$, there exists $C_X$ such that for all $L$,
\begin{align*}
    N_{simp}(X,L) \le C_X \cdot L^{6g-6}.  
\end{align*}
Since an exponential dominates any polynomial, it follows there exists some $L_0=L_0(X,\epsilon)$ such that for all $L\ge L_0$, 
\begin{align}
    N_{simp}(X,L) \le \epsilon \cdot e^L/L.  \label{eq:simp-upper}
\end{align}
We claim that we can take this $L_0$ to be locally constant on each $\M_g$.  Take $L_0$ sufficiently large so that $C_X(2L)^{6g-6} \le \epsilon \cdot e^L/L$ for all $L\ge L_0$.  We can choose a small neighborhood of $X$ in $\M_g$ such that any $X'$ in this neighborhood admits a Bilipschitz map to $X$ with both the map and its inverse having Lipschitz constant at most $2$.  Then for any $L\ge L_0$,
\begin{align*}
    N_{simp}(X',L) \le N_{simp}(X,2L) \le C_X(2L)^{6g-6} \le \epsilon \cdot e^L/L,  
\end{align*}
which shows that $L_0$ can taken to be locally constant. 

Then by the Mumford compactness criterion, %
it follows that there is a single $L_0$ (still depending on $\epsilon$) such that \eqref{eq:simp-upper} holds for any $X$ of genus $\le g_0$ and systole $\ge s_0$.  By enlarging $d$ so that $d\sqrt{2} \log 2 > L_0$, we get that when $L> d\sqrt{g} \log g$, 
\begin{align*}
    N_{simp}(X,L) \le \epsilon \cdot e^L/L, 
\end{align*}
for any $X$ of genus $g \le g_0$ and systole $\ge s_0$ (here we are using that $g\ge 2$, since this is closed hyperbolic surface).  

So combined with \eqref{eq:high-genus} this yields the desired result.

\end{proof}

\begin{proof}[Proof of \Cref{thm:simplicity}]
  We combine \Cref{prop:num-simple} and \Cref{thm:effect-pgt}.  
  
\end{proof}

\section{Filling geodesics}
\label{sec:filling}

\begin{figure}[ht]
  \centering
  \includegraphics[scale=1]{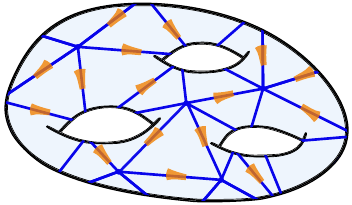}
  \caption{The projection of any closed geodesic $\gamma$ that intersects all the orange flow boxes (which are actually in the unit tangent bundle $T^1X$) has a subset that mimics the blue triangulation. In particular, this subset cuts the surface into topological discs, and hence $\gamma$ is filling.  The blue triangulation is obtained by finding a ``net'' of points that are neither too close together, nor too separated, and then taking the associated Delaunay triangulation.  The resulting triangles have bounded geometry, which means that the orange flow boxes can be taken to be uniformly sized.}
  \label{fig:filling}
\end{figure}

In this section we prove \Cref{thm:filling}.  In \Cref{lem:filling-net}, we construct a set of order $g$ flow boxes (of uniform size) that allow us to detect filling in the sense that a closed geodesic that hits all of them must fill.  In \Cref{lem:meas-avoid-box}, we control the measure of the set of tangent vectors corresponding to geodesics that fail to hit every element among such a set of flow boxes (and that start and end in fixed starting box $B$).  In \Cref{lem:NBL}, we translate this into a bound on the number of \textit{closed geodesics} with the same property.  We combine these pieces in \Cref{lem:non-fill-bd}, an upper bound on non-filling geodesics.  Using this, in \Cref{sec:proof-filling}, we quickly deduce the theorem.

\subsection{Flow boxes to detect filling geodesics}
\label{sec:flow-boxes-filling}

\begin{lemma}
  \label{lem:filling-net}
  Let $s_0>0$. There exists $\eta<s_0/1000$ and $C=C(s_0)$ with the following property.  For $X$ a hyperbolic surface of genus $g$ with systole at least $s_0$, there exist $\eta$ flow boxes $B_1,\ldots, B_{Cg}\subset T^1X$ such that if $\gamma$ is a closed geodesic that intersects every $B_i$, then $\gamma$ is filling.  
\end{lemma}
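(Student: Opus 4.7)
The plan is to construct a graph $K \subset X$ of ``bounded geometry'' with $O(g)$ edges such that $X \setminus K$ is a disjoint union of open disks (triangles), and then place flow boxes along $K$ so that any closed geodesic meeting all of them must contain, as a subset of $X$, a subgraph isotopic in $X$ to $K$.  Filling will then follow by a topological argument identifying each component of the complement of $\gamma$ with a disk.

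First I would take a maximal $r$-separated subset $\{p_1,\ldots,p_N\} \subset X$ for some fixed $r>0$ depending only on the systole lower bound $s_0$ (small enough that balls of radius $r$ are embedded).  Maximality gives that the balls $B(p_i,r)$ cover $X$, and disjointness of the balls $B(p_i,r/2)$ together with $\area(X)=4\pi(g-1)$ yields $N=O(g)$.  The Delaunay triangulation $\mathcal T$ dual to the associated Voronoi decomposition has geodesic edges of length $\Theta(r)$, angles bounded uniformly away from $0$ and $\pi$, and $O(g)$ vertices, edges, and faces; its $1$-skeleton $K$ has complement a disjoint union of hyperbolic triangles that are topological disks.

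Next, along each edge $e$ of $\mathcal T$ I would place a bounded number of $\eta$-flow boxes centered at vectors tangent to $e$ at equally-spaced points of $e$, with $\eta$ a small constant depending only on $s_0$.  The parameter $\eta$ is chosen so that (i) each flow box is embedded in $T^1X$, and (ii) exponential divergence of geodesics in $\H^2$ forces any geodesic meeting all the flow boxes on $e$ to contain an arc $a_e$ lying within Hausdorff distance $\epsilon \ll r$ of $e$ over all of $e$, with endpoints in small balls about the endpoints of $e$.  The total number of flow boxes is $Cg$ for $C=C(s_0)$.  Given a closed geodesic $\gamma$ meeting every flow box, I would then splice the arcs $\{a_e\}\subset \gamma$ together near each vertex $v$ of $\mathcal T$ using short connector segments of $\gamma$ in a ball about $v$, obtaining a subgraph $G\subset \gamma$ of $X$ isotopic in $X$ to $K$.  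Consequently $X\setminus G$ is a disjoint union of open disks; since $G \subset \gamma$, each component of $X\setminus \gamma$ lies in such a disk $D$.  The remaining arcs of $\gamma$ inside $D$ have both endpoints on $\partial D \subset G \subset \gamma$, and within each triangle of $\mathcal T$ these arcs are geodesic segments of length less than the systole, hence simple and pairwise meeting at most once; a standard planar arc-arrangement argument then shows that each complementary region in $D$ is a disk, so $\gamma$ is filling.

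The main difficulty will be the splicing step, namely verifying that the arcs $a_e$ for edges $e$ meeting at a vertex $v$ are genuinely joined by short arcs of $\gamma$ trapped near $v$, so that the graph $G$ is isotopic to $K$ rather than to some topologically different graph (one could imagine pathological scenarios where the arcs $a_e$ approach $v$ but are connected in $\gamma$ only by long excursions through the rest of the surface, producing extra handles).  I expect to handle this by taking the outermost flow box on each edge to lie very close to the vertex $v$, and using the uniform lower bound on interior angles at $v$ (from the bounded geometry of $\mathcal T$) to confine the short pieces of $\gamma$ joining consecutive arcs $a_e$ to a controlled neighborhood of $v$.
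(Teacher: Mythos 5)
Your proposal matches the paper's strategy: construct a bounded-geometry Delaunay triangulation $\mathcal D$ from a net of $O(g)$ points whose spacing depends only on $s_0$, place $\eta$ flow boxes tangent to the edges, and argue that a closed geodesic hitting all of them must contain a subgraph that is a small perturbation of the 1-skeleton of $\mathcal D$, hence is filling. The only substantive differences are that the paper builds the net via the 3-times covering lemma rather than a maximal $r$-separated set, and places a single flow box at each edge midpoint rather than several along the edge (since the edge lengths are $O(1)$ depending only on $s_0$, one box with $\eta$ small enough already pins the geodesic near the whole edge); the splicing difficulty you flag is real, but the paper glosses over the identical point with the bare assertion that $\gamma$ ``will contain a subset whose projection\ldots differs from $\mathcal D$ by a small homeomorphism,'' so this is not a gap specific to your argument.
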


\begin{proof}

  The idea of the proof is illustrated in \Cref{fig:filling}. 
  
  Choose a finite collection of discs $D_i$ of radius $r\le s_0/6$ that cover $X$ (we may need to make $r$ smaller than this, as discussed later in the proof).   
  By the Vitali covering lemma, among these discs, we can find $D_1,\ldots,D_k$ that are \emph{disjoint}, and such that $3D_1,\ldots, 3D_k$ (where $3D_i$ is the disc with the same center as $D_i$, and $3$ times the radius) cover $X$.  Let $p_i$ be the center of $D_i$.  
  Now let $\mathcal D$ be the Delaunay triangulation of $X$ with respect to the set of vertices $\{p_1,\ldots,p_k\}$ (the Delaunay tessellation may have some faces that are not triangles; if so we perturb the points very slightly, and then we will get a triangulation).  From our assumption that $r\le s_0/6$, every region in the Delaunay tesselation is homeomorphic to a disc. 

    \begin{figure}[h]
      \centering
      \includegraphics[scale=1.4]{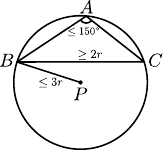}
      \caption{The edge length bounds and Delaunay property give control of angles} 
      \label{fig:Delaunay_triangle}
  \end{figure}

  \begin{claim}
    \label{cla:delaunay}
    Let $r$ be sufficiently small (depending on $s_0$ but not any other features).  Then for any triangle in our $\mathcal D$, the edge lengths lie in $[2r, 6r]$ and the angles lie in  $[4^\circ, 150^\circ]$. 
  \end{claim}

  \renewcommand\qedsymbol{$\square$}
  \begin{proof}
    For  the lower bound on edge lengths, note that since the discs $D_1$,  $\ldots,$ $D_k$ are disjoint, any pair of distinct centers cannot be closer than distance $2r$.  

    For the upper bound on edge lengths, first note that any point in $X$ is at most distance $3r$ from one of $p_1,\ldots,p_k$, since the discs $3D_1,\ldots, 3D_k$ are assumed to cover $X$.  This implies that in the Voronoi tessellation with respect to $\{p_1,\ldots,p_k\}$, any point $x$ in the Voronoi cell $V_i$ containing $p_i$ satisfies $d(x,p_i)\le 3r$.  So if $V_i,V_j$ are adjacent Voronoi cells, then $d(p_i,p_j) \le 6r$.  Since the Delaunay triangulation is the dual of the Voronoi tessellation, all edges in $\mathcal D$ must have length at most $6r$.

    To prove the bounds on angles, consider some triangle $ABC$ in $\mathcal D$; we will first show that $\angle BAC < 150^\circ$.  This triangle has a circumcircle (its center $P$ is at the corresponding Voronoi vertex), and its radius is at most $3r$.  %
    If $\angle BAC \le 90^\circ$, there is nothing to prove, so we will assume that $\angle BAC > 90^\circ$.  If $ABC$ were a Euclidean triangle, we could use the following formula (see \Cref{fig:Delaunay_triangle}):
    
    $$\angle BAC = 180^\circ - 2 \arcsin\left(\frac{|BC|}{2\cdot |PB|}\right)$$
     (note that since $\angle BAC > 90^\circ$, $BC$ must separate the circumcenter $P$ from $A$).  
    Then using that $|BC|\ge 2r$, and $|PB|\le 3r$, the above would give $\angle BAC \le 180^\circ - 2 \arcsin (1/3) < 142^\circ$.  This calculation is for a Euclidean triangle, but since our  hyperbolic triangle is very close to the Euclidean one with the same side lengths (since $r$ is small; make it smaller here if necessary), we get that our actual angle $\angle BAC$ is  less than $150^\circ$.

    We now deduce the lower bound on angles, i.e. $\angle BAC > 4^\circ$.   If the triangle were Euclidean, since we know from above that $\angle BAC<150^\circ$, we could deduce that $\angle B + \angle C > 30^\circ$.  So one of these angles, $\angle B$ without loss of generalization, would satisfy $\angle B > 15^\circ$.  Hence we'd have $\sin B > 0.25$.  By the Law of Sines, and using our upper and lower bounds on edge lengths, it would follow that 
    $$\sin A = \frac{a}{b} \sin B > \frac{1}{3} (0.25) > 0.08.$$
    Hence we'd have $\angle A > \arcsin(0.08) > 4.5^\circ.$  Since our  hyperbolic triangle is very close to the Euclidean one with the same side lengths (since $r$ is small; make it smaller here if necessary), we get that our actual angle $\angle A$ is at least $4^\circ$, as desired.  
  \end{proof}
  \renewcommand\qedsymbol{$\blacksquare$}

  Now we will define the flow boxes $B_1,\ldots,B_{k}$.  For each edge $e_i$ of $\mathcal D$,  let $v_i$ be one of the two vectors tangent to $e_i$ at its midpoint. Then we let $B_i := B(v_i)$ be the $\eta$ flow box centered at $v_i$.
  Using \Cref{cla:delaunay}, we see that if we take $\eta$ sufficiently small (depending only on systole lower bound $s_0$, and not on other features of the surface such as the genus), then any closed geodesic $\gamma$ that intersects all such $B_i$ will contain a subset whose projection to the surface has all complementary regions homeomorphic to discs.  In particular $\gamma$ will be filling.  We can also arrange that $\eta<s_0/1000$. 

  \begin{figure}[ht]
    \centering 
      \includegraphics[scale=1.4]{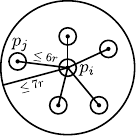}
      \caption{The Delaunay vertices have bounded degree}
      \label{fig:Vertex_degree}             
    \end{figure}

  All that remains is to estimate $k$, the number of the $B_i$.  First, note that $\operatorname{\# vertices}(\mathcal D) \le c\cdot g$, for some $c$ depending only on the systole lower bound $s_0$.  This is because the discs $D_1,\ldots,D_k$, whose centers are exactly the vertices of $\mathcal D$, are disjoint and have radius $r$ (depending only on systole bound $s_0$), while the area of the surface $X$ is $(4g-4)\pi$.  The valence of each vertex $p_i$ in $\mathcal D$ is uniformly bounded by some absolute constant $d$.  In fact, for each vertex $p_j$ that is a neighbor of $p_i$, the disc $D_i$ centered at $p_j$ is entirely contained in the disc of radius $7r$ about $p_i$; since each such $D_i$ has radius $r$ and they are disjoint, there are a bounded number of them.  See \Cref{fig:Vertex_degree}. Hence
  \begin{align*}
    \operatorname{\# edges}(\mathcal D) \le d \cdot \operatorname{\# vertices}(\mathcal D) \le d\cdot c \cdot g.  
  \end{align*}
 Since $k=\operatorname{\# edges}(\mathcal D)$,  taking $C=dc$ gives the desired bound of $Cg$ flow boxes. 
  
\end{proof}

\subsection{Geodesics avoiding flow boxes}
\label{sec:geod-avoid-flow}

Next we get an upper bound on the measure of those $v$ avoiding some box among the $B_1,\ldots,B_m$ at all the prescribed times.   We will use this later with a collection of boxes that has the property that any non-filling closed geodesic must avoid at least one of them.  %

\begin{lemma}
  \label{lem:meas-avoid-box}
   Fix $\delta, s_0, \epsilon>0$ and $\eta<s_0/1000$.  There exists a constant $c$ with the following property.  Let $X$ be a $\delta$-expander surface with systole at least $s_0$, and let $B,B_1,\ldots, B_{m}\subset T^1X$ be $\eta$ flow boxes. Then
  \begin{align*}
    \mu\left(\{ v\in B \cap g_{-t_{k+1}}B: \{g_{t_1}v,\ldots,g_{t_k}v\} \cap B_i = \emptyset \text{ for some } i=1,\ldots,m\}\right)\\
    \le (1+\epsilon) \mu(B)^2 \sum_{i=1}^m \left[1-(1-\epsilon)\mu(B_i)\right]^k ,
  \end{align*}
  for any $t_1<\cdots<t_{k+1}$ with $t_i-t_{i-1}\ge c \log g$ for each $i\ge 2$.
\end{lemma}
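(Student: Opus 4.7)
The plan is to combine a straightforward union bound over $i$ with a single application of \Cref{lem:mix-complement}.

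First, if for some $v$ the trajectory $\{g_{t_1}v,\ldots,g_{t_k}v\}$ misses some $B_i$, then in particular we may specify which one, so the set on the left-hand side is contained in $\bigcup_{i=1}^m A_i$, where
\[
A_i := \{v \in B \cap g_{-t_{k+1}}B :\ g_{t_j}v \notin B_i \text{ for all } j = 1, \ldots, k\}.
\]
It therefore suffices to bound $\sum_{i=1}^m \mu(A_i)$ by the quantity $(1+\epsilon)\mu(B)^2 \sum_i [1-(1-\epsilon)\mu(B_i)]^k$.

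For each fixed $i$, the set $A_i$ is precisely defined by $k+2$ membership/non-membership conditions at the times $0 = s_0 < s_1 < \cdots < s_{k+1}$, where $s_j := t_j$ for $j \ge 1$: namely $g_{s_0}v \in B$, $g_{s_j}v \in T^1X - B_i$ for $j = 1, \ldots, k$, and $g_{s_{k+1}}v \in B$. Pick $\epsilon' \in (0,\epsilon)$ small enough that $(1+\epsilon')^2 \le 1+\epsilon$, and let $c$ be the constant provided by \Cref{lem:mix-complement} for $\epsilon'$, enlarged if necessary so that $s_1 - s_0 = t_1 \ge c\log g$ (a condition automatically met in the application of this lemma inside the proof of \Cref{lem:avoid-box}, where $t_1$ is itself an integer multiple of $c\log g$). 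Then every consecutive gap $s_j - s_{j-1}$ exceeds $c\log g$, so \Cref{lem:mix-complement} applies and gives
\[
\mu(A_i) \;\le\; (1+\epsilon')\mu(B)\cdot [1-(1-\epsilon')\mu(B_i)]^k \cdot (1+\epsilon')\mu(B) \;=\; (1+\epsilon')^2\mu(B)^2\,[1-(1-\epsilon')\mu(B_i)]^k.
\]

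Summing over $i$ and using both $(1+\epsilon')^2 \le 1+\epsilon$ and the monotonicity $[1-(1-\epsilon')\mu(B_i)]^k \le [1-(1-\epsilon)\mu(B_i)]^k$ (valid since $\epsilon' < \epsilon$ makes $1-(1-\epsilon')\mu(B_i) \le 1-(1-\epsilon)\mu(B_i)$) yields the claimed inequality. There is no real obstacle here: the union bound is elementary and \Cref{lem:mix-complement} supplies exactly the required factorization. The only mildly delicate point is the bookkeeping needed to fold the two factors of $(1+\epsilon')$ and the slight shift from $\epsilon'$ to $\epsilon$ in the bracketed term into a single clean $\epsilon$-bound.
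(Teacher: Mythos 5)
Your proof is correct and takes essentially the same approach as the paper's: a union bound over $i$ reduces the claim to a single application of \Cref{lem:mix-complement} to each $S_i$ (your $A_i$). In fact your write-up is slightly more careful than the paper's on two points: the paper's own proof produces $(1+\epsilon)^2$ rather than the stated $(1+\epsilon)$ (a discrepancy you fix with the $\epsilon'$ trick and the monotonicity observation), and you explicitly flag that applying \Cref{lem:mix-complement} to the full chain of $k+2$ conditions at times $0, t_1, \ldots, t_{k+1}$ tacitly requires $t_1 \ge c\log g$, a hypothesis the lemma statement omits but which holds in the only place it is invoked.
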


\begin{proof}
  For $i=1,\ldots,m$, let
  \begin{align*}
    S_i = \{  v\in B \cap g_{-t_{k+1}}B: \{ g_{t_1}v,\ldots,g_{t_k}v \} \cap B_i = \emptyset \} .
  \end{align*}

  We have 
  \begin{align}
    \mu\left( S_i \right) &=\mu\left(\{ v\in B : g_{t_1}v,\ldots,g_{t_k}v \in T^1X-B_i, g_{t_{k+1}}v \in B \}\right) \\
                          & \le (1+\epsilon)^2 \mu(B)^2 \cdot  \left[1-(1-\epsilon)\mu(B_i)\right]^k, \label{eq:Si}
  \end{align}
  by \Cref{lem:mix-complement} (applied with $k+2$ sets; note that $B^+$ and all the $B_i^+$ are embedded since $\eta<s_0/1000$).  

  Then by a union bound 
  \begin{align*}
    \mu \big(\{ v\in B \cap g_{-t_{k+1}}B &: \{g_{t_1}v,\ldots,g_{t_k}v\} \cap B_i = \emptyset \text{ for some } i=1,\ldots,m\}\big) \\
    &= \mu\left(  \bigcup_{i=1}^m S_i\right) \\
    &\le \sum_{i=1}^m \mu(S_i) \\
    &\le \sum_{i=1}^m(1+\epsilon)^2 \mu(B)^2 \cdot  \left[1-(1-\epsilon)\mu(B_i)\right]^k,
  \end{align*}
  where  in the last line we have used %
  \eqref{eq:Si}. %
  
\end{proof}

Recall $N_G(X,L)$, defined in \Cref{sec:count-box-surf} to be the count of closed geodesics of type $G$.   And, given an $\eta$ flow box $B$ in $T^1X$, we denote by $N_G(B,L,\eta)$ the number of geodesic segments of length $\eta$ in $B$ that are part of a closed geodesic of type $G$.  

Given a set of flow boxes $B_1,\ldots,B_n$ we denote by $\mathcal A = \mathcal A (B_1,\ldots,B_n)$ the set of closed geodesics $\gamma$ on $X$ for which there exists some $B_i$ with $\gamma\cap B_i = \emptyset$ ($\mathcal A$ stands for ``avoid").

\begin{lemma}
    \label{lem:NBL}
    Fix $\delta, s_0, \epsilon, C>0$ and $\eta<s_0/1000$.  There exists $c$ with the following property.  Let $X$ be a $\delta$-expander surface with systole at least $s_0$.  Let  $B_1,\ldots, B_{Cg}$ be $\eta$ flow boxes in $T^1X$, and let $B$ be an $\eta/3$ flow box. %
    Then for $\mathcal A = \mathcal A(B_1,\ldots,B_{Cg})$ and any $L > c \cdot g (\log g)^2$, we have
    \begin{align*}
      N_{\mathcal A}(B,L,\eta/3) \le \epsilon \cdot \mu(B)e^L. 
    \end{align*}
\end{lemma}

\begin{proof}
    In preparation, we will study the measure of the set of tangent vectors in $B$ whose trajectories avoid some $B_i$, without the condition of being tangent to an actual closed geodesic.
  
  Let $\mathcal B^-:= \{B_1^-,\ldots, B_{Cg}^-\}$ be contracted versions of our flow boxes.  Let
  $$t_1=c'\log g,  \ t_2=2 c'\log g, \ldots, \ t_k=kc' \log g, \ t_{k+1}=L.$$
  Here we take $c'$ to be the constant given by \Cref{lem:meas-avoid-box} (using our $\delta,s_0,\epsilon$). And take $k=\lceil d g \log g \rceil$, where $d$ is a constant that will be specified later.  
  We want the gap between all successive pairs of times, including $t_{k+1}-t_k$, to be at least $c'\log g$.
 To ensure this, we need
  $$L> (k+1)c'\log g = (d g \log g+1) c' \log g.$$
  Since the right-hand term is $O(g (\log g)^2)$, the above inequality will hold if we take the $c$ in the current lemma sufficiently large.  

    Now take
      \begin{align*}
        S_{\mathcal B^-}:= \{ v\in B \cap g_{-L}B : \{g_tv\}_{0\leq t \leq L} \cap B_i^- = \emptyset \text{ for some } i=1,\ldots,Cg\},
      \end{align*}
    i.e. the set of those tangent vectors that start and end up in $B$, and that avoid some element of $\mathcal B^-$.    We will bound the measure of $S_{\mathcal B^-}$ from above.   
    
  Now with the $t_i$ defined above, we get
  \begin{align*}
        S_{\mathcal B^-} \subset \{ v\in B \cap g_{-L}B: \{g_{t_1}v,\ldots,g_{t_k}v\} \cap B_i^- = \emptyset \text{ for some } i=1,\ldots,Cg\}. 
  \end{align*}
To bound the measure of the right-hand side above, we use  \Cref{lem:meas-avoid-box} with flow boxes $B,B_1^-,\ldots,B_{Cg}^-$, giving
  \begin{align}
    \mu(S_{\mathcal B^-})  & \le   (1+\epsilon) \mu(B)^2 \sum_{i=1}^{Cg} \left[1-(1-\epsilon)\mu(B_i)\right]^k \\
                           & = (1+\epsilon) \mu(B)^2 (Cg) \left[1-(1-\epsilon)\mu(B_i)\right]^k \\
                           & \le (1+\epsilon) \mu(B)^2 (Cg) \left[1-\frac{\Omega(1)}{g} \right]^k, \label{eq:SB}
  \end{align}
  where the implicit constant in $\Omega()$ is absolute.  
  Now we use the approximation
  $$g \left(1-\frac{\Omega(1)}{g} \right)^k \le  O(1) g \exp\left(-\frac{\Omega(1)}{g}\right)^k \le O(1) g \exp\left(\frac{-k\Omega(1)}{g}\right),$$
  where all the implicit constants are absolute.  %
  Now is when we choose $d$ -- noting that $k \ge d g \log g$, we take $d$ large enough that the right hand side of the above is smaller than $\frac{\epsilon}{(1+\epsilon)C}$.   Applying this to \eqref{eq:SB} we get
  \begin{align}
    \mu(S_{\mathcal B^-}) \le \epsilon \cdot \mu(B)^2. \label{eq:SBbound}
  \end{align}

  \begin{claim}
    \label{cla:comp-frac}
   Let $\kappa$ be the fraction of the components of $B\cap g_{-L}B$ that are completely contained in $S_{\mathcal B^-}$.  Then $\kappa = O(\sqrt{\epsilon}).$
 \end{claim}

   \renewcommand\qedsymbol{$\square$}
  \begin{proof} 
    Let $N= \comp(B\cap g_{-L}B)$, and let $P_1,\ldots, P_N$ be the components of $B\cap g_{-L}B$, ordered such that $\mu(P_1) \le \cdots \le \mu(P_N).$  Then
    \begin{align*}
      \mu(S_{\mathcal B^-}) \ge \sum_{i< \kappa N} \mu(P_i) \ge \sum_{(\kappa/2)N < i < \kappa N} \mu(P_i) \ge \mu(P_{\lfloor (\kappa/2)N\rfloor}) \cdot (\kappa/2)N.
    \end{align*}
Note that $N=\Omega(\mu(B) e^L)$; see \eqref{eq:num-comps}. 
    Using this in the above, together with \eqref{eq:SBbound}, we get 
    \begin{align}
       \mu(P_{\lfloor (\kappa/2)N\rfloor})  &\le \mu(S_{\mathcal B^-})\cdot \frac{2}{\kappa \cdot N} \le \epsilon \cdot \mu(B)^2 \cdot \frac{2}{\kappa \cdot \Omega(\mu(B)e^L)} \\
       &\le  O\left((\epsilon/\kappa) \cdot e^{-L} \mu(B)\right). \label{eq:P-upper}
    \end{align}
On the other hand, as in the proof of \Cref{lem:ave-flow-width}
we see that the components of $B\cap g_{-L}B$ have widths in the flow direction that are close to equidistributed in $[0,\eta/3]$.  In particular the width of $P_{\lfloor (\kappa/2)N\rfloor}$ in the flow direction is $\Omega(\kappa \eta)$.  On the other hand, this set has full width in the contracting direction, and width in the expanding direction $\Omega(e^{-L} \eta)$ (here we are ignoring ``edge effects", but these can be dealt with by using contracted/expanded flow boxes, as in \Cref{lem:effect-pgt-half}).  Combining these estimates gives
\begin{align*}
  \mu(P_{\lfloor (\kappa/2)N\rfloor})  \ge   \Omega \left( \kappa  e^{-L}\mu(B) \right).%
\end{align*}
Combining this with the upper bound \eqref{eq:P-upper}, we then see that $\kappa = O(\sqrt \epsilon)$, as desired.  

\end{proof}
\renewcommand\qedsymbol{$\blacksquare$}

  Observe that if $v \in B\cap g_{-L}B$ and $v\not\in S_{\mathcal B^-}$, then for any $w$ in $v$'s connected component of $B\cap g_{-L}B$, we have $w\not\in S_{\mathcal B}$.  This is because all vectors in the component travel closely together up to time $L$, and since $\{g_tv\}$ hits every $B_i^-$, we see that $\{g_tw\}$ will hit every $B_i$.

Thus any segment counted by $N_{\mathcal A}(B,L,\eta/3)$ intersects a component of $B\cap g_{-L}B$ that is entirely contained in  $S_{\mathcal B^-}$; let $K$ be the number of such components. It follows from \eqref{eq:num-comps} that 
$$\comp(B\cap g_{-L}B) = O(\mu(B) e^L).$$
And then from \Cref{cla:comp-frac} that 
$$K\le O(\sqrt{\epsilon}) \cdot \comp(B\cap g_{-L}B) \le O\left( \sqrt{\epsilon} \cdot \mu(B)e^L \right).$$

Now \Cref{lem:closing-unique} implies that each component of $B\cap g_{-L}B$ intersects at most one $\eta/3$ segment of a closed geodesic with length in $[L-\eta/3,L+\eta/3]$.  So
\begin{align*}
  N_{\mathcal A}(B,L,\eta/3) & \le K \le O\left( \sqrt{\epsilon} \cdot \mu(B)e^L \right).
\end{align*}
Choosing $\epsilon$ appropriately gives the desired result.  

\end{proof}

\begin{lemma}
  \label{lem:non-fill-bd}
  Fix $\delta, s_0, \epsilon>0$.  There exists $c$ with the following property.  Let $X$ be a $\delta$-expander surface with systole at least $s_0$.  Let  $NF$ be the set of closed geodesics on $X$ that are non-filling.  
  Then for any $L > c \cdot g (\log g)^2$, we have
  \begin{align*}
   N_{NF} (X, L)\le \epsilon \cdot e^L/L .
  \end{align*}  
\end{lemma}

\begin{proof}
Let $\eta<s_0/1000$ be given by \Cref{lem:filling-net}, and let $B_1,\ldots,B_{Cg}$ be the $\eta$ flow boxes in $T^1X$ given by that lemma.  We get that $NF$ is a subset of $\mathcal A=\mathcal A(B_1,\ldots,B_{Cg})$, and hence for any $L$,
\begin{align}
    N_{NF}(X,L) \le N_{\mathcal A}(X, L). \label{eq:NF-A}
\end{align}

We then note that \Cref{lem:NBL} gives a $c$ such that for $B$ any $\eta/3$ flow box and the $B_1,\ldots,B_{Cg}$ from above, we have
\[
N_{\mathcal A}(B,L,\eta/3) \le \epsilon \cdot  \mu(B) e^L,
\]
for any $L > c \cdot g (\log g)^2$.  
This allows us to apply \Cref{prop:fb-to-total}, which then gives that 
\begin{align}
    N_{\mathcal A}(X,[L/2,L]) \le 12 \epsilon \cdot  e^L/L, \label{eq:Alarge}
\end{align}
for any $L>2 c\cdot g (\log g)^2$.

We now deal with geodesics of length $\le L/2$.  By \Cref{thm:effect-pgt} (with $\epsilon=1$), for large $c$ and $L>c \log g$, we have
\begin{align*}
     N(X,L/2) \le 2 \cdot \frac{e^{L/2}}{L/2} =\frac{4}{e^{L/2}}\cdot \frac{e^L}{L}. 
\end{align*}
By taking $c$ sufficiently large (depending only on $\delta,s_0,\epsilon$), we can ensure that the term $\frac{4}{e^{L/2}}$ in the above is less than $\epsilon$ when $L>c \log g$, and so we m get 
\begin{align}
     N_\mathcal A (X,L/2) < \epsilon \cdot e^L/L. \label{eq:Asmall}
\end{align}

Summing \eqref{eq:Alarge} and \eqref{eq:Asmall} gives
\begin{align*}
    N_\mathcal A(X,L) \le 13 \epsilon \cdot e^L/L.
\end{align*}

Then combining the above with \eqref{eq:NF-A} yields the desired conclusion.%

\end{proof}

\subsection{Completing proof of \Cref{thm:filling}}
\label{sec:proof-filling}

\begin{proof}[Proof of \Cref{thm:filling}]
We combine \Cref{lem:non-fill-bd}, which bounds from above the number of non-filling geodesics, with the lower bound on number of all closed geodesics $N(X,L)$ given by \Cref{thm:effect-pgt} to conclude the desired result.
\end{proof}

\bibliographystyle{amsalpha}
  \bibliography{sources}

@article {bs85,
    AUTHOR = {Birman, Joan S. and Series, Caroline},
     TITLE = {Geodesics with bounded intersection number on surfaces are
              sparsely distributed},
   JOURNAL = {Topology},
  FJOURNAL = {Topology. An International Journal of Mathematics},
    VOLUME = {24},
      YEAR = {1985},
    NUMBER = {2},
     PAGES = {217--225},
      ISSN = {0040-9383},
   MRCLASS = {57N05},
  MRNUMBER = {793185},
MRREVIEWER = {Michel\ Boileau},
       DOI = {10.1016/0040-9383(85)90056-4},
       URL = {https://doi.org/10.1016/0040-9383(85)90056-4},
}

@ARTICLE{hmt25,
       author = {{Hide}, Will and {Macera}, Davide and {Thomas}, Joe},
        title = "{Spectral gap with polynomial rate for random covering surfaces}",
      journal = {arXiv e-prints},
     keywords = {Spectral Theory, Differential Geometry, Operator Algebras, Probability, 58J50},
         year = 2025,
        month = may,
          eid = {arXiv:2505.08479},
        pages = {arXiv:2505.08479},
          doi = {10.48550/arXiv.2505.08479},
archivePrefix = {arXiv},
       eprint = {2505.08479},
 primaryClass = {math.SP},
       adsurl = {https://ui.adsabs.harvard.edu/abs/2025arXiv250508479H},
      adsnote = {Provided by the SAO/NASA Astrophysics Data System}
}

@ARTICLE{sw25,
       author = {{Shen}, Yang and {Wu}, Yunhui},
        title = "{Nearly optimal spectral gaps for random Belyi surfaces}",
      journal = {arXiv e-prints},
     keywords = {Spectral Theory, Differential Geometry, Geometric Topology, Representation Theory},
         year = 2025,
        month = nov,
          eid = {arXiv:2511.02517},
        pages = {arXiv:2511.02517},
          doi = {10.48550/arXiv.2511.02517},
archivePrefix = {arXiv},
       eprint = {2511.02517},
 primaryClass = {math.SP},
       adsurl = {https://ui.adsabs.harvard.edu/abs/2025arXiv251102517S},
      adsnote = {Provided by the SAO/NASA Astrophysics Data System}
}

@ARTICLE{mpv25,
       author = {{Magee}, Michael and {Puder}, Doron and {van Handel}, Ramon},
        title = "{Strong convergence of uniformly random permutation representations of surface groups}",
      journal = {arXiv e-prints},
     keywords = {Geometric Topology, Group Theory, Operator Algebras, Probability, Spectral Theory, 58J50, 57M10, 20F65, 60B20, 15B52, 46L53 (Primary) 20P05, 20C30, 20B30 (Secondary)},
         year = 2025,
        month = apr,
          eid = {arXiv:2504.08988},
        pages = {arXiv:2504.08988},
          doi = {10.48550/arXiv.2504.08988},
archivePrefix = {arXiv},
       eprint = {2504.08988},
 primaryClass = {math.GT},
       adsurl = {https://ui.adsabs.harvard.edu/abs/2025arXiv250408988M},
      adsnote = {Provided by the SAO/NASA Astrophysics Data System}
}

@ARTICLE{am25,
       author = {{Anantharaman}, Nalini and {Monk}, Laura},
        title = "{Friedman-Ramanujan functions in random hyperbolic geometry and application to spectral gaps II}",
      journal = {arXiv e-prints},
     keywords = {Metric Geometry, Spectral Theory, Primary 58J50, 32G15, Secondary 05C80, 11F72},
         year = 2025,
        month = feb,
          eid = {arXiv:2502.12268},
        pages = {arXiv:2502.12268},
          doi = {10.48550/arXiv.2502.12268},
archivePrefix = {arXiv},
       eprint = {2502.12268},
 primaryClass = {math.MG},
       adsurl = {https://ui.adsabs.harvard.edu/abs/2025arXiv250212268A},
      adsnote = {Provided by the SAO/NASA Astrophysics Data System}
}

@article {rivin2001,
    AUTHOR = {Rivin, Igor},
     TITLE = {Simple curves on surfaces},
   JOURNAL = {Geom. Dedicata},
  FJOURNAL = {Geometriae Dedicata},
    VOLUME = {87},
      YEAR = {2001},
    NUMBER = {1-3},
     PAGES = {345--360},
      ISSN = {0046-5755},
   MRCLASS = {57M50 (11J06 57N05)},
  MRNUMBER = {1866856},
MRREVIEWER = {J. S. Birman},
       DOI = {10.1023/A:1012010721583},
       URL = {https://doi.org/10.1023/A:1012010721583},
}

@article {mirzakhani2008,
    AUTHOR = {Mirzakhani, Maryam},
     TITLE = {Growth of the number of simple closed geodesics on hyperbolic
              surfaces},
   JOURNAL = {Ann. of Math. (2)},
  FJOURNAL = {Annals of Mathematics. Second Series},
    VOLUME = {168},
      YEAR = {2008},
    NUMBER = {1},
     PAGES = {97--125},
      ISSN = {0003-486X},
   MRCLASS = {32G15},
  MRNUMBER = {2415399},
MRREVIEWER = {Hsian-Hua Tseng},
       DOI = {10.4007/annals.2008.168.97},
       URL = {https://doi.org/10.4007/annals.2008.168.97},
}

@article {bm2004,
    AUTHOR = {Brooks, Robert and Makover, Eran},
     TITLE = {Random construction of {R}iemann surfaces},
   JOURNAL = {J. Differential Geom.},
  FJOURNAL = {Journal of Differential Geometry},
    VOLUME = {68},
      YEAR = {2004},
    NUMBER = {1},
     PAGES = {121--157},
      ISSN = {0022-040X},
   MRCLASS = {57M50 (58J50)},
  MRNUMBER = {2152911},
MRREVIEWER = {Igor Rivin},
       URL = {http://projecteuclid.org/euclid.jdg/1102536712},
}

@article {gamburd2006,
    AUTHOR = {Gamburd, Alex},
     TITLE = {Poisson-{D}irichlet distribution for random {B}elyi surfaces},
   JOURNAL = {Ann. Probab.},
  FJOURNAL = {The Annals of Probability},
    VOLUME = {34},
      YEAR = {2006},
    NUMBER = {5},
     PAGES = {1827--1848},
      ISSN = {0091-1798},
   MRCLASS = {60C05 (60D05)},
  MRNUMBER = {2271484},
MRREVIEWER = {Dimitri Petritis},
       DOI = {10.1214/009117906000000223},
       URL = {https://doi.org/10.1214/009117906000000223},
}

@book {hk1995,
    AUTHOR = {Katok, Anatole and Hasselblatt, Boris},
     TITLE = {Introduction to the modern theory of dynamical systems},
    SERIES = {Encyclopedia of Mathematics and its Applications},
    VOLUME = {54},
      NOTE = {With a supplementary chapter by Katok and Leonardo Mendoza},
 PUBLISHER = {Cambridge University Press, Cambridge},
      YEAR = {1995},
     PAGES = {xviii+802},
      ISBN = {0-521-34187-6},
   MRCLASS = {58Fxx (34Cxx 34Dxx 58-01 58F11 58F15)},
  MRNUMBER = {1326374},
MRREVIEWER = {Edoh Amiran},
       DOI = {10.1017/CBO9780511809187},
       URL = {https://doi.org/10.1017/CBO9780511809187},
}

@book {fh2019,
    AUTHOR = {Fisher, Todd and Hasselblatt, Boris},
     TITLE = {Hyperbolic flows},
    SERIES = {Zurich Lectures in Advanced Mathematics},
 PUBLISHER = {EMS Publishing House, Berlin},
      YEAR = {[2019] \copyright 2019},
     PAGES = {xiv+723},
      ISBN = {978-3-03719-200-9},
   MRCLASS = {37-02 (37A30 37A35 37D20 37D40)},
  MRNUMBER = {3972204},
MRREVIEWER = {Miguel Paternain},
       DOI = {10.4171/200},
       URL = {https://doi.org/10.4171/200},
}

@article {mnp2022,
    AUTHOR = {Magee, Michael and Naud, Fr\'{e}d\'{e}ric and Puder, Doron},
     TITLE = {A random cover of a compact hyperbolic surface has relative
              spectral gap {$\frac{3}{16}-\varepsilon$}},
   JOURNAL = {Geom. Funct. Anal.},
  FJOURNAL = {Geometric and Functional Analysis},
    VOLUME = {32},
      YEAR = {2022},
    NUMBER = {3},
     PAGES = {595--661},
      ISSN = {1016-443X},
   MRCLASS = {58J50 (05C50 32G15)},
  MRNUMBER = {4431124},
MRREVIEWER = {Sugata Mondal},
       DOI = {10.1007/s00039-022-00602-x},
       URL = {https://doi.org/10.1007/s00039-022-00602-x},
}

@book {Margulis_thesis,
    AUTHOR = {Margulis, Grigoriy A.},
      YEAR = {1970},
     TITLE = {On some aspects of the theory of {A}nosov systems},
      NOTE = {PhD thesis, Moscow State University (in Russian; translation published in 2003)},
     PUBLISHER = { Springer-Verlag},
}

@article {matheus2013,
    AUTHOR = {Matheus, Carlos},
     TITLE = {Some quantitative versions of {R}atner's mixing estimates},
   JOURNAL = {Bull. Braz. Math. Soc. (N.S.)},
  FJOURNAL = {Bulletin of the Brazilian Mathematical Society. New Series.
              Boletim da Sociedade Brasileira de Matem\'{a}tica},
    VOLUME = {44},
      YEAR = {2013},
    NUMBER = {3},
     PAGES = {469--488},
      ISSN = {1678-7544},
   MRCLASS = {37D40 (37A25)},
  MRNUMBER = {3124746},
       DOI = {10.1007/s00574-013-0022-x},
       URL = {https://doi.org/10.1007/s00574-013-0022-x},
}

@article {mirzakhani2013,
    AUTHOR = {Mirzakhani, Maryam},
     TITLE = {Growth of {W}eil-{P}etersson volumes and random hyperbolic
              surfaces of large genus},
   JOURNAL = {J. Differential Geom.},
  FJOURNAL = {Journal of Differential Geometry},
    VOLUME = {94},
      YEAR = {2013},
    NUMBER = {2},
     PAGES = {267--300},
      ISSN = {0022-040X},
   MRCLASS = {32G15 (53C22 60B05)},
  MRNUMBER = {3080483},
MRREVIEWER = {Zongliang Sun},
       URL = {http://projecteuclid.org/euclid.jdg/1367438650},
}

@article {ratner1987,
    AUTHOR = {Ratner, Marina},
     TITLE = {The rate of mixing for geodesic and horocycle flows},
   JOURNAL = {Ergodic Theory Dynam. Systems},
  FJOURNAL = {Ergodic Theory and Dynamical Systems},
    VOLUME = {7},
      YEAR = {1987},
    NUMBER = {2},
     PAGES = {267--288},
      ISSN = {0143-3857},
   MRCLASS = {58F17 (22E40)},
  MRNUMBER = {896798},
MRREVIEWER = {M. Rees},
       DOI = {10.1017/S0143385700004004},
       URL = {https://doi.org/10.1017/S0143385700004004},
}

@ARTICLE{wx2022,
       author = {{Wu}, Yunhui and {Xue}, Yuhao},
        title = "{Prime geodesic theorem and closed geodesics for large genus}",
      journal = {arXiv e-prints},
     keywords = {Mathematics - Geometric Topology, Mathematics - Differential Geometry, Mathematics - Number Theory, Mathematics - Spectral Theory},
         year = 2022,
        month = sep,
          eid = {arXiv:2209.10415},
        pages = {arXiv:2209.10415},
          doi = {10.48550/arXiv.2209.10415},
archivePrefix = {arXiv},
       eprint = {2209.10415},
 primaryClass = {math.GT},
       adsurl = {https://ui.adsabs.harvard.edu/abs/2022arXiv220910415W},
      adsnote = {Provided by the SAO/NASA Astrophysics Data System}
}

@ARTICLE{lw2021,
       author = {{Lipnowski}, Michael and {Wright}, Alex},
        title = "{Towards optimal spectral gaps in large genus}",
      journal = {arXiv e-prints},
     keywords = {Mathematics - Geometric Topology, Mathematics - Number Theory, Mathematics - Probability, Mathematics - Spectral Theory},
         year = 2021,
        month = mar,
          eid = {arXiv:2103.07496},
        pages = {arXiv:2103.07496},
          doi = {10.48550/arXiv.2103.07496},
archivePrefix = {arXiv},
       eprint = {2103.07496},
 primaryClass = {math.GT},
       adsurl = {https://ui.adsabs.harvard.edu/abs/2021arXiv210307496L},
      adsnote = {Provided by the SAO/NASA Astrophysics Data System}
}

@ARTICLE{ds2022,
       author = {{Dozier}, Benjamin and {Sapir}, Jenya},
        title = "{Simple vs non-simple loops on random regular graphs}",
      journal = {arXiv e-prints},
     keywords = {Mathematics - Combinatorics, Mathematics - Dynamical Systems, Mathematics - Probability},
         year = 2022,
        month = sep,
          eid = {arXiv:2209.11218},
        pages = {arXiv:2209.11218},
          doi = {10.48550/arXiv.2209.11218},
archivePrefix = {arXiv},
       eprint = {2209.11218},
 primaryClass = {math.CO},
       adsurl = {https://ui.adsabs.harvard.edu/abs/2022arXiv220911218D},
      adsnote = {Provided by the SAO/NASA Astrophysics Data System}
}

@ARTICLE{mw2021,
       author = {{Hide}, Will and {Magee}, Michael},
        title = "{Near optimal spectral gaps for hyperbolic surfaces}",
      journal = {arXiv e-prints},
     keywords = {Mathematics - Spectral Theory, Mathematics - Analysis of PDEs, Mathematics - Differential Geometry, Mathematics - Probability, 58J50, 05C80, 05C50},
         year = 2021,
        month = jul,
          eid = {arXiv:2107.05292},
        pages = {arXiv:2107.05292},
          doi = {10.48550/arXiv.2107.05292},
archivePrefix = {arXiv},
       eprint = {2107.05292},
 primaryClass = {math.SP},
       adsurl = {https://ui.adsabs.harvard.edu/abs/2021arXiv210705292H},
      adsnote = {Provided by the SAO/NASA Astrophysics Data System}
}

@ARTICLE{wx2021,
       author = {{Wu}, Yunhui and {Xue}, Yuhao},
        title = "{Random hyperbolic surfaces of large genus have first eigenvalues greater than $\frac{3}{16}-\epsilon$}",
      journal = {arXiv e-prints},
     keywords = {Mathematics - Differential Geometry, Mathematics - Geometric Topology, Mathematics - Probability, Mathematics - Spectral Theory},
         year = 2021,
        month = feb,
          eid = {arXiv:2102.05581},
        pages = {arXiv:2102.05581},
          doi = {10.48550/arXiv.2102.05581},
archivePrefix = {arXiv},
       eprint = {2102.05581},
 primaryClass = {math.DG},
       adsurl = {https://ui.adsabs.harvard.edu/abs/2021arXiv210205581W},
      adsnote = {Provided by the SAO/NASA Astrophysics Data System}
}

@article {cf2005,
    AUTHOR = {Cooper, Colin and Frieze, Alan},
     TITLE = {The cover time of random regular graphs},
   JOURNAL = {SIAM J. Discrete Math.},
  FJOURNAL = {SIAM Journal on Discrete Mathematics},
    VOLUME = {18},
      YEAR = {2005},
    NUMBER = {4},
     PAGES = {728--740},
      ISSN = {0895-4801},
   MRCLASS = {05C80 (60C05)},
  MRNUMBER = {2157821},
MRREVIEWER = {Hai Yan Chen},
       DOI = {10.1137/S0895480103428478},
       URL = {https://doi.org/10.1137/S0895480103428478},
}

@book {buser2010,
    AUTHOR = {Buser, Peter},
     TITLE = {Geometry and spectra of compact {R}iemann surfaces},
    SERIES = {Modern Birkh\"{a}user Classics},
      NOTE = {Reprint of the 1992 edition},
 PUBLISHER = {Birkh\"{a}user Boston, Ltd., Boston, MA},
      YEAR = {2010},
     PAGES = {xvi+454},
      ISBN = {978-0-8176-4991-3},
   MRCLASS = {58J50 (30F10 32G15 58J53)},
  MRNUMBER = {2742784},
       DOI = {10.1007/978-0-8176-4992-0},
       URL = {https://doi.org/10.1007/978-0-8176-4992-0},
}

@ARTICLE{am2023,
       author = {{Anantharaman}, Nalini and {Monk}, Laura},
        title = "{Friedman-Ramanujan functions in random hyperbolic geometry and application to spectral gaps}",
      journal = {arXiv e-prints},
     keywords = {Mathematics - Spectral Theory, Mathematics - Geometric Topology, 58J50, 32G15 (Primary) 05C80, 11F72 (Secondary)},
         year = 2023,
        month = apr,
          eid = {arXiv:2304.02678},
        pages = {arXiv:2304.02678},
          doi = {10.48550/arXiv.2304.02678},
archivePrefix = {arXiv},
       eprint = {2304.02678},
 primaryClass = {math.SP},
       adsurl = {https://ui.adsabs.harvard.edu/abs/2023arXiv230402678A},
      adsnote = {Provided by the SAO/NASA Astrophysics Data System}
}

@article {gpy2011,
    AUTHOR = {Guth, Larry and Parlier, Hugo and Young, Robert},
     TITLE = {Pants decompositions of random surfaces},
   JOURNAL = {Geom. Funct. Anal.},
  FJOURNAL = {Geometric and Functional Analysis},
    VOLUME = {21},
      YEAR = {2011},
    NUMBER = {5},
     PAGES = {1069--1090},
      ISSN = {1016-443X},
   MRCLASS = {32G15 (57M50 60C05)},
  MRNUMBER = {2846383},
MRREVIEWER = {Lee-Peng Teo},
       DOI = {10.1007/s00039-011-0131-x},
       URL = {https://doi.org/10.1007/s00039-011-0131-x},
}

@article {mp2019,
    AUTHOR = {Mirzakhani, Maryam and Petri, Bram},
     TITLE = {Lengths of closed geodesics on random surfaces of large genus},
   JOURNAL = {Comment. Math. Helv.},
  FJOURNAL = {Commentarii Mathematici Helvetici. A Journal of the Swiss
              Mathematical Society},
    VOLUME = {94},
      YEAR = {2019},
    NUMBER = {4},
     PAGES = {869--889},
      ISSN = {0010-2571},
   MRCLASS = {57M50 (32G15)},
  MRNUMBER = {4046008},
MRREVIEWER = {Jayadev S. Athreya},
       DOI = {10.4171/cmh/477},
       URL = {https://doi.org/10.4171/cmh/477},
}

@article {mt2022,
    AUTHOR = {Monk, Laura and Thomas, Joe},
     TITLE = {The tangle-free hypothesis on random hyperbolic surfaces},
   JOURNAL = {Int. Math. Res. Not. IMRN},
  FJOURNAL = {International Mathematics Research Notices. IMRN},
      YEAR = {2022},
    NUMBER = {22},
     PAGES = {18154--18185},
      ISSN = {1073-7928},
   MRCLASS = {57K32 (60B05)},
  MRNUMBER = {4514465},
       DOI = {10.1093/imrn/rnab160},
       URL = {https://doi.org/10.1093/imrn/rnab160},
}

@article {nwx2023,
    AUTHOR = {Nie, Xin and Wu, Yunhui and Xue, Yuhao},
     TITLE = {Large genus asymptotics for lengths of separating closed
              geodesics on random surfaces},
   JOURNAL = {J. Topol.},
  FJOURNAL = {Journal of Topology},
    VOLUME = {16},
      YEAR = {2023},
    NUMBER = {1},
     PAGES = {106--175},
      ISSN = {1753-8416},
   MRCLASS = {57M50 (32G15 57K20)},
  MRNUMBER = {4532491},
       DOI = {10.1112/topo.12276},
       URL = {https://doi.org/10.1112/topo.12276},
}

@article {buser1978,
    AUTHOR = {Buser, Peter},
     TITLE = {Cubic graphs and the first eigenvalue of a {R}iemann surface},
   JOURNAL = {Math. Z.},
  FJOURNAL = {Mathematische Zeitschrift},
    VOLUME = {162},
      YEAR = {1978},
    NUMBER = {1},
     PAGES = {87--99},
      ISSN = {0025-5874},
   MRCLASS = {58G25 (30F10 53C20)},
  MRNUMBER = {505920},
MRREVIEWER = {J\'{o}zef Dodziuk},
       DOI = {10.1007/BF01437826},
       URL = {https://doi.org/10.1007/BF01437826},
}

@article {bk1989,
    AUTHOR = {Broder, Andrei Z. and Karlin, Anna R.},
     TITLE = {Bounds on the cover time},
   JOURNAL = {J. Theoret. Probab.},
  FJOURNAL = {Journal of Theoretical Probability},
    VOLUME = {2},
      YEAR = {1989},
    NUMBER = {1},
     PAGES = {101--120},
      ISSN = {0894-9840},
   MRCLASS = {60J10 (05C99)},
  MRNUMBER = {981768},
MRREVIEWER = {David J. Aldous},
       DOI = {10.1007/BF01048273},
       URL = {https://doi.org/10.1007/BF01048273},
}

\end{document}